\definecolor{green}{RGB}{0,144,0}
\definecolor{bluegreen}{RGB}{17,100,180}
\numberwithin{equation}{section}
\numberwithin{figure}{section}
\numberwithin{table}{section}
\newcommand\blfootnote[1]{%
  \begingroup
  \renewcommand\thefootnote{}\footnote{#1}%
  \addtocounter{footnote}{-1}%
  \endgroup
}
\newtheorem{theorem}{Theorem}[section]
\newtheorem*{theorem*}{Theorem}
\newtheorem{lemma}[theorem]{Lemma}
\newtheorem{proposition}[theorem]{Proposition}
\newcommand{\teichmuller}{Teichm{\"u}ller{ }}
\newcommand{\calH}{\mathcal{H}}
\newcommand{\hyp}{\mathcal{H}^{hyp}}
\newcommand{\odd}{\mathcal{H}^{odd}}
\newcommand{\even}{\mathcal{H}^{even}}
\newcommand{\non}{\mathcal{H}^{nonhyp}}
\newcommand{\T}{\mathcal{T}}
\newcommand{\C}{\mathcal{C}}
\newcommand{\D}{\mathcal{D}}
\newcommand{\Z}{\mathbb{Z}}
\newcommand{\Cbb}{\mathbb{C}}
\newcommand{\R}{\mathbb{R}}
\newcommand{\Hbb}{\mathbb{H}}
\newcommand{\sys}{\text{sys}}
\newcommand{\Mod}{\text{Mod}}
\newcommand{\SL}{\text{SL}}
\newcommand{\SO}{\text{SO}}
\newcommand{\GL}{\text{GL}}
\newcommand{\ind}{\text{ind}}
\newcommand{\bs}{\boldsymbol}
\begin{document}

\title[Single-cylinder square-tiled surfaces and ratio-optimising pseudo-Anosovs]{Single-cylinder square-tiled surfaces and the ubiquity of ratio-optimising pseudo-Anosovs}

\author[Jeffreys]{Luke Jeffreys}
\address{\hskip-\parindent
     School of Mathematics \\
     University of Bristol \\
     Fry Building \\
	Woodland Road \\
	Bristol BS8 1UG }
\email{luke.jeffreys@bristol.ac.uk}


\begin{abstract}
In every connected component of every stratum of Abelian differentials, we construct square-tiled surfaces with one vertical and one horizontal cylinder. We show that for all but the hyperelliptic components this can be achieved in the minimum number of squares necessary for a square-tiled surface in that stratum. For the hyperelliptic components, we show that the number of squares required is strictly greater and construct surfaces realising these bounds. Using these surfaces, we demonstrate that pseudo-Anosov homeomorphisms optimising the ratio of Teichm{\"u}ller to curve graph translation length are, in a reasonable sense, ubiquitous in the connected components of strata of Abelian differentials. Finally, we present a further application to filling pairs on punctured surfaces by constructing filling pairs whose algebraic and geometric intersection numbers are equal.
\end{abstract}


\maketitle


\section{Introduction}

\blfootnote{MSC Classification - Primary: 32G15, 30F30, 30F60. Secondary: 57M50. Keywords: Abelian differentials, square-tiled surfaces, pseudo-Anosov homeomorphisms}
Let $S$ be a closed, connected, oriented surface of genus $g\geq{2}$, and let $\calH$ be the moduli space of Abelian differentials on $S$; that is, the moduli space of pairs $(S,\omega)$ where $S$ is a closed, connected, Riemann surface of genus $g$ and $\omega$ is a non-zero holomorphic 1-form on $S$. It can be seen that $\calH$ is a complex algebraic orbifold of dimension $4g-3$. We stratify $\calH$ by the orders of the zeros of the Abelian differential. That is, the stratum $\calH(k_{1},\ldots,k_{n})$, with $k_{i}\geq{1}$ and $\Sigma_{i=1}^{n}k_{i}=2g-2$, is the subset of $\calH$ consisting of Abelian differentials with $n$ zeros of orders $k_{1},\ldots,k_{n}$. Each stratum $\calH(k_{1},\ldots,k_{n})$ is a complex algebraic orbifold of dimension $2g+n-1$.

Masur \cite{Ma} and Veech \cite{V1} proved independently that the \teichmuller geodesic flow acts ergodically on each connected component of each stratum of the moduli space of unit area quadratic differentials on a surface. For the strata of quadratic differentials that are squares of Abelian differentials, these ergodic components are given by the connected components of the strata of the moduli space of Abelian differentials. Kontsevich-Zorich determined the number of connected components of each stratum of Abelian differentials showing, using the properties of hyperellipticity and spin parity, that a stratum can have up to three connected components {\cite[Theorems 1 and 2]{KZ}}. 

A connected component is said to be hyperelliptic if it consists entirely of hyperelliptic Abelian differentials. Similarly, a connected component is said to be even, respectively odd, if it consists entirely of Abelian differentials with even, respectively odd, spin parity. The maximum of three connected components is realised by strata having a hyperelliptic component, an even component, and an odd component. The hyperelliptic components in the strata $\calH(2g-2)$ and $\calH(g-1,g-1)$ are denoted by $\hyp(2g-2)$ and $\hyp(g-1,g-1)$, respectively. In fact, these are the only strata that have hyperelliptic components.

\subsection*{Square-tiled surfaces} A {\em square-tiled surface} in a stratum of $\calH$ is an Abelian differential given by a branched cover of the square torus, and one can think of square-tiled surfaces as being the integral points of the period coordinates on that stratum. As such, understanding square-tiled surfaces has played a crucial role in calculating the volumes of strata of Abelian differentials. See, for example, the works of Zorich \cite{Z2} and Eskin-Okounkov \cite{EO}. Such calculations depend on asymptotic counts of square-tiled surfaces. While Eskin-Okounkov performed the count globally using the representation theory of the symmetric group, Zorich separated the count of the square-tiled surfaces according to their combinatorial type.

Given a square-tiled surface, one important piece of combinatorial data is the number of maximally embedded annuli in the horizontal and vertical directions, respectively called {\em horizontal} and {\em vertical cylinders}. It is a consequence of recent work of Delecroix-Goujard-Zograf-Zorich that square-tiled surfaces with one vertical and one horizontal cylinder, which we shall call {\em 1,1-square-tiled surfaces}, make a non-zero contribution to the volumes of strata of Abelian differentials {\cite[Section 2]{DGZZ}}, and moreover equidistribute as the number of squares tends to infinity. Indeed, they showed that this equidistribution is true more generally for square-tiled surfaces of fixed combinatorial type in any $\GL(2,\R)$-invariant suborbifold containing a single square-tiled surface {\cite[Theorem 1.4]{DGZZ}}. By this we mean, in any finite volume open subset $U$, a point chosen at random from an $\epsilon$-grid in $U$ is a square-tiled surface having the desired combinatorics with probability that, as $\epsilon$ tends to zero, does not depend on $U$.

An Euler-characteristic argument shows that the minimum number of squares required for a square-tiled surface in the stratum $\calH(k_{1},\ldots,k_{n})$ is $2g+n-2$. Square-tiled surfaces realising this number exist in every connected component. Indeed, one can take square-tiled surfaces constructed from the Jenkins-Strebel permutation representatives given by Zorich~\cite{Z}. However, in a given connected component, it is not clear at what number of squares one might expect to find the first example of a 1,1-square-tiled surface. We answer this question by constructing examples of such surfaces in every connected component of every stratum of Abelian differentials. Indeed, our main result is the following.

\begin{theorem}\label{STS}
With the exception of the connected components $\hyp(2g-2)$ and $\hyp(g-1,g-1)$, every connected component of every stratum of Abelian differentials has a 1,1-square-tiled surface with the minimal number of squares required for a square-tiled surface in the ambient stratum. The connected components $\hyp(2g-2)$ and $\hyp(g-1,g-1)$ have 1,1-square-tiled surfaces with $4g-4$ and $4g-2$ squares, respectively. Moreover, these are respectively the minimum number of squares required to construct 1,1-square-tiled surfaces in $\hyp(2g-2)$ and $\hyp(g-1,g-1)$.
\end{theorem}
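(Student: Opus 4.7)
The plan is to encode 1,1-square-tiled surfaces with $N$ squares as pairs $(\sigma_h,\sigma_v)$ of $N$-cycles in the symmetric group $S_N$. The condition that the surface is 1,1 forces both permutations to be single $N$-cycles, and the stratum $\calH(k_1,\ldots,k_n)$ is recovered from the cycle type of the commutator $[\sigma_h,\sigma_v]=\sigma_h\sigma_v\sigma_h^{-1}\sigma_v^{-1}$, which must be $(k_1+1,\ldots,k_n+1)$ together with fixed points accounting for the remaining $N-\sum(k_i+1)$ entries. In the minimum-square setting $N=2g+n-2$, the commutator has no fixed points, so the combinatorial problem is very tight.

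First, for each non-hyperelliptic component I would construct explicit pairs of $N$-cycles realising the required commutator cycle type. A reasonable approach is to fix a canonical base construction in the stratum $\calH(2g-2)$ with $N=2g-1$ squares, and then obtain examples in $\calH(k_1,\ldots,k_n)$ by a controlled zero-splitting move: inserting a new square near a zero of order $k$ so as to break it into zeros of orders $k_1+k_2=k$ while keeping $\sigma_h$ and $\sigma_v$ both single cycles. Organising the constructions by an induction on $n$ (or equivalently on $N$) keeps the bookkeeping manageable and lets one reuse the same symmetry to cover many components simultaneously.

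Second, I would verify the connected-component invariants. By Kontsevich--Zorich, non-hyperelliptic components are distinguished by spin parity (when defined, i.e.\ when all $k_i$ are even), so it suffices to compute the Arf invariant combinatorially from $(\sigma_h,\sigma_v)$ via the standard formula in terms of the cylinder decomposition. To realise both parities in the same stratum I would modify a base construction by a local transposition that toggles the spin invariant while leaving the cycle type of the commutator intact, and check this modification on a small model example.

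Third, I would handle the hyperelliptic components $\hyp(2g-2)$ and $\hyp(g-1,g-1)$ separately. For the upper bound I would exhibit 1,1-square-tiled surfaces with $4g-4$ and $4g-2$ squares respectively, constructed symmetrically with respect to a visible $\pi$-rotation that descends to the hyperelliptic involution; the symmetry of the construction certifies membership in the hyperelliptic component. The main obstacle is the matching lower bound. For this, I would use that the hyperelliptic involution $\tau$ on a 1,1-surface must preserve (setwise) the unique horizontal and unique vertical cylinders and reverse both of their core curves; combined with the Weierstrass-point structure ($2g+1$ fixed points plus the zero in $\hyp(2g-2)$, and the two zeros being exchanged or both Weierstrass in $\hyp(g-1,g-1)$), this forces $\tau$ to act on the set of squares by a fixed-point-free or near-fixed-point-free involution whose compatibility with $\sigma_h$ and $\sigma_v$ constrains $N$ to satisfy $N\geq 4g-4$ and $N\geq 4g-2$ respectively. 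Equivalently, one may pass to the quotient by $\tau$, identify the 1,1 condition with the corresponding condition on a pillowcase cover, and read off the lower bound from the cone-point data there.

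The main obstacle is this lower bound in the hyperelliptic case: the upper bound constructions and the spin parity verifications reduce to bookkeeping, but ruling out small hyperelliptic 1,1-examples requires genuinely exploiting the involution's interaction with the cylinder decomposition.
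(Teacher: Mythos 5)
Your architecture is plausible and you correctly flag the hyperelliptic lower bound as the hardest point, but the proposal has gaps at exactly the places where the real work lies. The ``controlled zero-splitting move'' --- inserting one square near a zero of order $k$ to split it while keeping both $\sigma_h$ and $\sigma_v$ single $N$-cycles --- is asserted, not constructed, and its existence in every case is essentially equivalent to the theorem; the paper notes that precisely this kind of na\"ive local search (via Rauzy moves) is infeasible. The paper instead goes in the opposite direction: it glues small 1,1-surfaces by a cut-and-paste ``cylinder concatenation'' (cut along a slope-one segment, reglue), which provably concatenates both cylinders, takes the disjoint union of the zero sets, and adds the spin parities plus one; the base cases are explicit surfaces in $\odd(2k)$, $\even(2k)$ and $\calH(2j+1,2k+1)$. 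Either direction of induction must confront the facts that there is no minimal 1,1-surface in $\calH(2)$ or $\calH(1,1)$ (all of genus two is hyperelliptic) and that $\even(4)$ and $\even(2,2)$ do not exist; in particular your proposed ``local transposition that toggles the spin invariant while leaving the commutator's cycle type intact'' cannot exist uniformly, and the paper has to route around these obstructions with a collection of separately constructed exceptional cases, some found by computer.

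For the lower bound in $\hyp(2g-2)$ and $\hyp(g-1,g-1)$, your reduction to the involution's action on the set of squares (or to pillowcase covers) does not yet contain an argument that outputs $4g-4$ and $4g-2$. The paper's mechanism is more elementary and is the step you are missing: the two core curves are nonseparating and preserved by the involution, so they descend to two simple arcs on the $(2g+2)$-punctured sphere; because the pair fills, each complementary region of the two arcs contains at most one puncture (an arc joining two punctures inside one region would lift to an essential curve disjoint from the filling pair); an Euler-characteristic count of the complementary regions, split according to the parity of $n$ and the endpoint configuration of the arcs, yields $n\geq 4g-4$. In $\calH(g-1,g-1)$ the two zeros are exchanged by the involution, hence project to a puncture-free complementary region, which costs one additional region and therefore two additional intersections, giving $n\geq 4g-2$. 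Without an argument of this precision the lower bound, and hence the last sentence of the theorem, remains open in your approach.
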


The theorem demonstrates that, for all but the hyperelliptic components, 1,1-square-tiled surfaces are exhibited in the minimum number of squares possible. The code realising this construction has been included into the \texttt{surface{\_}dynamics} package \cite{D1} of SageMath \cite{sage}.

The extension of this result to every connected component of every stratum of quadratic differentials is work in progress. A direction for further research would be to investigate what this number is for more general $\GL(2,\R)$-invariant suborbifolds.

\subsection*{Ratio-optimising pseudo-Anosovs} Consider the \teichmuller space of marked hyperbolic metrics on the surface $S$, $\T(S)$, equipped with the \teichmuller metric $d_{\T}$, and the curve graph of the surface $S$, $\C(S)$, equipped with the path metric $d_{\C}$. The systole map, $\sys\!:\,\T(S)\to\C(S)$, is a coarsely-defined map that sends a marked hyperbolic metric to the isotopy class of the essential simple closed curve of shortest hyperbolic length. Masur-Minsky {\cite[Consequence of Lemma 2.4]{MM}} showed that there exists a constant $K>0$, depending only on $g$, and a $C\geq 0$ such that $d_{\C}(\sys(X),\sys(Y))\leq K\cdot d_{\T}(X,Y)+C$, for all $X,Y\in{\T(S)}$. In other words, the systole map is {\em coarsely $K$-Lipschitz}. This result was a key step in their proof that $\C(S)$ is $\delta$-hyperbolic. 

It is natural to ask what is the optimum Lipschitz constant, $\kappa_{g}$, defined by
\[\kappa_{g}:=\inf\{K>0\,|\,\exists\,C\geq 0\,\text{such that sys is coarsely }K\text{-Lipschitz}\}.\]
Gadre-Hironaka-Kent-Leininger determined that the ratio of $\kappa_{g}$ to $1/\log(g)$ is bounded from above and below by two positive constants {\cite[Theorem 1.1]{GHKL}}. In such a case, we use the notation $\kappa_{g}\asymp 1/\log(g)$, and say that $\kappa_{g}$ is {\em comparable} to $1/\log(g)$. To find an upper bound for $\kappa_{g}$, Gadre-Hironaka-Kent-Leininger gave a careful version of the proof of Masur-Minsky that $\sys$ is coarsely Lipschitz. They then constructed pseudo-Anosov homeomorphisms for which the ratio $\ell_{\C}(f)/\ell_{\T}(f)\asymp 1/\log(g)$, where $\ell_{\C}(f)$ and $\ell_{\T}(f)$ are the asymptotic translation lengths of $f$ in $\C(S)$ and $\T(S)$, respectively. They then obtained a lower bound for $\kappa_{g}$ by noting that, for any pseudo-Anosov homeomorphism $f$, we have
\[\kappa_{g}\geq\frac{\ell_{\C}(f)}{\ell_{\T}(f)}.\]

Recall that a pair of essential simple closed curves which are in minimal position on a surface $S$ are said to be a {\em filling pair} if the complement of their union is a disjoint collection of disks. Using a Thurston construction on filling pairs, Aougab-Taylor constructed an infinite family of pseudo-Anosov homeomorphisms for which $\tau(f) := \ell_{\T}(f)/\ell_{\C}(f)$ was bounded above by a function $F(g)\asymp\log(g)$ {\cite[Theorem 1.1]{AT2}}; such homeomorphisms are said to be {\em ratio-optimising}. More specifically, given a filling pair $(\alpha,\beta)$ on the surface $S$ with geometric intersection number $i(\alpha,\beta)\asymp g$, they used a Thurston construction on $(\alpha,\beta)$ to construct pseudo-Anosov homeomorphisms for which $\tau(f)\leq\log(D\cdot i(\alpha,\beta))$, where $D$ is a constant independent of $g$. Furthermore, they showed that infinitely many conjugacy classes of primitive ratio-optimising pseudo-Anosov homeomorphisms, produced as above, have their invariant axis contained in the \teichmuller disk $\D(\alpha,\beta)$ of the flat structure determined by the filling pair $(\alpha,\beta)$. The \teichmuller disk $\D(\alpha,\beta)\subset{\T(S)}$ is the image of an embedding of $\Hbb$ determined by the flat structure given by the filling pair $(\alpha,\beta)$.

We observe that the core curves of the cylinders of a 1,1-square-tiled surface form a filling pair with geometric intersection number equal to the number of squares. Hence, as a consequence of Theorem~\ref{STS}, we have the following result.

\begin{theorem}\label{RO}
Given any connected component of any stratum of Abelian differentials, there exist infinitely many conjugacy classes of primitive ratio-optimising pseudo-Anosov homeomorphisms whose invariant axis is contained in the \teichmuller disk of an Abelian differential in that connected component.
\end{theorem}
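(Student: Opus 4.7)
The plan is straightforward: feed the 1,1-square-tiled surfaces produced by Theorem~\ref{STS} into the ratio-optimising construction of Aougab-Taylor. First I would fix a connected component of some stratum $\calH(k_{1},\ldots,k_{n})$ and invoke Theorem~\ref{STS} to obtain a 1,1-square-tiled surface $(X,\omega)$ in it, on some number $N$ of squares. Let $\alpha$ and $\beta$ denote the core curves of the unique horizontal and unique vertical cylinder of $(X,\omega)$, respectively.

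The key observation to record is that $(\alpha,\beta)$ is a filling pair on $S$ with $i(\alpha,\beta)=N$. Every square belongs to both cylinders, so $\alpha$ and $\beta$ cross transversally exactly once at the centre of each square, giving $i(\alpha,\beta)=N$. Moreover, the four quarter-squares around any vertex of the tiling glue together to form a single open disk, so $X\setminus(\alpha\cup\beta)$ is a disjoint union of open disks and $(\alpha,\beta)$ fills. I would then check that $N\asymp g$: Theorem~\ref{STS} gives $N=2g+n-2$ (with $1\leq n\leq 2g-2$) outside the hyperelliptic components, and $N=4g-4$ or $N=4g-2$ in $\hyp(2g-2)$ and $\hyp(g-1,g-1)$ respectively, so in every case $i(\alpha,\beta)\asymp g$, matching the hypothesis of \cite[Theorem 1.1]{AT2}.

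With the filling pair in hand, \cite[Theorem 1.1]{AT2} produces infinitely many conjugacy classes of primitive ratio-optimising pseudo-Anosov homeomorphisms whose invariant axes lie in $\D(\alpha,\beta)$. The one point requiring care is to identify $\D(\alpha,\beta)$ with the \teichmuller disk of $(X,\omega)$: Thurston's flat structure associated to $(\alpha,\beta)$ is built by placing a Euclidean rectangle at each point of $\alpha\cap\beta$ and gluing along the two curves, and taking all rectangles to be unit squares recovers $(X,\omega)$ exactly. Hence $\D(\alpha,\beta)$ coincides with the \teichmuller disk of $(X,\omega)$ and is therefore contained in the chosen connected component. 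I do not foresee a real obstacle here; once Theorem~\ref{STS} is granted, the result is essentially a packaging of Aougab-Taylor's theorem via the standard dictionary between 1,1-square-tiled surfaces and Thurston flat structures on filling pairs of simple closed curves.
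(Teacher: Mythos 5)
Your proposal is correct and follows essentially the same route as the paper: invoke Theorem~\ref{STS} to get a 1,1-square-tiled surface in the given component, observe that the core curves of its two cylinders form a filling pair with $i(\alpha,\beta)$ equal to the number of squares (hence between $2g-1$ and $4g-2$, so $\asymp g$), and feed this into the Aougab--Taylor construction to obtain infinitely many conjugacy classes of primitive ratio-optimising pseudo-Anosovs with invariant axis in $\D(\alpha,\beta)$, which is the \teichmuller disk of the square-tiled surface itself. The extra details you supply (why the pair fills, and the identification of $\D(\alpha,\beta)$ with the disk of $(X,\omega)$) are consistent with the observations the paper records in its introduction and in Subsection~\ref{fpd}.
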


That is, ratio-optimising pseudo-Anosov homeomorphisms are, in a reasonable sense, ubiquitous in the connected components of strata of Abelian differentials.

\subsection*{Filling pairs on punctured surfaces} Let $S_{g,p}$ denote the surface of genus $g\geq{0}$ with $p\geq{0}$ punctures. We define $i_{g,p}$ to be the minimal geometric intersection number for a filling pair on $S_{g,p}$. The values of $i_{g,p}$ were determined in the works of Aougab-Huang \cite{AH}, Aougab-Taylor \cite{AT1}, and the author \cite{J}. 

For $g\geq 1$, one can ask whether $i_{g,p}$ can be realised as the algebraic intersection number, $\widehat{i}(\alpha,\beta)$, of a filling pair $(\alpha,\beta)$. Aougab-Menasco-Nieland \cite{AMN} answered this question for the case of $i_{g,0}$; that is, for minimally intersecting filling pairs on closed surfaces. Moreover, they were interested in counting the number of mapping class group orbits of such filling pairs. Their method involves algebraically constructing 1,1-square-tiled surfaces with the minimum number of squares in the stratum $\calH(2g-2)$, which they call square-tiled surfaces with connected leaves. The core curves of the cylinders of such surfaces give rise to filling pairs with algebraic intersection number equal to $i_{g,0}$.

Let $n\geq{i_{g,p}}$. By a {\em compatible decomposition} of the surface $S_{g,p}$ into $n+2-2g$ many $4k$-gons, we mean a decomposition of the surface into $4k$-gons $P_{1},\ldots,P_{n+2-2g}$ such that, if $P_{i}$ is a $4(k_{i}+1)$-gon for $k_{i}\geq 0$, then $\sum k_{i} = 2g-2$.

The filling pairs obtained from the cylinders of 1,1-square-tiled surfaces in any stratum of Abelian differentials have algebraic intersection number equal to geometric intersection number and also give rise to a decomposition of the surface into a number of $4k$-gons, with the number of polygons and the number of sides of each polygon depending on the stratum of the square-tiled surface. The resulting set of $4k$-gons form a compatible decomposition of the surface. Indeed, each $4(k_{i}+1)$-gon corresponds to a zero of the Abelian differential of order $k_{i}$.

Using a simple modification of the constructions used in the proof of Theorem~\ref{STS}, we obtain the following result.

\begin{theorem}\label{FP}
Let $n\geq{i_{g,p}}$ and choose a compatible decomposition of $S_{g,p}$ into $n+2-2g$ many $4k$-gons, then there exists a filling pair $(\alpha,\beta)$ on the surface $S_{g,p}$ with
\[\widehat{i}(\alpha,\beta)=i(\alpha,\beta)=n,\]
that gives rise to the specified polygonal decomposition of $S_{g,p}$.
\end{theorem}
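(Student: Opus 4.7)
The plan is to realize the filling pair $(\alpha,\beta)$ as the core curves of the horizontal and vertical cylinders of a 1,1-square-tiled surface on the closed surface $S_g$, and then obtain $S_{g,p}$ by puncturing $p$ of the singularities of the underlying Abelian differential. Given the compatible decomposition with polygon sizes $4(k_i+1)$ (so $m=n+2-2g$ polygons, $\sum k_i=2g-2$, $k_i\geq 0$), regard any $k_i=0$ entry as a marked regular point of cone angle $2\pi$; the decomposition then corresponds to a stratum $\calH(k_1,\ldots,k_m)$ of Abelian differentials on $S_g$ with $m$ labeled singularities, in which the minimum square count is exactly $2g+m-2=n$.

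The key step is to produce a 1,1-square-tiled surface in this decorated stratum with exactly $n$ squares. When all $k_i\geq 1$ this is precisely Theorem~\ref{STS}: the minimum-square 1,1-surface it furnishes has core curves $(\alpha,\beta)$ dividing $S_g$ into the required $m$ polygons of sizes $4(k_i+1)$, one around each zero. When some $k_i=0$, one modifies the constructions in the proof of Theorem~\ref{STS} by splitting an appropriate cycle of the defining permutations for each marked regular vertex required: each such split inserts an extra vertex of cone angle $2\pi$ into the square tiling without altering the single-cylinder structure in either direction. Iterating this modification accommodates any compatible decomposition allowed by the hypotheses.

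With the 1,1-square-tiled surface in hand, the core curves intersect once per square, giving $i(\alpha,\beta)=n$; orienting $\alpha$ and $\beta$ in the directions of horizontal and vertical flow ensures every intersection has the same sign (the horizontal and vertical directions being globally coherent on an Abelian differential), hence $\widehat{i}(\alpha,\beta)=n$. By Euler's formula, the complement $S_g\setminus(\alpha\cup\beta)$ consists of $m$ polygons of the required sizes, each containing one singularity in its interior. Finally, obtain $S_{g,p}$ by puncturing $p$ of these interior singularities; the hypothesis $n\geq i_{g,p}$ ensures $m=n+2-2g\geq p$, so there are enough vertices available. The filling-pair property and the polygonal decomposition then descend to $S_{g,p}$, matching the prescribed compatible decomposition.

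The main obstacle is the combinatorial modification of the constructions from the proof of Theorem~\ref{STS} to include marked regular vertices: one must verify that the required cycle-splittings can be executed for every allowed distribution of $k_i$'s while preserving the single-cylinder property in both the horizontal and vertical directions. This requires a case analysis parallel to, but gentler than, the one carried out in the proof of Theorem~\ref{STS} itself.
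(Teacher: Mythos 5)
Your overall strategy --- realize $(\alpha,\beta)$ as the core curves of an $n$-square 1,1-square-tiled surface whose singularity data (including order-zero marked points) matches the prescribed decomposition, then puncture --- is exactly the paper's. The gap is the step you yourself flag as ``the main obstacle'': you assert that splitting a side of the permutation representative inserts a cone-angle-$2\pi$ vertex ``without altering the single-cylinder structure in either direction,'' but this is false in general. Inserting a symbol $x'$ after $x$ in both rows always preserves the single horizontal cylinder, yet it can split the single vertical cylinder into two, or merge two into one --- the paper's Figure~\ref{sts} is precisely such an example, where splitting side $1$ changes the vertical cylinder count from two to one, and the reverse phenomenon occurs just as easily (e.g.\ splitting side $1$ in the representative with bottom row $2,3,1,0$ turns one vertical cylinder into two). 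So ``splitting an appropriate cycle'' requires a careful choice of side, and you give no argument that a good choice always exists for every distribution of marked points. The paper sidesteps this entirely: each marked point is added by cylinder-concatenating, as in Lemma~\ref{comb1}, with the one-square torus $\left(\begin{smallmatrix}0&1\\1&0\end{smallmatrix}\right)$, which provably preserves the 1,1 property, adds exactly one square, and creates one zero of order $0$. That torus-concatenation trick is the missing idea.

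A second omission is low genus, where your appeal to Theorem~\ref{STS} breaks down. For $g=1$ there is no stratum to invoke (the paper simply concatenates the torus with itself $n$ times), and for $g=2$ the strata $\calH(2)$ and $\calH(1,1)$ are entirely hyperelliptic, so by Proposition~\ref{hypmin} the minimal 1,1-square-tiled surfaces there have $4$ and $6$ squares rather than the theoretical minima $3$ and $4$; moreover their permutation representatives do not have the form required by Lemma~\ref{comb1}, which is why the paper instead splits the specific symbol $3$ in the representatives of Proposition~\ref{Hyp}. Any complete write-up must treat these cases separately (and should confront the fact that, for $g=2$ and small $n$, the two-octagon decomposition is actually obstructed by Proposition~\ref{hypmin} --- a constraint your argument, which treats all decompositions uniformly, does not see).
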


This generalises the existence part of the work of Aougab-Menasco-Nieland to the case of any intersection number on any surface $S_{g,p}$.

\subsection*{Sketch of proof of Theorem~\ref{STS}} One might expect that a construction of 1,1-square-tiled surfaces could be achieved by starting with a preferred choice of permutation representative and applying a sequence of Rauzy moves to obtain the desired combinatorics. However, this method is not adequate because the complexity of Rauzy diagrams grow in such a way as to make this extremely computationally difficult. Moreover, the hope that one would be able to easily find such a sequence of Rauzy moves for each connected component is naive. Indeed, the complexity of such a method is demonstrated, for example, in the case of the strata $\calH(2g-5,1,1,1)$ where, with the permutation representatives given by Zorich \cite{Z}, a different sequence of Rauzy moves is required depending on the residue of $2g-5$ modulo 4; see the differing permutation representatives in Proposition~\ref{2g-5} which were obtained by such a method. As such, it seems unreasonable to expect to find a general proof of this nature.

Only in the extremely rigid case of the hyperelliptic components is a proof similar to this achieved. In fact, 1,1-square-tiled surfaces in these components are constructed by hand by adding regular points to the combinatorics given by Rauzy. A method of Margalit relating to minimally intersecting filling pairs on the surface of genus two, referenced in a paper of Aougab-Huang {\cite[Remark 2.18]{AH}}, is then formalised and generalised in order to show that the number of squares achieved for these components is actually the minimum required.

An inductive construction is then adopted in order to build 1,1-square-tiled surfaces in nonhyperelliptic connected components. More specifically, we show that 1,1-square-tiled surfaces in a general connected component can be constructed from 1,1-square-tiled surfaces of lower complexity in such a way that the resulting number of squares and the parity of any resulting spin structure can be easily controlled. We then construct the families of lower complexity 1,1-square-tiled surfaces required to allow this procedure to be completed. A small number of low complexity exceptional cases were found computationally using the \texttt{surface{\_}dynamics} package \cite{D1} of SageMath \cite{sage}.

\subsection*{Plan of the paper}

The combination lemmas that describe how to combine 1,1-square-tiled surfaces, key to the proof of Theorem~\ref{STS}, are given in Subsection~\ref{CL}. Requiring a separate proof method, 1,1-square-tiled surfaces in the hyperelliptic components are constructed first in Subsection~\ref{hyp}. The inductive construction for 1,1-square-tiled surfaces in the remaining connected components is then performed in the rest of Section~\ref{con}. Finally, the proofs of Theorem~\ref{RO} and Theorem~\ref{FP} are given in Section~\ref{RO1} and Section~\ref{FP1}, respectively.

\subsection*{Acknowledgements} We thank Vaibhav Gadre and Tara Brendle for useful discussions and for reading early drafts of this manuscript. We are grateful to the anonymous referee whose comments and suggestions helped to improve this manuscript. We give thanks also to Vincent Delecroix for his help with the \texttt{surface{\_}dynamics} package \cite{D1} of SageMath \cite{sage}. This research was funded by an EPSRC Studentship (EPSRC DTG EP/N509668/1 M{\&}S) while the author was a PhD student at the University of Glasgow.


\section{Moduli space of Abelian differentials}

In this section, we will give the necessary background on Abelian differentials. For more details on this material, we refer the reader to the surveys of Forni-Matheus \cite{FoM} and Zorich \cite{Z1}.

Recall that for $g\geq 2$ we defined the space $\calH$ to be the {\em moduli space of Abelian differentials} on the surface of genus $g$. That is, $\calH$ consists of equivalence classes of pairs $(S,\omega)$ where $S$ is a closed connected Riemann surface of genus $g$ and $\omega$ is a non-zero holomorphic 1-form on $S$, also called an Abelian differential. Two such pairs $(S,\omega)$ and $(S',\omega')$ are equivalent if there exists a biholomorphism $f:S\to S'$ with $f^{*}\omega'=\omega$. When it is appropriate to do so we will simply denote the pair $(S,\omega)$ by either $S$ or $\omega$.

Given an Abelian differential $\omega$ on a Riemann surface $S$, contour integration gives a collection of charts to $\Cbb$ with transition maps given by translations $z\mapsto z+c$. We can then obtain a flat metric on $S$ with cone-type singularities at the zeros of $\omega$ by pulling back the standard metric on $\Cbb$. As such, we can realise the surface $S$ as a finite collection of polygons in $\Cbb$ with pairs of parallel sides of equal length identified by translation, and locally, away from the zeros, the pushforward of $\omega$ gives the standard 1-form ${\rm d}z$ on $\Cbb$. A singularity corresponding to a zero of order $k$ will have a cone-angle of $(k+1)2\pi$ in this metric. The converse also holds. Indeed, given a finite collection of polygons in $\Cbb$ with parallel sides identified by translation, one can define a Riemann surface structure on the surface $S$ obtained from the quotient of these polygons by the side identifications. The local pullback of ${\rm d}z$ will give rise to a well-defined Abelian differential on $S$. Due to this correspondence, points in $\calH$ may also be called {\em translation surfaces}. More specifically, a point is an equivalence class of surfaces equipped with translation structures.

By the Riemann-Roch theorem, the sum of the orders of the zeros of an Abelian differential on a Riemann surface of genus $g$ is equal to $2g-2$ and this data can be used to stratify $\calH$. The stratum $\calH(k_{1},\ldots,k_{n})\subset\calH$, with $k_{i}\geq 1$ and $\sum k_{i} = 2g-2$, is the subset of $\calH$ consisting of Abelian differentials with $n$ distinct zeros of orders $k_{1},\ldots,k_{n}$. Each stratum is an orbifold of complex dimension $2g+n-1$.

The individual strata of $\calH$ may have a number of connected components and the work of Kontsevich-Zorich completely classified these components \cite{KZ}. We will first describe the invariants that they used in their proof before describing the classification itself.

\subsection{Hyperellipticity} 

We say that a translation surface $(S,\omega)$ is {\em hyperelliptic} if there exists an isometric involution $\tau:S\to S$, known as a hyperelliptic involution, that induces a ramified double cover $\pi:S\to S_{0,2g+2}$ from $S$ to the $(2g+2)$-times punctured sphere. Note that we must have $\tau^{*}\omega=-\omega$. Kontsevich-Zorich showed that the strata $\calH(2g-2)$ and $\calH(g-1,g-1)$ contain connected components, denoted by $\hyp(2g-2)$ and $\hyp(g-1,g-1)$ respectively, consisting entirely of hyperelliptic translation surfaces. These connected components will be called the {\em hyperelliptic components}.

We note that the zero of an Abelian differential in $\hyp(2g-2)$ is fixed by the hyperelliptic involution and the two zeros of an Abelian differential in $\hyp(g-1,g-1)$ are symmetric under the hyperelliptic involution.

\subsection{Spin structures and parity} 

The second invariant used to classify the connected components of a stratum is the notion of the parity of a spin structure.

A {\em spin structure} on a Riemann surface $S$ is a choice of half of the canonical class. That is, a choice of divisor class $D\in Pic(S)$ such that
\[2D = K_{S},\]
where $K_{S}$ is the canonical class of $S$. The {\em parity of the spin structure} $D$ is defined to be
\[\dim\Gamma(S,L)\text{ mod } 2,\]
for a line bundle $L$ corresponding to the divisor class $D$, where $\Gamma(S,L)$ is the space of holomorphic sections of the line bundle $L$ on $S$.

Given an Abelian differential $\omega\in\calH(2k_{1},\ldots,2k_{n})$ the divisor
\[Z_{\omega} = 2k_{1}P_{1}+\cdots+2k_{n}P_{n}\]
represents the canonical class $K_{S}$. As such, we have a canonical choice of spin structure on $S$ given by the divisor class
\[D_{\omega} = [k_{1}P_{1}+\cdots+k_{n}P_{n}].\]
Atiyah \cite{A} and Mumford \cite{M} demonstrated that the parity of a spin structure is invariant under continuous deformation. As such, the parity of the canonical spin structure given by an Abelian differential is constant on each connected component of the stratum. We will say that a connected component has {\em even} or {\em odd spin structure} depending on whether or not the parity of $\D_{\omega}$ is 0 or 1.

Recall, that an Abelian differential $\omega$ on $S$ determines a flat metric on $S$ with cone-type singularities. Moreover, this metric has trivial holonomy, and away from the zeros of $\omega$ there is a well-defined horizontal direction. We can therefore define the index, $\ind(\gamma)$, of a simple closed curve $\gamma$ on $S$, avoiding the singularities, to be the degree of the Gauss map of $\gamma$. That is, $\ind(\gamma)$ is the integer such that the total change of angle between the vector tangent to $\gamma$ and the vector tangent to the horizontal direction determined by $\omega$ is $2\pi\cdot \ind(\gamma)$.

Given $\omega\in\calH(2k_{1},\ldots,2k_{n})$, we define a function $\Omega_{\omega}:H_{1}(S,\Z_{2})\to\Z_{2}$ by
\[\Omega_{\omega}([\gamma]) = \ind(\gamma)+1\text{ mod } 2,\]
where $\gamma$ is a simple closed curve and extend to a general homology class by linearity. We claim that this function is well-defined. Firstly, if we homotope a simple closed curve $\gamma$ across a zero of order $k$ then $\ind(\gamma)$ will change by $\pm k$ but since all of our zeros have even order this will fix $\ind(\gamma)$ modulo 2. One can check that for the boundary $\delta$ of a small disk not containing a zero, we have that $\ind(\delta) + 1 \equiv 1 + 1 \equiv 0\mod 2$. For the boundary $\delta$ of a small disk containing a zero of order $k$ we have $\ind(\delta) \equiv (k+1)+1 \equiv 0\mod 2 $ since all of our zeros are of even order. Moreover, it follows from the Poincar{\'e}-Hopf Theorem that $\ind(\delta) + 1 \equiv 0\mod 2$ for any null-homologous simple closed curve $\delta$. Therefore, $\Omega_{\omega}(\mathbf{0}) \equiv 0\mod 2$, and so $\Omega_{\omega}$ is indeed well-defined.

The function $\Omega_{\omega}$ can be shown to be a quadratic form on $H_{1}(S,\Z_{2})$, by which we mean
\[\Omega_{\omega}(a+b) = \Omega_{\omega}(a)+\Omega_{\omega}(b)+a\cdot b,\]
where $a\cdot b$ denotes the standard symplectic intersection form on $H_{1}(S,\Z_{2})$. Now given a choice of representatives $\{\alpha_{i},\beta_{i}\}_{i = 1}^{g}$ of a symplectic basis for $H_{1}(S,\Z_{2})$, we define the {\em Arf invariant} of $\Omega_{\omega}$ to be
\[\sum_{i = 1}^{g}\Omega_{\omega}([\alpha_{i}])\cdot\Omega_{\omega}([\beta_{i}])\text{ mod }2 = \sum_{i = 1}^{g}(\ind(\alpha_{i})+1)(\ind(\beta_{i})+1)\text{ mod }2.\]
Arf \cite{Arf} proved that this number is independent of the choice of symplectic basis and Johnson \cite{Jo} showed that quadratic forms on $H_{1}(S,\Z_{2})$ are in one-to-one correspondence with spin structures on $S$. Moreover, Johnson proved that the value of the Arf invariant of $\Omega_{\omega}$ coincides with the parity of the canonical spin structure determined by $\omega$. We will make use of this formula when we calculate the parity of spin structures later in the paper.

\subsection{Classification of connected components} 

We are now ready to state the classification result of Kontsevich-Zorich. The classification has a stability range and as such the result is given in two parts. 

\begin{theorem}[\cite{KZ}, Theorem 1]
All connected components of strata of Abelian differentials on Riemann surfaces of genus $g\geq 4$ are described by the following list:

The stratum $\calH(2g-2)$ has three connected components: the hyperelliptic one, $\hyp(2g-2)$, and two other components: $\even(2g-2)$ and $\odd(2g-2)$ corresponding to even and odd spin structures.

The stratum $\calH(2l,2l)$, $l\geq 2$, has three connected components: the hyperelliptic one, $\hyp(2l,2l)$, and two other components: $\even(2l,2l)$ and $\odd(2l,2l)$ corresponding to even and odd spin structures.

All other strata of the form $\calH(2l_{1},\ldots,2l_{n})$, $l_{i}\geq 1$, have two connected components: $\even(2l_{1},\ldots,2l_{n})$ and $\odd(2l_{1},\ldots,2l_{n})$ corresponding to even and odd spin structures.

The strata $\calH(2l-1,2l-1)$, $l\geq 2$, has two components: one of them $\hyp(2l-1,2l-1)$ is hyperelliptic; the other $\non(2l-1,2l-1)$ is not.

All other strata of Abelian differentials on Riemann surfaces of genus $g\geq 4$ are nonempty and connected.
\end{theorem}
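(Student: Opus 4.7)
My plan is to establish the theorem in two directions. In one direction, to obtain the lower bound on the number of components, I would invoke the two invariants developed in the paper: hyperellipticity and the parity of the canonical spin structure. Hyperellipticity cuts out an open-and-closed subset of each stratum (the involution $\tau$ deforms continuously with the surface, and a limiting involution exists on any limit of hyperelliptic differentials), and spin parity is deformation invariant by Atiyah--Mumford. Thus each stratum splits into at least as many connected components as distinct invariant-pairs realized, giving three for $\calH(2g-2)$ and $\calH(2l,2l)$ (hyperelliptic, even, odd nonhyperelliptic), two for the other strata with all even orders (even, odd), two for $\calH(2l-1,2l-1)$ (hyperelliptic, nonhyperelliptic, with spin parity undefined since the zero orders are odd), and at least one in the remaining cases. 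Realizability of each listed invariant-pair I would exhibit via an explicit Jenkins--Strebel representative with a cylinder decomposition, computing its Arf invariant using the symplectic basis formula derived above.

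In the other direction, to show no further components exist, the natural strategy is induction on the number $n$ of zeros (and on $g$) using local surgery operations on translation surfaces. The basic move is a \emph{zero-splitting surgery}, which replaces a zero of order $k_{1}+k_{2}$ with two nearby zeros of orders $k_{1}$ and $k_{2}$ joined by a short saddle connection, together with its inverse \emph{zero-merging surgery}. Both are continuous paths in the moduli space, and their effect on hyperellipticity and on $\Omega_{\omega}$ can be read off from an explicit local model. Using such surgeries, I would reduce the connectedness question for a general $\calH(2l_{1},\ldots,2l_{n})$ to the one-zero strata $\calH(2g-2)$, which would in turn be reduced to a finite combinatorial verification via Rauzy--Veech induction: Jenkins--Strebel differentials are dense in each component and are parametrised by permutations modulo extended Rauzy equivalence, so one need only check that the Rauzy classes in $\calH(2g-2)$ biject with the three predicted components, and similarly for the other base strata.

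The main obstacle I expect is the surjectivity of the surgery-induced map on components together with the base cases. After a zero-splitting, it is not immediate that every surface in the target stratum with matching invariants can be reached from a single split surface by a path avoiding the hyperelliptic locus or crossing into the opposite spin component; one must supplement splittings with auxiliary deformations inside the target stratum, such as exchanging positions of two nearby zeros along a short saddle connection, to saturate the intended component. For the base case $\calH(2g-2)$ at the low end of the stability range $g=4$, I would either compute the Rauzy classes directly and match each to its invariant-pair, or construct three distinguished representatives (a hyperelliptic branched cover of the torus, plus two explicit nonhyperelliptic Jenkins--Strebel differentials whose Arf invariants are opposite) and argue by a separate deformation argument that any other surface can be joined to one of them. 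The stability assumption $g\geq 4$ is essential, since in low genus further coincidences occur (e.g.\ all of $\calH(2)$ is hyperelliptic), and these need to be excluded from the uniform statement.
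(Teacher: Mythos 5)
The paper does not prove this statement: it is quoted verbatim as {\cite[Theorem~1]{KZ}} and used as imported background, so there is no in-paper argument to compare yours against. Judged on its own terms, your outline does reproduce the broad architecture of Kontsevich--Zorich's actual proof: deformation-invariance of hyperellipticity and of the Arf invariant gives the lower bound on the number of components, explicit Jenkins--Strebel representatives realise each invariant pair, and a combination of local surgeries and Rauzy-class computations gives the upper bound. That much is sound.

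However, as a proof it is incomplete in exactly the places where the real content of \cite{KZ} lives, and you have labelled those places ``obstacles'' rather than resolved them. Concretely: (i) you never establish that hyperellipticity and spin parity are the \emph{only} obstructions to connecting two differentials, i.e.\ the surjectivity and well-definedness of the surgery-induced map on $\pi_{0}$ is asserted as the goal, not argued; (ii) your induction reduces the number of zeros via zero-splitting, but to run an induction ``on $g$'' you also need the opposite kind of surgery --- attaching or removing a handle/torus (``bubbling''), which changes the genus and the total order of zeros --- and you describe no such move, so the reduction to a finite list of base cases does not close; (iii) the base-case verification for $\calH(2g-2)$ at $g=4$ is left as a choice between two unexecuted options, and the claim that Rauzy classes in $\calH(2g-2)$ biject with the three predicted components is precisely Veech's correspondence plus the hard enumeration you are trying to prove, so invoking it here is close to circular. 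The invariant-theoretic half of your plan is correct and matches the standard treatment; the connectivity half is a roadmap, not a proof.
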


For lower genera, we have the following classification.

\begin{theorem}[\cite{KZ}, Theorem 2]
The moduli space of Abelian differentials on a Riemann surface of genus $g=2$ contains two strata: $\calH(1,1)$ and $\calH(2)$. Each of them is connected and coincides with its hyperelliptic component.

Each of the strata $\calH(2,2)$ and $\calH(4)$ of the moduli space of Abelian differentials on a Riemann surface of genus $g=3$ has two connected components: the hyperelliptic one, and one having odd spin structure. The other strata are connected for genus $g=3$.
\end{theorem}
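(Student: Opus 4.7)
For $g=2$, I would use the classical fact that every Riemann surface of genus~$2$ is hyperelliptic and that its hyperelliptic involution $\tau$ acts as $-1$ on $H^{0}(K_{S})$. Consequently $\tau^{*}\omega=-\omega$ for every Abelian differential $\omega$, and both $\calH(2)$ and $\calH(1,1)$ coincide with their hyperelliptic components. Each is connected because a hyperelliptic translation surface in one of these strata can be described as a branched double cover $\pi\colon S\to\CP^{1}$ together with the pullback of a meromorphic $1$-form on the base; the remaining parameter is a configuration of the $2g+2=6$ branch points (together, in the $\calH(1,1)$ case, with the image of the $\tau$-symmetric pair of zeros) on $\CP^{1}$ modulo $\mathrm{PGL}(2,\Cbb)$, and this configuration space is manifestly connected.

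For $g=3$, note that the spin parity is defined only when every zero has even order and that hyperelliptic components occur only in $\calH(2g-2)=\calH(4)$ and $\calH(g-1,g-1)=\calH(2,2)$. Hence no invariant introduced so far can cut $\calH(3,1)$, $\calH(2,1,1)$, or $\calH(1,1,1,1)$, so these should be connected. In $\calH(4)$ and $\calH(2,2)$, by contrast, the hyperelliptic locus is both open (by a dimension count on branched double covers of $\CP^{1}$) and closed (hyperellipticity passes to limits), so it is a union of connected components; applying the Arf-invariant formula $\sum_{i}(\ind(\alpha_{i})+1)(\ind(\beta_{i})+1)\bmod 2$ on an explicit symplectic basis on the standard hyperelliptic polygonal model shows that both $\hyp(4)$ and $\hyp(2,2)$ have even parity. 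Exhibiting one non-hyperelliptic odd-parity translation surface in each of $\calH(4)$ and $\calH(2,2)$---for instance a square-tiled surface built from one of Zorich's Jenkins-Strebel representatives---then forces each of these strata to contain at least the two components claimed.

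The genuinely hard step is to verify that each of the candidate components identified above is actually connected, rather than a union of several components sharing all the invariants we can currently test. My plan is to use the correspondence between connected components of strata and extended Rauzy classes of irreducible permutations: in genus three the relevant permutations have length $2g+n-1\leq 7$, so the extended Rauzy classes can be enumerated by a finite, computer-assisted search, and matching the counts then confirms the numbers of components in the theorem. As a geometric cross-check, I would combine this with the standard degeneration surgeries (resolving a double zero into two simple zeros, and its inverse collision) that pass between $\calH(1,1,1,1)$, $\calH(2,1,1)$, $\calH(3,1)$, $\calH(2,2)$, and $\calH(4)$ in a way preserving both spin parity and non-hyperellipticity, providing inductive control. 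The main obstacle is ruling out any further splitting within the non-hyperelliptic loci of $\calH(4)$ and $\calH(2,2)$; this ultimately reduces to Rauzy-class bookkeeping and is the heart of the low-genus case of the Kontsevich-Zorich classification.
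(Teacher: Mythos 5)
This statement is quoted from Kontsevich--Zorich ([KZ], Theorem 2); the paper offers no proof of it, so there is nothing internal to compare your attempt against, and it has to be judged on its own terms. Your skeleton is the right one, and the ``lower bound'' half is essentially complete: genus two is hyperelliptic, so $\calH(2)$ and $\calH(1,1)$ coincide with their hyperelliptic loci; the hyperelliptic locus in $\calH(4)$ and $\calH(2,2)$ is open and closed, hence a union of components, and has even parity, so exhibiting one odd-parity surface in each of those strata gives at least two components. Your connectedness argument for the $g=2$ strata via the branched double cover of $\CP^{1}$ and the connectedness of the configuration space of the $6$ branch points is also the standard and correct one.

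The genuine gap is the one you flag yourself: the upper bound, i.e.\ that $\calH(3,1)$, $\calH(2,1,1)$, $\calH(1,1,1,1)$ are connected and that the non-hyperelliptic loci of $\calH(4)$ and $\calH(2,2)$ do not split further. ``No invariant introduced so far can cut these strata, so they should be connected'' is not an argument --- the whole content of the theorem is that the known invariants are complete in genus $3$, and a priori there could be components indistinguishable by hyperellipticity and spin. Your proposed remedy (enumerate the extended Rauzy classes of irreducible permutations on at most $7$ letters and match the counts, using Veech's bijection between extended Rauzy classes and connected components) is a legitimate and standard way to close this, and the degeneration surgeries you mention are a reasonable cross-check, but as written neither is carried out: no enumeration is performed, no representative permutations are listed, and the surgery argument is not shown to preserve or detect the relevant invariants. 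Until that finite verification is actually done, the proposal establishes only that each of $\calH(4)$ and $\calH(2,2)$ has \emph{at least} two components and says nothing definitive about the connected strata, so the theorem is not yet proved.
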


\section{Square-tiled surfaces, permutation representatives and filling pair diagrams}

The main objects of study in this paper are square-tiled surfaces. In this section, we introduce these objects along with some associated structures that we will use throughout the paper. Moreover, we will prove a pair of lemmas that will be essential to the construction of 1,1-square-tiled surfaces in Section~\ref{con}.

\subsection{Square-tiled surfaces}

A translation surface is said to be a {\em square-tiled surface} if it is a branched cover of an Abelian differential on the square torus, branched over one point. The polygonal viewpoint for translation surfaces makes this terminology an obvious choice. Indeed, such a surface will be given by a collection of squares in the plane such that the top side (resp. left side) of each square is glued to the bottom side (resp. right side) of another square. See for example the translation surfaces in Figure~\ref{sts}. The minimum number of squares required for a square-tiled surface in the stratum $\calH(k_{1},\ldots,k_{n})$, with $\sum k_{i} = 2g-2$, is $2g+n-2$. Indeed, if we have $s$ squares, then each square contributes 4 sides and so, after identifying sides in pairs, we have a total of $2s$ edges. We must also have at least $n$ vertices after identification and so we see that
\[2-2g = V - E + F \geq n - 2s + s \Rightarrow s \geq 2g+n-2.\]

\begin{figure}[H]
\begin{center}
\begin{tikzpicture}[scale = 1.25]
\draw [line width = 0.3mm, line cap = round] (0,0)--node[left]{0}(0,1);
\draw [line width = 0.3mm, line cap = round] (0,1)--node[above]{1}(1,1);
\draw [line width = 0.3mm, line cap = round] (1,1)--node[above]{2}(2,1);
\draw [line width = 0.3mm, line cap = round] (2,1)--node[above]{3}(3,1);
\draw [line width = 0.3mm, line cap = round] (3,1)--node[right]{0}(3,0);
\draw [line width = 0.3mm, line cap = round] (0,0)--node[below]{3}(1,0);
\draw [line width = 0.3mm, line cap = round] (1,0)--node[below]{2}(2,0);
\draw [line width = 0.3mm, line cap = round] (2,0)--node[below]{1}(3,0);
\draw [densely dashed] (1,0)--(1,1);
\draw [densely dashed] (2,0)--(2,1);
\draw [color = red, line width = 0.3mm] (0.5,0)--(0.5,1);
\draw [color = red, line width = 0.3mm] (2.5,0)--(2.5,1);
\draw [color = green, line width = 0.3mm] (1.5,0)--(1.5,1);
\draw [color = blue, line width = 0.3mm] (0,0.5)--(3,0.5);

\draw [line width = 0.3mm, line cap = round] (5,0)--node[left]{0}(5,1);
\draw [line width = 0.3mm, line cap = round] (5,1)--node[above]{1}(6,1);
\draw [line width = 0.3mm, line cap = round] (6,1)--node[above]{1'}(7,1);
\draw [line width = 0.3mm, line cap = round] (7,1)--node[above]{2}(8,1);
\draw [line width = 0.3mm, line cap = round] (8,1)--node[above]{3}(9,1);
\draw [line width = 0.3mm, line cap = round] (9,1)--node[right]{0}(9,0);
\draw [line width = 0.3mm, line cap = round] (9,0)--node[below]{1'}(8,0);
\draw [line width = 0.3mm, line cap = round] (8,0)--node[below]{1}(7,0);
\draw [line width = 0.3mm, line cap = round] (7,0)--node[below]{2}(6,0);
\draw [line width = 0.3mm, line cap = round] (6,0)--node[below]{3}(5,0);
\draw [densely dashed] (6,0)--(6,1);
\draw [densely dashed] (7,0)--(7,1);
\draw [densely dashed] (8,0)--(8,1);
\draw [color = red, line width = 0.3mm] (5.5,0)--(5.5,1);
\draw [color = red, line width = 0.3mm] (6.5,0)--(6.5,1);
\draw [color = red, line width = 0.3mm] (7.5,0)--(7.5,1);
\draw [color = red, line width = 0.3mm] (8.5,0)--(8.5,1);
\draw [color = blue, line width = 0.3mm] (5,0.5)--(9,0.5);
\end{tikzpicture}
\end{center}
\caption[Two square-tiled surfaces in $\calH(2)$.]{Two square-tiled surfaces in $\calH(2)$ with a single horizontal cylinder. The surface on the right also has a single vertical cylinder while the one on the left has two vertical cylinders.}\label{sts}
\end{figure}
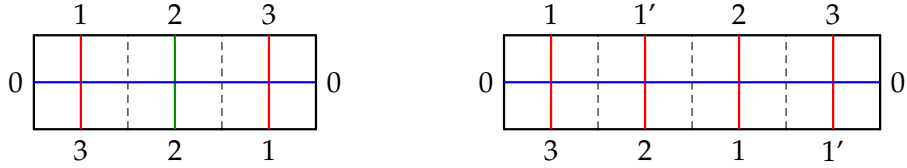

One important piece of combinatorial data for a square-tiled surface is the number of maximal, flat horizontal or vertical cylinders. A {\em cylinder} is a maximal embedded annulus in the surface, not containing any singularities in its interior. For example, the curves between the sides labelled by 0s in Figure~\ref{sts} are the core curves of the horizontal cylinders of the surfaces. One can also see that the surface on the left also has two vertical cylinders while the one on the right has a single vertical cylinder. If a square-tiled surface has a single vertical cylinder and a single horizontal cylinder then we shall call it a {\em 1,1-square-tiled surface}. Since we are interested in constructing 1,1-square-tiled surfaces using the minimum possible number of squares, the vertical and horizontal cylinders will both have height one.

The process of splitting a pair of identified sides into two and identifying them as before adds a marked point, a zero of order zero, to the translation surface. Observe that the surface on the right of Figure~\ref{sts} is obtained from the surface on the left by performing such an operation on the sides labelled 1. That is, we split the side labelled 1 into two side labelled by 1 and 1'. This does not change the connected component of the surface and we will make use of this technique when adding squares to hyperelliptic square-tiled surfaces in Subsection~\ref{hyp}.

We briefly recall an algebraic way of describing square-tiled surfaces. Firstly, we number each square in the surface from 1 to $n$. We then define two elements $h$ and $v$ of the symmetric group $\Sigma_{n}$ as follows. The image of $i$ under the element $h$ is the number of the square that is glued to the right of the square numbered $i$. The image of $i$ under the element $v$ is the number of the square that is glued to the top of the square numbered $i$. The square-tiled surface is then represented by the pair $(h,v)$ up to simultaneous conjugation of $h$ and $v$, and the information about the stratum of the surface is contained in the commutator $[h,v]=hvh^{-1}v^{-1}$. That is, if $[h,v]$ is a product of disjoint $(k_{i}+1)$-cycles, $1\leq i\leq n$, then the square-tiled surface lies in the stratum $\calH(k_{1},\ldots,k_{n})$. For example, the surface in $\calH(2)$ on the left of Figure~\ref{sts} is represented by the pair $(h,v)$, where $h = (1,2,3)$ and $v = (1,3)(2)$, and we see that $[h,v] = (1,3,2)$ is a 3-cycle. We see here again that the minimum number of squares required for a square-tiled surface in $\calH(k_{1},\ldots,k_{n})$ is $2g+n-2$. Indeed, the number of squares in such a surface is at least the size of the support of the commutator which is $\Sigma(k_{i}+1) = 2g+n-2$.

It is this connection to the symmetric group that was utilised by Eskin-Okounkov in their asymptotic counts. We remark that while the counting of square-tiled surfaces having only a single horizontal cylinder is well suited to the representation theory of the symmetric group the counting of 1,1-square-tiled surfaces is not. Indeed, in the former case, and by the above algebraic description, a square-tiled surface having one horizontal cylinder and lying in a particular stratum corresponds to finding two $n$-cycles whose product lies in a specified conjugacy class. In the latter case, however, we are required to find two $n$-cycles whose commutator lies in a particular conjugacy class.

We discussed the algebraic representation of square-tiled surfaces here for completeness. In the sequel, our results will be stated and proved using the permutation representatives introduced in the next subsection. There we also describe for square-tiled surfaces with a single horizontal cylinder how to relate the two notions.

\subsection{Permutation representatives}\label{permreps}

An {\em interval exchange transformation} is a self map of the interval that divides the interval into subintervals and then permutes them. Consider the translation surface given in Figure~\ref{IET}. The first return map to the horizontal transversal $T$ under the upwards vertical flow on the surface induces an interval exchange transformation on $T$ whose permutation is
\begin{equation}\label{iet}
\Pi = \left(\begin{matrix}
0&1&2&3&4 \\
4&3&1&2&0
\end{matrix}\right).
\end{equation}
That is, under the upwards vertical flow, the interval on $T$ lying below the side labelled 0 returns in position 4 (counting from the left and starting at 0), and so on. In general, the interval below side $i$ returns in position $\Pi^{-1}(i)$. For more details on the connections between translation surfaces and interval exchange transformations we direct the reader to the survey of Yoccoz~\cite{Y}.

The {\em extended Rauzy class} of this permutation is a class of permutations related under an induction method for interval exchange transformations introduced by Rauzy \cite{R}. Another choice of transversal will give an interval exchange transformation whose permutation lies in the extended Rauzy class of permutation (\ref{iet}). Conversely, any translation surface obtained as a suspension of an interval exchange transformation whose permutation lies in the same extended Rauzy class as permutation (\ref{iet}) will lie in the same connected component of a stratum as the translation surface in Figure~\ref{IET}. Indeed, Veech showed that extended Rauzy classes are in one-to-one correspondence with the connected components of strata \cite{V1}. As such, any choice of permutation in an extended Rauzy class will be called a {\em permutation representative} of the stratum component containing the associated translation surface.

\begin{figure}[H]
\begin{center}
\begin{tikzpicture}[scale = 0.8]
\draw [line width = 0.3mm, line cap = round] (0,0)--(1,3);
\draw (0.5-0.3,1.5+0.1) node{0};
\draw [line width = 0.3mm, line cap = round] (1,3)--(2,3.2);
\draw (1.5,3.5) node{1};
\draw [line width = 0.3mm, line cap = round] (2,3.2)--(3.4,2);
\draw (2.9,2.9) node{2};
\draw [line width = 0.3mm, line cap = round] (3.4,2)--(4.5,3);
\draw (3.95-0.25,2.5+0.275) node{3};
\draw [line width = 0.3mm, line cap = round] (4.5,3)--(6,1);
\draw (0.75+0.3+4.5,-1+0.225+3) node{4};
\draw [line width = 0.3mm, line cap = round] (6,1)--(5,-2);
\draw (0.5+0.3+5,1.5-0.1-2) node{0};
\draw [line width = 0.3mm, line cap = round] (2.6,-1)--(3.6,-0.8);
\draw (3.1,-1.3) node{1};
\draw [line width = 0.3mm, line cap = round] (5,-2)--(3.6,-0.8);
\draw (4.1,-1.7) node{2};
\draw [line width = 0.3mm, line cap = round] (2.6,-1)--(1.5,-2);
\draw (3.95+0.25-1.9,2.5-0.275-4) node{3};
\draw [line width = 0.3mm, line cap = round] (1.5,-2)--(0,0);
\draw (0.75-0.3,-1-0.225) node{4};
\draw (0,0)--(6,0);
\draw (6,0) node[right]{$T$};
\draw [densely dashed] (1,0)--(1,3);
\draw [densely dashed] (2,0)--(2,3.2);
\draw [densely dashed] (3.4,0)--(3.4,2);
\draw [densely dashed] (4.5,0)--(4.5,3);
\draw [densely dashed] (5,0)--(5,-2);
\draw [densely dashed] (3.6,0)--(3.6,-0.8);
\draw [densely dashed] (2.6,0)--(2.6,-1);
\draw [densely dashed] (1.5,0)--(1.5,-2);
\end{tikzpicture}
\end{center}
\caption[The first return map to the horizontal transversal $T$.]{The first return map to the horizontal transversal $T$ under the vertical flow induces an interval exchange transformation.}\label{IET}
\end{figure}
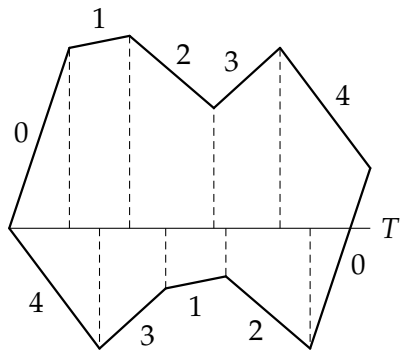

If the first symbol of the top row of a permutation representative is equal to the last symbol of the bottom row, then it is possible to construct a translation surface in the associated stratum component having a single horizontal cylinder. Zorich constructed permutation representatives of this form for every connected component of every stratum of Abelian and quadratic differentials \cite{Z}.

We will be interested in the construction of a square-tiled surface from such a permutation representative. To do this, one takes a line of squares of length one less than the number of symbols in the permutation representative and labels the left and right sides of this line of squares with the first symbol in the top row and last symbol of the bottom row, respectively. From our assumption, these symbols are the same and so the resulting square-tiled surface will have a single horizontal cylinder. We then label the top sides (resp. bottom sides) with the remaining symbols from the top row (resp. bottom row). For example, the square-tiled surface on the right of Figure~\ref{sts} is the one obtained, up to a relabelling, by performing this construction using permutation ({\ref{iet}) and we see that it does have a single horizontal cylinder, as claimed. The surface on the left of Figure~\ref{sts} can be obtained from the permutation
\begin{equation}\label{iet2}
\begin{pmatrix}
0&1&2&3 \\
3&2&1&0
\end{pmatrix}.
\end{equation}
From here on, we will call a square-tiled surface constructed in this manner the square-tiled surface represented by the associated permutation representative.

As briefly mentioned above, we remark that attempting to use Rauzy moves to search the extended Rauzy classes of these permutations for permutations representing 1,1-square-tiled surfaces is not a feasible method for solving our problem. Indeed, Delecroix showed that the cardinality of extended Rauzy classes increases in such a way that this task would be incredibly computationally intensive \cite{D}. Moreover, it is unlikely that examples for different strata could be found using similar sequences of Rauzy moves, and so a general proof of this nature would be difficult to find.

If we have a square-tiled surface with a single horizontal cylinder constructed as above, then, assuming the 0s to be the first symbol of the top row and last symbol of the bottom row, information concerning the vertical cylinders is contained in the permutation obtained by removing the 0s from each row of the permutation representative. Indeed, if the vertical cylinders have height one, then the number of vertical cylinders of the surface is equal to the number of cycles of this permutation. For example, under this modification, permutation (\ref{iet2}) becomes the permutation $(1,3)(2)$ and indeed the surface on the left of Figure~\ref{sts} has two vertical cylinders. Since we are interested in 1,1-square-tiled surfaces with a minimal number of squares, we will want the vertical cylinder to have height one, and so want this modified permutation to be a cyclic permutation. Indeed, for permutation (\ref{iet}) we obtain $(1,4,2,3)$ and it can be checked that the surface on the right of Figure~\ref{sts} does indeed have a single vertical cylinder. In fact, what we have just described is the following procedure: if we remove the 0s from a permutation representative $\pi$ of a square-tiled surface with one horizontal cylinder to obtain the permutation $\overline{\pi}$, then the square-tiled surface is represented algebraically by $(h,v)$ with $h = (1,2,\ldots,n)$ and $v = \overline{\pi}^{-1}$. This process is reversible in the obvious way.

An algorithm that can be used to determine the spin parity of a translation surface associated to a given permutation representative is described by Zorich~\cite[Appendix C]{Z}. This algorithm uses the permutation representative to calculate the intersection matrix of a set of generating cycles for the homology of the surface and for which the index of each cycle is known. One then performs linear algebra to reduce this set to a basis and the Arf invariant is then calculated. By the discussion in the previous paragraph, this intersection matrix and set of generating cycles can also be obtained from the vertical permutation $v$ of a square-tiled surface $(h,v)$ having a single horizontal cylinder. The linear algebra then follows as before. In general, the permutations $h$ and $v$ are not well adapted to handling the homology of the square-tiled surface and so there is not a simple way to derive the spin parity of the surface from these permutations alone. It is also easy to calculate the spin parity of a permutation representative using the \texttt{.arf\_invariant()} method for permutation representatives available inside the \texttt{surface\_dynamics}~\cite{D1} package of SageMath~\cite{sage}.

Note that adding a marked point to a side represented by label $x$, as described in the previous subsection, corresponds to adding a label to the right of $x$ in both rows of the permutation representative.

\subsection{Filling pair diagrams}\label{fpd}

On the surface $S$ of genus $g$, a pair of essential simple closed curves $\alpha$ and $\beta$ which are in minimal position, that is $i(\alpha,\beta):=\min_{\gamma\in[\alpha]}|\gamma\cap\beta|=|\alpha\cap\beta|$, are said to be a {\em filling pair} if their complement is a disjoint union of disks. We note that the core curves of the vertical and horizontal cylinders of a 1,1-square-tiled surface form a filling pair on that surface. Since we have an Abelian differential, all intersections occur with the same orientation. Moreover, each complementary region is a $4k$-gon and corresponds to a zero of order $k-1$ of the associated Abelian differential.

Conversely, given a filling pair on the surface $S$ of genus $g$ whose intersections all occur with the same orientation, the dual complex of the filling pair is a square complex and we can realise the surface as a collection of squares in the plane with sides identified by translations, in other words, as a square-tiled surface. As above, each complementary region with $4k$ sides will give rise to a zero of the Abelian differential of order $k-1$.

With this correspondence in mind, we can form a ribbon graph from the vertical and horizontal cylinders of a 1,1-square-tiled surface. We shall call the underlying oriented combinatorial graph a {\em filling pair diagram} for the associated square-tiled surface.

\begin{figure}[H]
\begin{center}
{\footnotesize
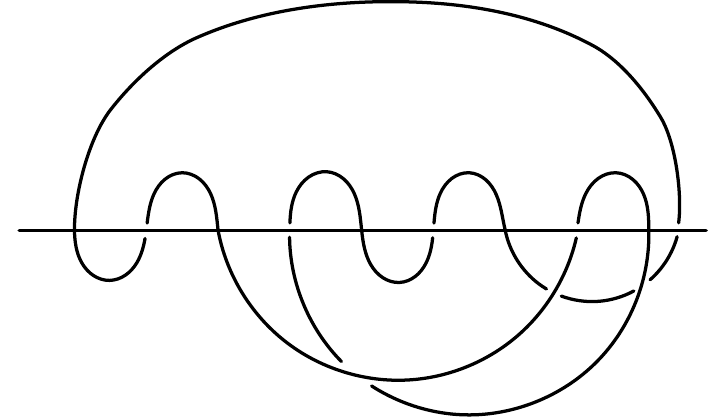
\vspace{0.5cm}
}
\end{center}
\caption{A filling pair diagram corresponding to a square-tiled surface represented by permutation (\ref{fpdperm}).}\label{fpdex}
\end{figure}

We will explain this construction by means of an example. Indeed, consider the 1,1-square-tiled surface in $\calH(4)$ with permutation representative
\begin{equation}\label{fpdperm}
\left(\begin{matrix}
0&1&2&3&4&5 \\
2&5&4&1&3&0
\end{matrix}\right).
\end{equation}
We first draw a horizontal line corresponding to the horizontal cylinder. Note that we will think of the ends being identified even though we do not join them in the diagram. We then add one vertex to the line for every square in the surface, equivalently for every non-zero symbol in the permutation representative. We then add an edge joining the bottom of the vertex corresponding to label $x$ to the top of the vertex corresponding to the label $y$ if and only if $y$ lies below $x$ in the permutation obtained by removing the 0s. The concatenation of these edges represents the vertical cylinder on the surface. The filling pair diagram associated to permutation (\ref{fpdperm}) is shown in Figure~\ref{fpdex}. Note that, given a filling pair diagram drawn as above, the reverse of this process allows us to easily construct the permutation representative. In Figure~\ref{fpdex} we have added the labels from the permutation representative to clarify the construction. In the remainder of this paper, we will not add the labels to the vertices of filling pair diagrams.

An alternative description of a filling pair diagram is afforded by the algebraic description of a square-tiled surface. In this case, the filling pair diagram associated to the 1,1-square-tiled surface $(h,v)$ can be described as the graph with vertex set $\{1,2,\ldots,n\}$ and edges $(i,h(i))$ and $(i,v(i))$ for all $1\leq i\leq n$, where we choose to draw the edges determined by $h$ as a horizontal line.

Taking a regular neighbourhood of the filling pair diagram gives a ribbon graph with one boundary component for every complementary region of the filling pair. We can construct the square-tiled surface by gluing a saddle with cone-angle $2k\pi$ onto every boundary component with $4k$ sides. Indeed, the ribbon graph obtained from the filling pair diagram in Figure~\ref{fpdex} has a single boundary component with 20 sides and so we glue in a saddle with cone-angle $10\pi = (4+1)2\pi$. As such, the square-tiled surface will have a single zero of order 4. This agrees with the fact that the permutation representative corresponded to a surface in $\calH(4)$.

\subsection{Combination lemmas}\label{CL}

We now provide the combination lemmas that will be crucial to the construction of 1,1-square-tiled surfaces in the next section. The first lemma describes how to combine two 1,1-square-tiled surfaces to produce a single 1,1-square-tiled surface of higher complexity. The second lemma describes how the parity of the spin structure of a surface built in this way depends on the parities of the spin structures of the constituent surfaces.

Before proving the first lemma we will demonstrate the construction through an example. First, consider the permutation
\[\left(\begin{matrix}
0&1&2&3&4&5 \\
2&5&4&1&3&0
\end{matrix}\right)\]
representing a 1,1-square-tiled surface in $\calH(4)$ with the minimum number of squares. We will describe the process of combining this surface with itself, as in Lemma ~\ref{comb1} below, to produce a 1,1-square-tiled surface in $\calH(4,4)$. The key property we want for this construction is that, on the square-tiled surface represented by this permutation, the bottom of the first square is identified with the top of the second. This can be seen in the permutation if the top row starts 0,1,2, and the second row starts with a 2.

\begin{center}
\begin{figure}[H]
\begin{center}
\begin{tikzpicture}[scale = 1.25]
\draw [line width = 0.3mm, line cap = round] (0,0)--node[left]{0}(0,1);
\draw [line width = 0.3mm, line cap = round] (0,1)--node[above]{1}(1,1);
\draw [line width = 0.3mm, line cap = round] (1,1)--node[above]{2}(2,1);
\draw [line width = 0.3mm, line cap = round] (2,1)--node[above]{3}(3,1);
\draw [line width = 0.3mm, line cap = round] (3,1)--node[above]{4}(4,1);
\draw [line width = 0.3mm, line cap = round] (4,1)--node[above]{5}(5,1);
\draw [line width = 0.3mm, line cap = round] (5,1)--node[right]{0}(5,0);
\draw [line width = 0.3mm, line cap = round] (5,0)--node[below]{3}(4,0);
\draw [line width = 0.3mm, line cap = round] (4,0)--node[below]{1}(3,0);
\draw [line width = 0.3mm, line cap = round] (3,0)--node[below]{4}(2,0);
\draw [line width = 0.3mm, line cap = round] (2,0)--node[below]{5}(1,0);
\draw [line width = 0.3mm, line cap = round] (1,0)--node[below]{2}(0,0);
\draw [densely dashed](1,0)--(1,1);
\draw [densely dashed](2,0)--(2,1);
\draw [densely dashed](3,0)--(3,1);
\draw [densely dashed](4,0)--(4,1);
\draw [line width = 0.3mm, color = red] (0,0.5)--(5,0.5);
\draw [line width = 0.3mm, color = blue] (0.5,0)--(0.5,1);
\draw [line width = 0.3mm, color = blue] (1.5,0)--(1.5,1);
\draw [line width = 0.3mm, color = blue] (2.5,0)--(2.5,1);
\draw [line width = 0.3mm, color = blue] (3.5,0)--(3.5,1);
\draw [line width = 0.3mm, color = blue] (4.5,0)--(4.5,1);
\draw [line width = 0.3mm, color = green] (0,0) -- (1,1);
\foreach \x in {0,1,2,3,4,5}
	{\node[color = blue] at (\x,0) {$\bullet$};}
\foreach \x in {0,1,2,3,4,5}
	{\node[color = blue] at (\x,1) {$\bullet$};}

\draw [line width = 0.3mm, line cap = round] (0+6,0)--node[left]{0'}(0+6,1);
\draw [line width = 0.3mm, line cap = round] (0+6,1)--node[above]{1'}(1+6,1);
\draw [line width = 0.3mm, line cap = round] (1+6,1)--node[above]{2'}(2+6,1);
\draw [line width = 0.3mm, line cap = round] (2+6,1)--node[above]{3'}(3+6,1);
\draw [line width = 0.3mm, line cap = round] (3+6,1)--node[above]{4'}(4+6,1);
\draw [line width = 0.3mm, line cap = round] (4+6,1)--node[above]{5'}(5+6,1);
\draw [line width = 0.3mm, line cap = round] (5+6,1)--node[right]{0'}(5+6,0);
\draw [line width = 0.3mm, line cap = round] (5+6,0)--node[below]{3'}(4+6,0);
\draw [line width = 0.3mm, line cap = round] (4+6,0)--node[below]{1'}(3+6,0);
\draw [line width = 0.3mm, line cap = round] (3+6,0)--node[below]{4'}(2+6,0);
\draw [line width = 0.3mm, line cap = round] (2+6,0)--node[below]{5'}(1+6,0);
\draw [line width = 0.3mm, line cap = round] (1+6,0)--node[below]{2'}(0+6,0);
\draw [densely dashed](1+6,0)--(1+6,1);
\draw [densely dashed](2+6,0)--(2+6,1);
\draw [densely dashed](3+6,0)--(3+6,1);
\draw [densely dashed](4+6,0)--(4+6,1);
\draw [line width = 0.3mm, color = red] (6,0.5)--(11,0.5);
\draw [line width = 0.3mm, color = blue] (6.5,0)--(6.5,1);
\draw [line width = 0.3mm, color = blue] (7.5,0)--(7.5,1);
\draw [line width = 0.3mm, color = blue] (8.5,0)--(8.5,1);
\draw [line width = 0.3mm, color = blue] (9.5,0)--(9.5,1);
\draw [line width = 0.3mm, color = blue] (10.5,0)--(10.5,1);
\draw [line width = 0.3mm, color = green] (6,0) -- (7,1);
\foreach \x in {0,1,2,3,4,5}
	{\node[color = red] at (6+\x,0) {$\blacklozenge$};}
\foreach \x in {0,1,2,3,4,5}
	{\node[color = red] at (6+\x,1) {$\blacklozenge$};}
\end{tikzpicture}
\end{center}
\begin{center}
\begin{tikzpicture}[scale = 1.25]
\draw [line width = 0.3mm, line cap = round] (0,0)--node[left]{0'}(0,1);
\draw [line width = 0.3mm, line cap = round] (0,1)--node[above]{1'}(1,1);
\draw [line width = 0.3mm, line cap = round] (1,1)--node[above]{2}(2,1);
\draw [line width = 0.3mm, line cap = round] (2,1)--node[above]{3}(3,1);
\draw [line width = 0.3mm, line cap = round] (3,1)--node[above]{4}(4,1);
\draw [line width = 0.3mm, line cap = round] (4,1)--node[above]{5}(5,1);
\draw [line width = 0.3mm, line cap = round] (5,0)--node[below]{3}(4,0);
\draw [line width = 0.3mm, line cap = round] (4,0)--node[below]{1}(3,0);
\draw [line width = 0.3mm, line cap = round] (3,0)--node[below]{4}(2,0);
\draw [line width = 0.3mm, line cap = round] (2,0)--node[below]{5}(1,0);
\draw [line width = 0.3mm, line cap = round] (1,0)--node[below]{2}(0,0);
\draw [line width = 0.3mm, line cap = round] (0+5,1)--node[above]{1}(1+5,1);
\draw [line width = 0.3mm, line cap = round] (1+5,1)--node[above]{2'}(2+5,1);
\draw [line width = 0.3mm, line cap = round] (2+5,1)--node[above]{3'}(3+5,1);
\draw [line width = 0.3mm, line cap = round] (3+5,1)--node[above]{4'}(4+5,1);
\draw [line width = 0.3mm, line cap = round] (4+5,1)--node[above]{5'}(5+5,1);
\draw [line width = 0.3mm, line cap = round] (5+5,1)--node[right]{0'}(5+5,0);
\draw [line width = 0.3mm, line cap = round] (5+5,0)--node[below]{3'}(4+5,0);
\draw [line width = 0.3mm, line cap = round] (4+5,0)--node[below]{1'}(3+5,0);
\draw [line width = 0.3mm, line cap = round] (3+5,0)--node[below]{4'}(2+5,0);
\draw [line width = 0.3mm, line cap = round] (2+5,0)--node[below]{5'}(1+5,0);
\draw [line width = 0.3mm, line cap = round] (1+5,0)--node[below]{2'}(0+5,0);
\draw [densely dashed](1,0)--(1,1);
\draw [densely dashed](2,0)--(2,1);
\draw [densely dashed](3,0)--(3,1);
\draw [densely dashed](4,0)--(4,1);
\draw [densely dashed](5,0)--(5,1);
\draw [densely dashed](1+5,0)--(1+5,1);
\draw [densely dashed](2+5,0)--(2+5,1);
\draw [densely dashed](3+5,0)--(3+5,1);
\draw [densely dashed](4+5,0)--(4+5,1);
\draw [line width = 0.3mm, color = red] (0,0.5)--(10,0.5);
\draw [line width = 0.3mm, color = blue] (0.5,0)--(0.5,1);
\draw [line width = 0.3mm, color = blue] (1.5,0)--(1.5,1);
\draw [line width = 0.3mm, color = blue] (2.5,0)--(2.5,1);
\draw [line width = 0.3mm, color = blue] (3.5,0)--(3.5,1);
\draw [line width = 0.3mm, color = blue] (4.5,0)--(4.5,1);
\draw [line width = 0.3mm, color = blue] (5.5,0)--(5.5,1);
\draw [line width = 0.3mm, color = blue] (6.5,0)--(6.5,1);
\draw [line width = 0.3mm, color = blue] (7.5,0)--(7.5,1);
\draw [line width = 0.3mm, color = blue] (8.5,0)--(8.5,1);
\draw [line width = 0.3mm, color = blue] (9.5,0)--(9.5,1);
\draw [line width = 0.3mm, color = green] (0,0)--(1,1);
\draw [line width = 0.3mm, color = green] (5,0)--(6,1);
\foreach \x in {1,2,3,4,5}
	{\node[color = blue] at (\x,0) {$\bullet$};}
\foreach \x in {-6,0,1,2,3,4}
	{\node[color = red] at (6+\x,0) {$\blacklozenge$};}
\foreach \x in {2,3,4,5,6}
	{\node[color = blue] at (\x,1) {$\bullet$};}
\foreach \x in {-6,-5,1,2,3,4}
	{\node[color = red] at (6+\x,1) {$\blacklozenge$};}
\end{tikzpicture}
\end{center}
\caption{Example of the cylinder concatenation of two 1,1-square-tiled surfaces in $\calH(4)$.}\label{combex}
\end{figure}
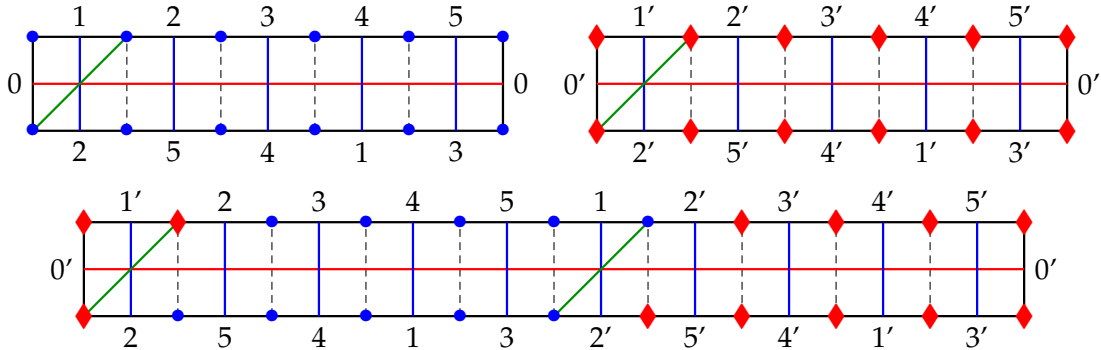
\end{center}

We first realise two copies of this surface as in the top of Figure~\ref{combex}. We then cut both surfaces open along the diagonal lines shown in Figure~\ref{combex} and glue the left-side of this slit on the one surface to the right-side of the slit on the other. This has the effect of concatenating the horizontal and vertical cylinders of the two surfaces. Indeed, we obtain the surface at the bottom of Figure~\ref{combex} which can be seen to be a 1,1-square-tiled surface. It can be checked that the surface lies in $\calH(4,4)$. After relabelling, the permutation representative for this surface is
\[\left(\begin{matrix}
0&1&2&3&4&5&6&7&8&9&10 \\
2&5&4&6&3&7&10&9&1&8&0
\end{matrix}\right).\]
The construction can easily be performed directly on the permutation representatives and it is also easy to see what the process involves for filling pair diagrams. In the following lemmas, we will call this process of combining surfaces {\em cylinder concatenation}.

We observe that the zeros of the constituent surfaces were preserved and that, by using 1,1-square-tiled surfaces with the minimal number of squares required for their respective strata, we obtained a 1,1-square-tiled surface with the minimum number of squares for its stratum. That this is true in general is the content of the following lemma.

\begin{lemma}\label{comb1}
Suppose the permutations
\[\left(\begin{matrix}
0&1&2&\cdots&\cdots& N \\
2& \cdots&\cdots& \cdots &\cdots & 0
\end{matrix}\right),\]
and
\[\left(\begin{matrix}
0'&1'&2'&\cdots&\cdots& M' \\
2'& \cdots&\cdots& \cdots &\cdots & 0'
\end{matrix}\right)\]
represent 1,1-square-tiled surfaces $S_{1}$ and $S_{2}$ in the strata $\calH_{g_{1}}(k_{1},\ldots,k_{n})$ and $\calH_{g_{2}}(l_{1},\ldots,l_{m})$, respectively. We assume only that they have first rows beginning 0, 1, 2 (resp. 0', 1', 2') and second rows beginning with 2 (resp. 2'). Then the 1,1-square-tiled surface $S$ obtained from these surfaces by the cylinder concatenation method lies in $\calH_{g_{1}+g_{2}-1}(k_{1},\ldots,k_{n},l_{1},\ldots,l_{m})$. Moreover, if $S_{1}$ and $S_{2}$ have the minimum number of squares for their respective strata then so does $S$.
\end{lemma}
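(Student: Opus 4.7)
The plan is to verify each of the three claims of the lemma by a direct analysis of the cut-and-glue operation underlying cylinder concatenation. By the hypothesis that both permutations begin $0,1,2$ on top and $2$ on the bottom, the bottom edge of square~$1$ of each $S_i$ (labelled $2$) is identified with the top edge of square~$2$ (also labelled $2$) via the translation $(x,0)\mapsto(x+1,1)$; in particular, the point $(0,0)$ of square~$1$ is identified on the surface with the point $(1,1)$. Hence the diagonal $\gamma_i$ from $(0,0)$ to $(1,1)$ in square~$1$ of $S_i$ is a closed saddle connection at a single vertex $p_i$, and cylinder concatenation produces $S$ from $S_1 \sqcup S_2$ by cutting along $\gamma_1$ and $\gamma_2$ and regluing the resulting slits with a left-right swap.

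From this geometric description, I would first write the combined permutation representative for $S$ explicitly in terms of the given ones; both rows of the combined permutation are obtained by specific splicings of the two original rows. The claim that $S$ is a $1,1$-square-tiled surface then reduces to two combinatorial checks. For the horizontal cylinder, the first entry of the combined top row and the last entry of the combined bottom row are both $0$, so $S$ has a single horizontal cylinder of length $N+M$. For the vertical cylinder, since $\gamma_i$ crosses the unique vertical core curve of $S_i$ exactly once (being the diagonal of a single square), the splicing merges the two vertical cycles into a single $(N+M)$-cycle; equivalently, the combined vertical permutation (obtained by deleting the $0$s) is a single cycle, which I would verify by direct tracing.

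To identify the stratum I would combine Euler characteristic with a local analysis at the singularities involved in the surgery. Cutting along $\gamma_i$ splits the singularity $p_i$ into two cone points whose cone angles sum to that of $p_i$, and the prescribed left-right swap regluing pairs the four resulting cone points in such a way that the two new singularities have cone angles equal to those of $p_1$ and $p_2$ individually. The hypothesis that each $\gamma_i$ is the diagonal of a single square, entering and exiting $p_i$ at the same $\pi/4$ direction, is precisely what forces this symmetric pairing. Consequently the zero orders of $S$ are exactly $k_1,\ldots,k_n,l_1,\ldots,l_m$, all other zeros of $S_1$ and $S_2$ being unaffected by the surgery. The genus is then forced to be $g_1+g_2-1$ by Euler characteristic, since $S$ has $N+M$ squares and its vertex set is the disjoint union of those of $S_1$ and $S_2$. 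As an alternative verification, one can compute the commutator $[h_S,v_S]$ of the combined horizontal and vertical permutations and check directly that its cycle type is the multiset $\{k_i+1\} \cup \{l_j+1\}$.

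For the minimality conclusion, if $S_1$ and $S_2$ have the minimum $2g_1+n-2$ and $2g_2+m-2$ squares respectively, then $S$ has $(2g_1+n-2)+(2g_2+m-2)=2(g_1+g_2-1)+(n+m)-2$ squares, which is exactly the minimum number of squares for a square-tiled surface in $\calH_{g_1+g_2-1}(k_1,\ldots,k_n,l_1,\ldots,l_m)$. I expect the main obstacle to be the local cone-angle analysis above, namely verifying that the four cone points produced by cutting $p_1$ and $p_2$ recombine into two cone points with precisely the original cone angles at $p_1$ and $p_2$ rather than some other compatible pair with the same total; the alternative algebraic verification via the commutator $[h_S,v_S]$ may offer a cleaner route, bypassing the geometric bookkeeping at the singularities.
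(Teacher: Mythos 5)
Your proposal is correct and follows essentially the same route as the paper: cut each surface along the slope-one diagonal of its first square, reglue with the left/right swap, observe that the horizontal and vertical cylinders are concatenated, check that the cone angles at the affected vertices are preserved, and conclude the genus by Euler characteristic and the minimality by adding the square counts. You in fact supply more detail than the paper on the one delicate point (that the four slit-corners recombine into singularities with exactly the original cone angles, because the diagonal cuts off a sector of angle exactly $\pi$ in each surface), which the paper dismisses as ``easily checked,'' and your alternative verification via the commutator $[h,v]$ is precisely the algebraic reformulation the paper records in the remark following the lemma.
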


\begin{proof}
Note that the surfaces $S_{1}$ and $S_{2}$ can be realised as in the top of Figure~\ref{GL1}. Since the sides labelled by 2 (resp. 2') are diagonally opposite, we can construct the diagonal green curves of slope 1 shown in each surface. The process of cylinder concatenation for these surfaces is the process of cutting each surface open along these diagonal curves and gluing the right side of the slit in each surface to the left side of the slit in the other surface. This action concatenates the cylinders as expected and so we do indeed produce a 1,1-square-tiled surface $S$. Moreover, it can be seen that the that the zeros of $S$ are exactly the union of the zeros of $S_{1}$ and $S_{2}$. Indeed, we need only the check that the angles around the vertices at the diagonal lines are preserved which is easily checked. By an Euler characteristic argument, the genus of $S$ is $g_{1}+g_{2}-1$. That is, $S$ lies in $\calH_{g_{1}+g_{2}-1}(k_{1},\ldots,k_{n},l_{1},\ldots,l_{m})$, as claimed.
\end{proof}

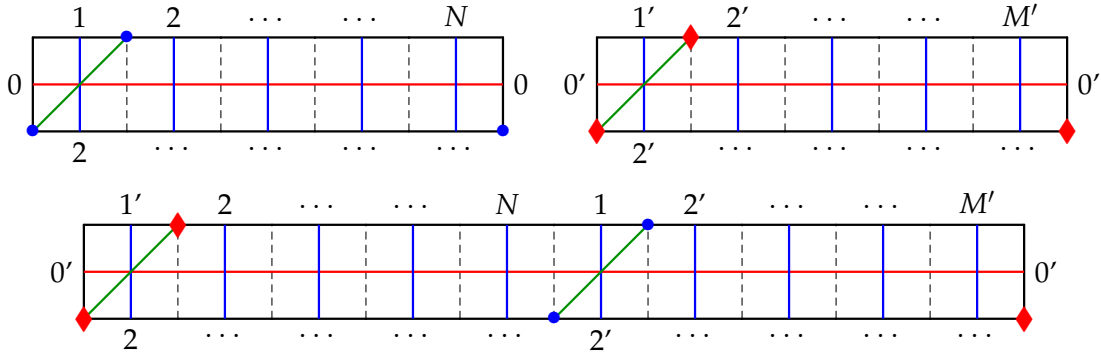
\begin{figure}[H]
\begin{center}
\begin{tikzpicture}[scale = 1.25]
\draw [line width = 0.3mm, line cap = round] (0,0)--node[left]{0}(0,1);
\draw [line width = 0.3mm, line cap = round] (0,1)--node[above]{1}(1,1);
\draw [line width = 0.3mm, line cap = round] (1,1)--node[above]{2}(2,1);
\draw [line width = 0.3mm, line cap = round] (2,1)--node[above]{$\cdots$}(3,1);
\draw [line width = 0.3mm, line cap = round] (3,1)--node[above]{$\cdots$}(4,1);
\draw [line width = 0.3mm, line cap = round] (4,1)--node[above]{$N$}(5,1);
\draw [line width = 0.3mm, line cap = round] (5,1)--node[right]{0}(5,0);
\draw [line width = 0.3mm, line cap = round] (5,0)--node[below]{$\cdots$}(4,0);
\draw [line width = 0.3mm, line cap = round] (4,0)--node[below]{$\cdots$}(3,0);
\draw [line width = 0.3mm, line cap = round] (3,0)--node[below]{$\cdots$}(2,0);
\draw [line width = 0.3mm, line cap = round] (2,0)--node[below]{$\cdots$}(1,0);
\draw [line width = 0.3mm, line cap = round] (1,0)--node[below]{2}(0,0);
\draw [densely dashed](1,0)--(1,1);
\draw [densely dashed](2,0)--(2,1);
\draw [densely dashed](3,0)--(3,1);
\draw [densely dashed](4,0)--(4,1);
\draw [line width = 0.3mm, color = red] (0,0.5)--(5,0.5);
\draw [line width = 0.3mm, color = blue] (0.5,0)--(0.5,1);
\draw [line width = 0.3mm, color = blue] (1.5,0)--(1.5,1);
\draw [line width = 0.3mm, color = blue] (2.5,0)--(2.5,1);
\draw [line width = 0.3mm, color = blue] (3.5,0)--(3.5,1);
\draw [line width = 0.3mm, color = blue] (4.5,0)--(4.5,1);
\draw [line width = 0.3mm, color = green] (0,0)--(1,1);
\foreach \x in {0,5}
	{\node[color = blue] at (\x,0) {$\bullet$};}
\foreach \x in {1}
	{\node[color = blue] at (\x,1) {$\bullet$};}

\draw [line width = 0.3mm, line cap = round] (0+6,0)--node[left]{0'}(0+6,1);
\draw [line width = 0.3mm, line cap = round] (0+6,1)--node[above]{1'}(1+6,1);
\draw [line width = 0.3mm, line cap = round] (1+6,1)--node[above]{2'}(2+6,1);
\draw [line width = 0.3mm, line cap = round] (2+6,1)--node[above]{$\cdots$}(3+6,1);
\draw [line width = 0.3mm, line cap = round] (3+6,1)--node[above]{$\cdots$}(4+6,1);
\draw [line width = 0.3mm, line cap = round] (4+6,1)--node[above]{$M'$}(5+6,1);
\draw [line width = 0.3mm, line cap = round] (5+6,1)--node[right]{0'}(5+6,0);
\draw [line width = 0.3mm, line cap = round] (5+6,0)--node[below]{$\cdots$}(4+6,0);
\draw [line width = 0.3mm, line cap = round] (4+6,0)--node[below]{$\cdots$}(3+6,0);
\draw [line width = 0.3mm, line cap = round] (3+6,0)--node[below]{$\cdots$}(2+6,0);
\draw [line width = 0.3mm, line cap = round] (2+6,0)--node[below]{$\cdots$}(1+6,0);
\draw [line width = 0.3mm, line cap = round] (1+6,0)--node[below]{2'}(0+6,0);
\draw [densely dashed](1+6,0)--(1+6,1);
\draw [densely dashed](2+6,0)--(2+6,1);
\draw [densely dashed](3+6,0)--(3+6,1);
\draw [densely dashed](4+6,0)--(4+6,1);
\draw [line width = 0.3mm, color = red] (6,0.5)--(11,0.5);
\draw [line width = 0.3mm, color = blue] (6.5,0)--(6.5,1);
\draw [line width = 0.3mm, color = blue] (7.5,0)--(7.5,1);
\draw [line width = 0.3mm, color = blue] (8.5,0)--(8.5,1);
\draw [line width = 0.3mm, color = blue] (9.5,0)--(9.5,1);
\draw [line width = 0.3mm, color = blue] (10.5,0)--(10.5,1);
\draw [line width = 0.3mm, color = green] (6,0)--(7,1);
\foreach \x in {0,5}
	{\node[color = red] at (6+\x,0) {$\blacklozenge$};}
\foreach \x in {1}
	{\node[color = red] at (6+\x,1) {$\blacklozenge$};}

\end{tikzpicture}
\end{center}
\begin{center}
\begin{tikzpicture}[scale = 1.25]
\draw [line width = 0.3mm, line cap = round] (0,0)--node[left]{0'}(0,1);
\draw [line width = 0.3mm, line cap = round] (0,1)--node[above]{1'}(1,1);
\draw [line width = 0.3mm, line cap = round] (1,1)--node[above]{2}(2,1);
\draw [line width = 0.3mm, line cap = round] (2,1)--node[above]{$\cdots$}(3,1);
\draw [line width = 0.3mm, line cap = round] (3,1)--node[above]{$\cdots$}(4,1);
\draw [line width = 0.3mm, line cap = round] (4,1)--node[above]{$N$}(5,1);
\draw [line width = 0.3mm, line cap = round] (5,0)--node[below]{$\cdots$}(4,0);
\draw [line width = 0.3mm, line cap = round] (4,0)--node[below]{$\cdots$}(3,0);
\draw [line width = 0.3mm, line cap = round] (3,0)--node[below]{$\cdots$}(2,0);
\draw [line width = 0.3mm, line cap = round] (2,0)--node[below]{$\cdots$}(1,0);
\draw [line width = 0.3mm, line cap = round] (1,0)--node[below]{2}(0,0);
\draw [line width = 0.3mm, line cap = round] (0+5,1)--node[above]{1}(1+5,1);
\draw [line width = 0.3mm, line cap = round] (1+5,1)--node[above]{2'}(2+5,1);
\draw [line width = 0.3mm, line cap = round] (2+5,1)--node[above]{$\cdots$}(3+5,1);
\draw [line width = 0.3mm, line cap = round] (3+5,1)--node[above]{$\cdots$}(4+5,1);
\draw [line width = 0.3mm, line cap = round] (4+5,1)--node[above]{$M'$}(5+5,1);
\draw [line width = 0.3mm, line cap = round] (5+5,1)--node[right]{0'}(5+5,0);
\draw [line width = 0.3mm, line cap = round] (5+5,0)--node[below]{$\cdots$}(4+5,0);
\draw [line width = 0.3mm, line cap = round] (4+5,0)--node[below]{$\cdots$}(3+5,0);
\draw [line width = 0.3mm, line cap = round] (3+5,0)--node[below]{$\cdots$}(2+5,0);
\draw [line width = 0.3mm, line cap = round] (2+5,0)--node[below]{$\cdots$}(1+5,0);
\draw [line width = 0.3mm, line cap = round] (1+5,0)--node[below]{2'}(0+5,0);
\draw [densely dashed](1,0)--(1,1);
\draw [densely dashed](2,0)--(2,1);
\draw [densely dashed](3,0)--(3,1);
\draw [densely dashed](4,0)--(4,1);
\draw [densely dashed](5,0)--(5,1);
\draw [densely dashed](1+5,0)--(1+5,1);
\draw [densely dashed](2+5,0)--(2+5,1);
\draw [densely dashed](3+5,0)--(3+5,1);
\draw [densely dashed](4+5,0)--(4+5,1);
\draw [line width = 0.3mm, color = red] (0,0.5)--(10,0.5);
\draw [line width = 0.3mm, color = blue] (0.5,0)--(0.5,1);
\draw [line width = 0.3mm, color = blue] (1.5,0)--(1.5,1);
\draw [line width = 0.3mm, color = blue] (2.5,0)--(2.5,1);
\draw [line width = 0.3mm, color = blue] (3.5,0)--(3.5,1);
\draw [line width = 0.3mm, color = blue] (4.5,0)--(4.5,1);
\draw [line width = 0.3mm, color = blue] (5.5,0)--(5.5,1);
\draw [line width = 0.3mm, color = blue] (6.5,0)--(6.5,1);
\draw [line width = 0.3mm, color = blue] (7.5,0)--(7.5,1);
\draw [line width = 0.3mm, color = blue] (8.5,0)--(8.5,1);
\draw [line width = 0.3mm, color = blue] (9.5,0)--(9.5,1);
\draw [line width = 0.3mm, color = green] (0,0)--(1,1);
\draw [line width = 0.3mm, color = green] (5,0)--(6,1);
\foreach \x in {5}
	{\node[color = blue] at (\x,0) {$\bullet$};}
\foreach \x in {-6,4}
	{\node[color = red] at (6+\x,0) {$\blacklozenge$};}
\foreach \x in {6}
	{\node[color = blue] at (\x,1) {$\bullet$};}
\foreach \x in {-5}
	{\node[color = red] at (6+\x,1) {$\blacklozenge$};}
\end{tikzpicture}
\end{center}
\caption{Realisation of the surfaces $S_{1}$, $S_{2}$ and $S$.}
\label{GL1}
\end{figure}

Note that the surface produced by this method has the necessary form to be a constituent surface; that is, the bottom of the first square is again identified with the top of the second. As such, this process can be iterated.

Dealing only with the information about the strata of the surfaces being used, Lemma~\ref{comb1} has the following analogue in terms of the algebraic description of square-tiled surfaces: Let $(h_{1},v_{1})$ and $(h_{2},v_{2})$ be 1,1-square-tiled surfaces in $\calH(k_{1},\ldots,k_{n})$ and $\calH(l_{1},\ldots,l_{m})$, respectively, and with $h_{1} = (1,2,\ldots,N)$, $h_{2} = (1',2',\ldots,M')$, $v_{1}^{-1}(1) = 2$, and $v_{2}^{-1}(1') = 2'$. Then the square-tiled surface surface $(h,v)$ with $h = (1,1')h_{1}h_{2}$ and $v = v_{1}v_{2}(1,1')$ represents a 1,1-square-tiled surface with $[h,v] = ([h_{1},v_{1}][h_{2},v_{2}])^{(1,1')}$ and so in particular lies in $\calH(k_{1},\ldots,k_{n},l_{1},\ldots,l_{m})$.

We now consider how the spin structures of 1,1-square-tiled surfaces behave under cylinder concatenation. Indeed, this is the content of the following lemma.

\begin{lemma}\label{comb2}
Let $S_{1}\in\calH_{g_{1}}(2k_{1},\ldots,2k_{n})$ and $S_{2}\in\calH_{g_{2}}(2l_{1},\ldots,2l_{m})$ be 1,1-square-tiled surfaces with the form necessary to apply Lemma~\ref{comb1}. Further assume that $S_{1}$ has spin parity $\epsilon$, and $S_{2}$ has spin parity $\eta$. Let $S\in\calH_{g_{1}+g_{2}-1}(2k_{1},\ldots,2k_{n},2l_{1},\ldots,2l_{m})$ be the 1,1-square-tiled surface obtained from $S_{1}$ and $S_{2}$ by applying Lemma~\ref{comb1}, then $S$ has spin parity
\[\epsilon+\eta+1\mod 2.\]
\end{lemma}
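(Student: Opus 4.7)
The plan is to compute $\mathrm{Arf}(\Omega_\omega)$ on $S$ using the formula recalled in Section~2, applied to a symplectic basis of $H_1(S,\Z_2)$ built compatibly from symplectic bases of $H_1(S_1,\Z_2)$ and $H_1(S_2,\Z_2)$. The geometric content is that the cylinder concatenation of Lemma~\ref{comb1} replaces two symplectic pairs (one from each $S_i$) by a single new pair in $S$, and the net contribution of this replacement to the Arf invariant works out to precisely $+1\pmod 2$.

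\textbf{Symplectic bases on $S_i$.} The hypothesis on the form of the permutation representatives forces the bottom of square~$1$ of $S_i$ to be identified with the top of square~$2$. In particular the corners $(0,0)$ and $(1,1)$ of square~$1$ of $S_i$ are identified, so the green diagonal $\gamma_i$ of Figure~\ref{GL1} is a simple closed curve on $S_i$. Let $\alpha_i$ denote the core of the horizontal cylinder of $S_i$. The curves $\gamma_i$ and $\alpha_i$ cross transversally exactly once, at the center of square~$1$, so $\widehat{i}(\gamma_i,\alpha_i)=1$ and $(\gamma_i,\alpha_i)$ extends to a symplectic basis
\[
\{\gamma_i,\alpha_i\}\cup\{a_k^{(i)},b_k^{(i)}\}_{k=2}^{g_i}
\]
of $H_1(S_i,\Z_2)$. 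Since $\gamma_i$ is non-separating (as is implicit in the genus computation of Lemma~\ref{comb1}), we may assume the remaining representatives are supported in the complement of a neighbourhood of $\gamma_i$.

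\textbf{Symplectic basis on $S$.} Under the cylinder concatenation, $\gamma_1$ and $\gamma_2$ are glued into a single simple closed curve $\gamma$ in $S$, realised as the diagonal of the first square of $S$. The horizontal cores $\alpha_1,\alpha_2$ are cut open by the surgery and reattach into the horizontal core $\alpha$ of $S$, and again $\widehat{i}(\gamma,\alpha)=1$. The representatives of $\{a_k^{(i)},b_k^{(i)}\}_{k=2}^{g_i}$ lie in the complement of $\gamma_i$ and so descend unchanged to cycles in $S$. A direct check confirms that
\[
\{\gamma,\alpha\}\cup\{a_k^{(1)},b_k^{(1)}\}_{k=2}^{g_1}\cup\{a_k^{(2)},b_k^{(2)}\}_{k=2}^{g_2}
\]
is a symplectic basis of $H_1(S,\Z_2)$, of the correct size $2(g_1+g_2-1)$: pairings among descended classes are inherited from $S_1,S_2$, and any intersection of a descended class with $\gamma$ or $\alpha$ occurs inside the preserved $S_i$-piece of $S$ and agrees with the corresponding intersection in $S_i$.

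\textbf{Index computation and main obstacle.} Each of $\gamma_i,\alpha_i,\gamma,\alpha$ is a closed straight geodesic with constant tangent direction, and hence has index $0$; the surviving classes inherit their indices from $S_i$. Writing $\Sigma_i\equiv\sum_{k=2}^{g_i}(\ind(a_k^{(i)})+1)(\ind(b_k^{(i)})+1)\pmod 2$, the Arf invariant formula on $S_i$ yields $\mathrm{Arf}(S_i)\equiv(\ind(\gamma_i)+1)(\ind(\alpha_i)+1)+\Sigma_i\equiv 1+\Sigma_i\pmod 2$, so $\Sigma_i\equiv\mathrm{Arf}(S_i)+1$. Applying the formula on $S$,
\[
\mathrm{Arf}(S)\equiv(\ind(\gamma)+1)(\ind(\alpha)+1)+\Sigma_1+\Sigma_2\equiv 1+(\epsilon+1)+(\eta+1)\equiv\epsilon+\eta+1\pmod 2,
\]
as required. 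The main technical obstacle is verifying that the descended classes from $S_i$ are symplectically orthogonal to the \emph{new} pair $(\gamma,\alpha)$ in $S$ (rather than merely to $(\gamma_i,\alpha_i)$ in $S_i$); this reduces to the fact that all intersections in question occur inside a preserved $S_i$-piece, where the flat metric and combinatorial structure agree with those of $S_i$, but making this precise requires careful tracking of how the cut-and-glue along the two sides of $\gamma_i$ interacts with the basis curves.
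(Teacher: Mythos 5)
Your proposal is correct and follows essentially the same route as the paper: both choose the horizontal core curve and the slope-1 diagonal through the first square as a symplectic pair of index-zero curves on each $S_i$ and on $S$, complete to symplectic bases with the remaining representatives carried over unchanged, and read off the Arf invariant as $1+(\epsilon+1)+(\eta+1)\equiv\epsilon+\eta+1\pmod 2$. The orthogonality issue you flag at the end is handled in the paper at the same level of detail (by taking the completing curves supported away from the concatenation locus), so there is no substantive difference.
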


\begin{proof}
Consider $S_{1}$ and $S_{2}$ as in Figure~\ref{GL1}. In each surface, we can choose the core curves of the horizontal cylinders and the green curves of slope 1 to form symplectic pairs $\{\alpha_{1},\beta_{1}\}$ and $\{\gamma_{1},\delta_{1}\}$, respectively. Both curves in each pair have constant angle with respect to the horizontal direction and so both have index 0. As such, we have
\[(\ind(\alpha_{1})+1)(\ind(\beta_{1})+1) = 1 = (\ind(\gamma_{1})+1)(\ind(\delta_{1})+1).\]
If the sets of curves $\{\alpha_{2},\beta_{2},\ldots,\alpha_{g_{1}},\beta_{g_{1}}\}$ and $\{\gamma_{2},\delta_{2},\ldots,\gamma_{g_{2}},\delta_{g_{2}}\}$ complete a symplectic basis on each surface, then we must have
\[\sum_{i = 2}^{g_{1}}(\ind(\alpha_{i})+1)(\ind(\beta_{i})+1) \equiv \epsilon  - 1 \mod 2,\]
and
\[\sum_{i = 2}^{g_{2}}(\ind(\gamma_{i})+1)(\ind(\delta_{i})+1) \equiv \eta  - 1 \mod 2.\]

In $S$, we can again choose the horizontal core curve and the green curve of slope 1 to form a symplectic pair $\{\mu,\nu\}$ satisfying $(\ind(\mu)+1)(\ind(\nu)+1) = 1$. A symplectic basis can then be completed by further taking the union of $\{\alpha_{2},\beta_{2},\ldots,\alpha_{g_{1}},\beta_{g_{1}}\}$ and $\{\gamma_{2},\delta_{2},\ldots,\gamma_{g_{2}},\delta_{g_{2}}\}$. Each curve will have the same index on $S$ as it did on $S_{1}$ or $S_{2}$, respectively. Hence, we see that the spin parity of $S$ is
\[1 + (\epsilon - 1) + (\eta - 1) \equiv \epsilon+\eta+1\mod 2,\]
as claimed.
\end{proof}

Unlike Lemma~\ref{comb1}, this lemma does not have a form that is easily stated in terms of the algebraic description of a square-tiled surface.

Returning to the example we gave above, one can check that the permutation we combined represented a surface in $\odd(4)$ and that the resulting permutation represents a surface in $\odd(4,4)$, as we would expect from Lemma~\ref{comb2}. Constructions like this will allow us to use a number of constituent surfaces to build 1,1-square-tiled surfaces in the desired connected components of general strata.


\section{Construction of 1,1-square-tiled surfaces}\label{con}

In this section we construct 1,1-square-tiled surfaces in every connected component of every stratum of Abelian differentials using the minimum number of squares possible and hence proving Theorem~\ref{STS}. Though most results are stated in terms of permutation representatives, the proofs will make use of the filling pair diagrams introduced in Subsection~\ref{fpd}. The construction relies heavily on the combination lemmas (Lemmas~\ref{comb1} and~\ref{comb2}).

\subsection{Outline of proof}

We begin in Subsection~\ref{hyp} by constructing by hand 1,1-square-tiled surfaces in the hyperelliptic components of strata. For a non-hyperelliptic component in an arbitrary stratum $\calH(k_{1},\ldots,k_{n})$ we will employ Lemmas~\ref{comb1} and~\ref{comb2}. That is, for every even $k_{i}$ we will construct a 1,1-square-tiled surface in $\calH(k_{i})$, and for every pair of odd $\{k_{i},k_{j}\}$ we will construct a 1,1-square-tiled surface in $\calH(k_{i},k_{j})$. Then a 1,1-square-tiled surface in $\calH(k_{1},\ldots,k_{n})$ can be constructed by inductively applying Lemma~\ref{comb1} to these surfaces. Moreover, if we can construct 1,1-square-tiled surfaces in the odd and even components of $\calH(2k)$, then by inductively applying Lemma~\ref{comb2}, we can build a 1,1-square-tiled surface in the odd and even components of $\calH(2k_{1},\ldots,2k_{n})$. The construction of 1,1-square-tiled surfaces in $\odd(2k)$ and $\even(2k)$ is carried out in Subsection~\ref{Even}. The construction of 1,1-square-tiled surfaces in $\calH(2j+1,2k+1)$ is carried out in Subsection~\ref{Odd}. Finally, the construction of 1,1-square-tiled surfaces in certain strata with a mix of odd and even order zeros is carried out in Subsection~\ref{Gen}.

Unfortunately, this method is complicated by the strata $\calH(2)$, and $\calH(1,1)$ for which there do not exist 1,1-square-tiled surfaces built from the theoretical minimum number of squares. See Proposition~\ref{hypmin}. Moreover, there do not exist $\even(4)$ and $\even(2,2)$ components that can be used in our construction. As such, we are required to modify the ideal method described above in such situations.

\subsection{Hyperelliptic components}\label{hyp}

We begin by constructing 1,1-square-tiled surfaces in the hyperelliptic components that realise the number of squares claimed in Theorem~\ref{STS}. Indeed, this is the content of the following proposition. We will then prove that these are the minimum number of squares necessary for 1,1-square-tiled surfaces in the hyperelliptic components. The fact that these numbers are strictly greater than the minimum required for square-tiled surfaces in the ambient stratum, particularly for genus two, will cause us difficulty in the subsections that follow. Indeed, since the strata $\calH(2)$ and $\calH(1,1)$ are connected and coincide with their hyperelliptic components, we will not have 1,1-square-tiled surfaces in these strata that can be used to build minimal 1,1-square-tiled surfaces in higher genus strata.

\begin{proposition}\label{Hyp}
For $g\geq 2$, the permutations
\begin{equation}\label{hyp2g-2}
\begin{multlined}
\left(\begin{matrix}
0&1&2&3&4&\cdots&2g-5&2g-4&2g-3 \\
4g-4&4g-6&4g-5&4g-8&4g-7&\cdots&2g&2g+1&2g-1
\end{matrix}\color{white} \right) \\
\hspace{2.2cm}\color{white}\left( \color{black}
\begin{matrix}
2g-2&2g-1&\cdots&4g-8&4g-7&4g-6&4g-5&4g-4\\
2g-3&2g-2&\cdots&3&4&1&2&0
\end{matrix}\right)
\end{multlined}
\end{equation}
and 
\begin{equation}\label{hypg-1}
\begin{multlined}
\left(\begin{matrix}
0&1&2&3&4&\cdots&2g-3&2g-2&2g-1 \\
4g-2&4g-4&4g-3&4g-6&4g-5&\cdots&2g&2g+1&2g-1
\end{matrix}\color{white} \right) \\
\hspace{2.2cm}\color{white}\left( \color{black}
\begin{matrix}
2g&2g+1&\cdots&4g-6&4g-5&4g-4&4g-3&4g-2\\
2g-3&2g-2&\cdots&3&4&1&2&0
\end{matrix}\right)
\end{multlined}
\end{equation}
represent 1,1-square-tiled surfaces in $\hyp(2g-2)$ and $\hyp(g-1,g-1)$, respectively.
\end{proposition}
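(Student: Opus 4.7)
The plan is to verify three properties of each permutation: (a) the associated square-tiled surface has a single horizontal and a single vertical cylinder, (b) it lies in the claimed stratum (with the appropriate number of marked points accounted for), and (c) it lies in the hyperelliptic component.

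For (a), both permutations have $0$ as the first symbol of the top row and the last symbol of the bottom row, so by the construction of Subsection~\ref{permreps} the associated surface has a single horizontal cylinder of height one. For the vertical direction I will check that the permutation $\overline{\pi}$ obtained by deleting the $0$s is a single cycle on $\{1,\ldots,4g-4\}$ for (\ref{hyp2g-2}) and on $\{1,\ldots,4g-2\}$ for (\ref{hypg-1}); this can be read off the paired structure of the bottom row by direct traversal, which produces a unique cyclic orbit in each case. For (b), an Euler-characteristic count gives $2g-2$ vertex classes for (\ref{hyp2g-2}) and $2g-1$ for (\ref{hypg-1}) on the genus-$g$ surface. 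Computing the commutator $[h,v]$ of the pair associated to each permutation, I will verify that $[h,v]$ has one $(2g-1)$-cycle together with $2g-3$ fixed points in the first case, and two $g$-cycles together with $2g-2$ fixed points in the second case. The long cycles correspond to the zero of order $2g-2$ or to the two zeros of order $g-1$; the fixed points correspond to marked points in the underlying polygonal surface.

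The heart of the argument is (c). The Rauzy class of $\hyp(2g-2)$ contains the palindromic representative
\[
\pi_{\hyp(2g-2)} = \begin{pmatrix} 0 & 1 & 2 & \cdots & 2g-1 \\ 2g-1 & 2g-2 & \cdots & 1 & 0 \end{pmatrix},
\]
and similarly $\hyp(g-1,g-1)$ is represented by the analogous palindrome on $\{0,1,\ldots,2g\}$. I claim that (\ref{hyp2g-2}) is obtained from $\pi_{\hyp(2g-2)}$ by iteratively adding marked points via the operation described at the end of Subsection~\ref{permreps}. Indeed, the pairs appearing consecutively in both rows of (\ref{hyp2g-2}), namely $(2k,2k+1)$ for $g\leq k\leq 2g-3$ in the first half of the bottom row, and $(2k-1,2k)$ for $1\leq k\leq g-1$ in the second half, account for exactly $2g-3$ insertions. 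Removing the second element of each such pair and relabelling reduces (\ref{hyp2g-2}) to $\pi_{\hyp(2g-2)}$ one marked point at a time. Since adding marked points does not change the connected component, the surface represented by (\ref{hyp2g-2}) lies in $\hyp(2g-2)$. The argument for (\ref{hypg-1}) is identical, requiring $2g-2$ such removals.

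The principal obstacle is the label bookkeeping in (c), but the uniform pair pattern in the bottom rows makes the reduction straightforward to carry out by induction on the number of markers removed, once the small cases $g=2,3$ have been checked directly.
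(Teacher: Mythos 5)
Your plan is correct and follows essentially the same route as the paper: identify the permutations as the standard Veech--Zorich palindromic representatives of $\hyp(2g-2)$ and $\hyp(g-1,g-1)$ with $2g-3$ (resp.\ $2g-2$) marked points inserted, which fixes the connected component, and then verify the single vertical cylinder by checking that the permutation with the $0$s deleted is one cycle (the paper writes this cycle out explicitly, which is the one computation your plan defers). Your step (b), computing $[h,v]$ to confirm the stratum, is redundant once the marked-point reduction of step (c) is in place, but it is harmless.
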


\begin{proof}
We first note that permutations (\ref{hyp2g-2}) and (\ref{hypg-1}) are produced by adding $2g-3$ and $2g-2$ marked points to the standard permutations for $\hyp(2g-2)$ and $\hyp(g-1,g-1)$, respectively. Essentially originally due to Veech \cite{V} and stated explicitly by Zorich {\cite[Proposition 6]{Z}}, up to a relabelling, these are
\[\left(\begin{matrix}
0&1&\cdots&2g-2&2g-1\\
2g-1&2g-2&\cdots&1&0
\end{matrix}\right)\text{ and }
\left(\begin{matrix}
0&1&\cdots&2g-1&2g \\
2g&2g-1&\cdots&1&0
\end{matrix}\right).\]
Indeed, in the case of $\hyp(2g-2)$, one can check that we have added marked points by splitting the sides labelled by $i$ for $1\leq i\leq g-1$, $g+1\leq i\leq 2g-2$. Similarly, for $\hyp(g-1,g-1)$ we have split the sides labelled by $i$ for $1\leq i\leq g-1$, $g+1\leq i\leq 2g-1$. As such, permutations (\ref{hyp2g-2}) and (\ref{hypg-1}) do indeed represent $\hyp(2g-2)$ and $\hyp(g-1,g-1)$, respectively. Therefore, we need only prove that the permutations have one vertical and one horizontal cylinder when representing a square-tiled surface.

It is clear that these permutation representatives give rise to square-tiled surfaces with one horizontal cylinder. To check for the vertical cylinder we look at the permutation obtained by removing the 0s from the permutation representative. For permutation (\ref{hyp2g-2}), the permutation with the 0s removed is the following cycle:
\[(1,4g-4,2,4g-6,4,\ldots,2g,2g-2,2g-1,2g-3,2g+1,2g-5,2g+3,\ldots,4g-7,3,4g-5).\]
We have a single cycle and so we see that the square-tiled surface has one vertical cylinder. Hence we do indeed have a 1,1-square-tiled surface. Similarly, for permutation (\ref{hypg-1}), the permutation with the 0s removed is a cycle as follows:
\[(1,4g-2,2,4g-4,4,\ldots,2g-2,2g,2g-1,2g+1,2g-3,2g+3,2g-5,\ldots,4g-5,3,4g-3).\]
Again, since we have a single cycle, we see that the square-tiled surface has one vertical cylinder and so we have a 1,1-square-tiled surface. Hence, the proposition is proved.
\end{proof}

We observe that the 1,1-square-tiled surfaces built from these permutation representatives exhibit the number of squares claimed in the statement of Theorem~\ref{STS}; that is, we have $4g-4$ squares for $\hyp(2g-2)$ and $4g-2$ squares for $\hyp(g-1,g-1)$. To finish the proof of Theorem~\ref{STS} for the hyperelliptic cases we must show that these are in fact the minimum number of squares required for these components.

\begin{proposition}\label{hypmin}
A 1,1-square-tiled surface in the stratum $\hyp(2g-2)$ or $\hyp(g-1,g-1)$ requires at least $4g-4$ or $4g-2$ squares, respectively.
\end{proposition}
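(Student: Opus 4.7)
The plan is to use the hyperelliptic involution $\tau$ to constrain the CW structure on $S$ determined by the core curves $\alpha$ and $\beta$ of the unique horizontal and vertical cylinders of a 1,1-square-tiled surface $(S,\omega)$ with $n$ squares. Since $\tau^{*}\omega=-\omega$, the map $\tau$ preserves the horizontal and vertical foliations while reversing their natural orientations; it therefore permutes horizontal cylinders and likewise vertical ones. With only one cylinder in each direction, $\tau$ must fix each one setwise, and in particular it preserves $\alpha$ and $\beta$ as sets while reversing their orientations. Hence $\tau|_{\alpha}$ and $\tau|_{\beta}$ are orientation-reversing involutions of $S^{1}$, each with exactly $2$ fixed points.

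I would then exploit the CW decomposition of $S$ induced by $\alpha\cup\beta$: there are $n$ vertices (the intersection points $\alpha\cap\beta$, one per square), $2n$ edges (arcs of $\alpha$ and $\beta$), and therefore $F=n-2g+2$ complementary faces by the Euler characteristic. The involution $\tau$ preserves this cell structure and permutes the faces. Any face $P$ fixed by $\tau$ carries an orientation-preserving involution $\tau|_{P}$ of an open disk; because $\tau$ is a non-trivial holomorphic involution of $S$, $\tau|_{P}$ cannot be the identity, and a classical topological argument (Ker\'ekj\'art\'o) shows that $\tau|_{P}$ then has exactly one fixed point in $P$. Writing $F_{\tau}$ for the number of $\tau$-fixed faces, these fixed faces therefore contribute precisely $F_{\tau}$ Weierstrass points in their interiors.

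The key count is that every one of the $2g+2$ Weierstrass points lies either on $\alpha\cup\beta$ or in the interior of a fixed face, and the subset $\alpha\cup\beta$ contains at most $4$ Weierstrass points (two from $\tau|_{\alpha}$ and two from $\tau|_{\beta}$, possibly coinciding at vertices). Hence $F_{\tau}+4\geq 2g+2$, i.e.\ $F_{\tau}\geq 2g-2$. For $\hyp(2g-2)$, the unique zero is a Weierstrass point lying in the interior of the unique $4(2g-1)$-gon; that face must therefore be fixed by $\tau$, and the bound $F\geq F_{\tau}\geq 2g-2$ yields $n=F+2g-2\geq 4g-4$. For $\hyp(g-1,g-1)$, the two zeros are interchanged by $\tau$, so the two big $4g$-gons are swapped and contribute $0$ to $F_{\tau}$; hence $F_{\tau}\leq F-2$, which gives $F\geq 2g$ and $n\geq 4g-2$.

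The main technical point will be justifying that each $\tau$-fixed face contributes exactly one interior fixed point, which reduces to the fact that a non-trivial orientation-preserving periodic self-homeomorphism of an open disk is conjugate to a Euclidean rotation, and rotation by $\pi$ has a unique fixed point; non-triviality of $\tau|_{P}$ is immediate from holomorphicity, since a holomorphic involution of $S$ fixing an open set is the identity. The only other input, that the two zeros of an element of $\hyp(g-1,g-1)$ are interchanged by $\tau$, has already been recorded in the background section.
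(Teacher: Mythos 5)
Your proof is correct, and it takes a genuinely different route from the paper's. The paper pushes $\alpha$ and $\beta$ down to simple arcs on the quotient sphere $S_{0,2g+2}$ and bounds the number of complementary regions of the arc system there, using the filling property to force at most one puncture per region; this entails a three-way case analysis on the parity of $n$ and on whether the two arcs share endpoints. You stay on $S$, use the CW structure induced by $\alpha\cup\beta$, and count $\tau$-fixed faces against the $2g+2$ Weierstrass points, so the single inequality $2g+2\leq 4+F_{\tau}$ replaces the case analysis, and the key lemma becomes ``a nontrivial orientation-preserving involution of a disk has exactly one interior fixed point'' rather than ``no complementary region on the sphere contains two punctures''. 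The two arguments are essentially Poincar\'e-dual organisations of the same count---a fixed face upstairs descends to a punctured region downstairs, a swapped pair of faces to an unpunctured one---and your treatment of $\hyp(g-1,g-1)$, where the two big faces are interchanged so that $F_{\tau}\leq F-2$, is precisely the paper's observation that the symmetric pair of zeros forces an extra unpunctured region. Your version buys uniformity at the cost of invoking Ker\'ekj\'art\'o; note, though, that since $\tau$ is holomorphic and a fixed face is a simply connected proper subdomain of $S$, hence biholomorphic to the unit disk, $\tau|_{P}$ is an elliptic M\"obius involution and the one-fixed-point claim is immediate, so the topological input can be avoided entirely. One shared informality to be aware of: both proofs take $V=i(\alpha,\beta)=n$, which presumes both cylinders have height one; in general $i(\alpha,\beta)=n/(h_{1}h_{2})\leq n$, so the argument in fact bounds $i(\alpha,\beta)$ from below and the stated bound on $n$ follows a fortiori.
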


\begin{proof}
We formalise and generalise a method for genus two surfaces attributed to Margalit in a remark in a paper of Aougab-Huang in which they determine the minimal geometric intersection numbers for filling pairs on closed surfaces {\cite[Remark 2.18]{AH}}. The idea is to investigate the combinatorics of the images of the filling pair under the quotient by the hyperelliptic involution. If there is an arc between two punctures on the quotient sphere that is disjoint from the images of the filling pair, then this arc lifts to a curve disjoint from the filling pair on the original surface which contradicts the fact that the curves were filling.

Suppose that we have a 1,1-square-tiled surface $(S,\omega)$ in $\hyp(2g-2)$ made from $n$ squares and assume $n$ to be minimal. The core curves, $\alpha$ and $\beta$, of the vertical and horizontal cylinders form a filling pair of curves on the surface with geometric intersection number equal to $n$. Every intersection occurs with the same orientation, and so $\alpha$ and $\beta$ are nonseparating. Since $S$ is hyperelliptic, there exists an isometric involution $\tau:\!S\to S$ and a branched double cover $\pi:\! S\to S_{0,2g+2}$ of the sphere with $2g+2$ punctures. Since $\tau^{*}\omega = -\omega$, the vertical and horizontal cylinders are sent to vertical and horizontal cylinders, respectively. Moreover, since $\tau$ acts by isometry, the number of such cylinders is fixed. Hence, $\alpha$ and $\beta$ are nonseparating curves fixed by the hyperelliptic involution and so we have that $\pi(\alpha)$ and $\pi(\beta)$ are simple arcs on $S_{0,2g+2}$.

If $n$ is odd then, since any interior intersection of the arcs $\pi(\alpha)$ and $\pi(\beta)$ will lift to two intersections of $\alpha$ and $\beta$ on $S$, $\pi(\alpha)$ and $\pi(\beta)$ must share a single endpoint at a puncture on the sphere and have $(n-1)/2$ interior intersections. The arcs form a graph on the sphere with $3 + (n-1)/2$ vertices. Apart from the endpoints of the arcs which have valency 1 or 2, each vertex has valency 4, and so we have $n+1$ edges. It follows from an Euler characteristic argument that the resulting graph has $(n+1)/2$ complimentary regions. As mentioned above, we must have a maximum of one puncture in each complementary region. Three of the punctures lie at the endpoints of the arcs and so we must have
\[\frac{n+1}{2}\geq 2g-1 \Rightarrow n\geq 4g-3.\]

If $n$ is even then, by a similar argument to that given for $n$ odd above, $\pi(\alpha)$ and $\pi(\beta)$ either share both of their endpoints or have disjoint endpoints. In the former case, we have $(n-2)/2$ interior intersections. The arcs form a graph with $2+(n-2)/2$ vertices and $n$ edges. Hence we have $(n+2)/2$ complimentary regions. Two of the punctures lie at the endpoints and so we must have
\[\frac{n+2}{2}\geq 2g \Rightarrow n\geq 4g-2.\]
In the latter case, we have $n/2$ interior intersections. The arcs form a graph with $4+n/2$ vertices and $n+2$ edges. Hence we have $n/2$ complimentary regions. Four punctures lie at endpoints and so we must have
\[\frac{n}{2} \geq 2g-2 \Rightarrow n\geq 4g-4.\]
Hence we see that a 1,1-square-tiled surface in $\hyp(2g-2)$ requires at least $4g-4$ squares.

Suppose now that $S$ is a 1,1-square-tiled surface in $\hyp(g-1,g-1)$ with $n$ squares, with $n$ again assumed to be minimal. As above, the core curves of the cylinders, $\alpha$ and $\beta$, are nonseparating curves with geometric intersection number equal to $n$ and are fixed by the hyperelliptic involution. Hence, we have that $\pi(\alpha)$ and $\pi(\beta)$ are simple arcs on $S_{0,2g+2}$. Moreover, we must again have a maximum of one puncture in each complimentary region of the arcs. However, since the zeros of $\omega$ are by definition symmetric to one another by the hyperelliptic involution, they will correspond to a complementary region between $\pi(\alpha)$ and $\pi(\beta)$ that does not contain a puncture. So, in this case, we require one more complementary region between the arcs than we needed for $\hyp(2g-2)$. As such, we require an additional interior intersection of the arcs, which corresponds to two additional intersections of the filling pair, and so we must have $n\geq 4g-2$. This completes the proof of the proposition.
\end{proof}

\subsection{Even order zeros}\label{Even}

In this subsection we will construct 1,1-square-tiled surfaces in the odd and even components of strata with even order zeros. That, is we construct 1,1-square-tiled surfaces with a minimal number of squares in the odd and even components of strata of the form $\calH(2k_{1},\ldots,2k_{n})$, $k_{i}\geq 1$ and $\sum k_{i}=2g-2$. We do this by building base cases in the odd and even components of the strata $\calH(2k)$, $k\geq 2$. These surfaces can then be combined using the cylinder concatenation methods of Lemmas~\ref{comb1} and~\ref{comb2}. We must also deal with the fact that 1,1-square-tiled surfaces in $\calH(2)$ cannot be used to construct 1,1-square-tiled surfaces in higher genus strata. Moreover, there are no components $\even(4)$ and $\even(2,2)$ and so we have more work to do in order to be able to construct 1,1-square-tiled surfaces in the even components of strata having only zeros of order 2 or 4. \\

\paragraph*{{\bf Strata of the form $\bs{\calH(2k)}$}}

We begin by constructing 1,1-square-tiled surfaces in $\odd(2k)$, for $k\geq 2$.

\begin{proposition}\label{Oddmin}
The permutations
\begin{equation}\label{odd4}
\left(\begin{matrix}
0&1&2&3&4&5 \\
2&5&4&1&3&0
\end{matrix}\right),
\end{equation}
and, for  $g\geq 4$,
\begin{equation}\label{oddmin}
\left(\begin{matrix}
0&1&2&3&4&5&6&7&8&9&\cdots&2g-4&2g-3&2g-2&2g-1\\
2&5&4&7&3&9&6&11&8&13&\cdots&2g-4&1&2g-2&0
\end{matrix}\right)
\end{equation}
represent 1,1-square-tiled surfaces in $\odd(4)$ and $\odd(2g-2)$, respectively. Moreover, these surfaces have the minimum number of squares necessary for their respective strata.
\end{proposition}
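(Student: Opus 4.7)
The plan is to verify, in a few steps, that these permutations represent 1,1-square-tiled surfaces in the odd components with the minimum number of squares. Counting is immediate: (\ref{odd4}) uses six symbols and hence five squares, while (\ref{oddmin}) uses $2g$ symbols and $2g-1$ squares. Since $\calH(2g-2)$ has $n=1$ zero, the Euler-characteristic lower bound is $2g+n-2 = 2g-1$, so minimality will follow automatically once the stratum is identified.

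For the 1,1-structure, I first note that the permutations begin with $0$ on top and end with $0$ on the bottom, so by Subsection~\ref{permreps} there is a single horizontal cylinder. For the vertical cylinder I compute the permutation $\overline{\pi}$ obtained by removing the $0$s and verify it is a single cycle. For (\ref{odd4}) this yields $(1,2,5,3,4)$ directly; for (\ref{oddmin}), reading off the explicit bottom row shows that for $g \geq 4$ the cycle of $\overline{\pi}$ is
\[
(1,\,2,\,5,\,3,\,4,\,7,\,6,\,9,\,8,\,11,\,10,\,\ldots,\,2g-1,\,2g-2),
\]
consisting of the initial segment $1,2,5,3,4$ followed by the pairs $(2k+1,2k)$ for $k=3,\ldots,g-1$ in sequence, which I would confirm by a short induction on $g$. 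With the 1,1-structure established, identifying the stratum as $\calH(2g-2)$ is routine: either compute the commutator $[h,v]$ with $h=(1,2,\ldots,2g-1)$ and $v=\overline{\pi}^{-1}$ and check it is a single $(2g-1)$-cycle, or equivalently observe that the ribbon graph of the filling pair diagram has a single boundary component, forcing one zero of order $2g-2$.

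Next I identify the connected component. Proposition~\ref{hypmin} shows $\hyp(2g-2)$ requires at least $4g-4$ squares, and $4g-4 > 2g-1$ for $g \geq 3$, so $\hyp(2g-2)$ is excluded. For $g=3$, the Kontsevich--Zorich classification leaves only $\odd(4)$, settling (\ref{odd4}). For $g \geq 4$ it remains to show the Arf invariant equals $1$, thus distinguishing $\odd(2g-2)$ from $\even(2g-2)$. The horizontal and vertical core curves give a symplectic pair with both indices equal to $0$, contributing $(\ind+1)(\ind+1)=1$ to the Arf sum; I would then complete to a symplectic basis using further cycles traced directly in the filling pair diagram, or alternatively apply Zorich's spin-parity algorithm from Appendix~C of \cite{Z} to the permutation representative.

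The main obstacle is the Arf invariant computation: choosing a symplectic basis whose winding indices are transparent from the explicit pattern of $\overline{\pi}$, and then organising the sum so the terms collapse to $1$ modulo $2$ uniformly across $g \geq 4$.
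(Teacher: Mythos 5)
Your square count, the identification of the single horizontal cylinder, and the computation of $\overline{\pi}$ as the single cycle $(1,2,5,3,4,7,6,9,8,\ldots,2g-1,2g-2)$ are all correct, and your treatment of the $g=3$ case is a genuine (and cleaner) alternative to the paper: since $\calH(4)$ has only the components $\hyp(4)$ and $\odd(4)$, excluding $\hyp(4)$ via Proposition~\ref{hypmin} settles (\ref{odd4}) with no Arf computation, whereas the paper computes the Arf invariant directly from the $20$-gon decomposition. Two caveats on what you call ``routine'': the identification of the stratum as $\calH(2g-2)$ for all $g$ is itself an induction (the paper runs it by tracking how successive vertex transpositions of the filling pair diagram merge the $2g-1$ four-sided boundary components of the ribbon graph into a single $(8g-4)$-sided one), so it needs to be written out, not just asserted.

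The genuine gap is exactly the step you flag as ``the main obstacle'': for $g\geq 4$ the component $\even(2g-2)$ exists, so the hyperelliptic-exclusion trick no longer suffices and the Arf invariant must actually be shown to equal $1$ uniformly in $g$. Your proposal stops at choosing the first symplectic pair (which is fine --- the core curves have index $0$ and mod-$2$ intersection number $2g-1\equiv 1$, though the paper prefers the horizontal core curve paired with the slope-$1$ diagonal, which meet geometrically once) and does not produce the remaining $g-1$ pairs or evaluate their contributions. The paper closes this by computing the parity explicitly for $\calH(4)$ and $\calH(6)$ from the polygonal decomposition cut along the filling pair, and then observing that each increment of $g$ modifies that decomposition in a fixed way: the new handle carries standard representatives $\alpha_{g},\beta_{g}$ both of index $1$, contributing $(1+1)(1+1)\equiv 0 \bmod 2$, while the $8$ new polygon sides are inserted in one block, so any old basis curve crossing them has its index shifted by $2$, again contributing $0 \bmod 2$. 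Without this (or an equivalent uniform computation, e.g.\ running Zorich's algorithm symbolically in $g$), the claim that the surfaces lie in $\odd(2g-2)$ rather than $\even(2g-2)$ is not established, and that claim is the substance of the proposition.
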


\begin{proof}
We will first prove that the permutations represent the claimed strata. Observe that the filling pair in Figure~\ref{odd4fig} represents permutation (\ref{odd4}). We observe that its ribbon graph has one boundary component with 20 sides and so corresponds to a single zero of order 4. That is, the permutation represents a 1,1-square-tiled surface in $\calH(4)$.

Now consider the filling pair diagram in Figure~\ref{oddmin1} with $2g-1$ vertices. We will modify this diagram to produce a filling pair diagram representing permutation (\ref{oddmin}).

We will perform a series of vertex transpositions on the filling pair diagram. These will not change the fact that we have one vertical and one horizontal cylinder but will change the number of boundary components and the number of sides of the boundary components of the associated ribbon graph. We currently have $2g-1$ boundary components with four sides. Our goal is to produce a filling pair diagram with a single boundary component with $8g-4$ sides.

We first perform two transpositions on the third, fourth and fifth vertices to give the permutation $(3,5,4)$ on the vertices. Note that this gives the first 5 vertices the combinatorics given by the filling pair diagram for $\calH(4)$ in Figure~\ref{odd4fig}. Moreover, we now have one boundary component with 20 sides and $2g-6$ boundary components with 4 sides. See Figure~\ref{oddmin2}.

\begin{figure}[H]
\begin{center}
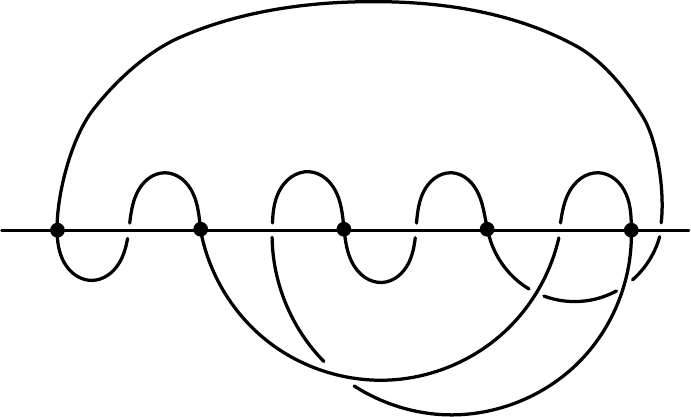
\end{center}
\caption{A filling pair diagram representing permutation (\ref{odd4}).}\label{odd4fig}
\end{figure}

\begin{figure}[H]
\begin{center}
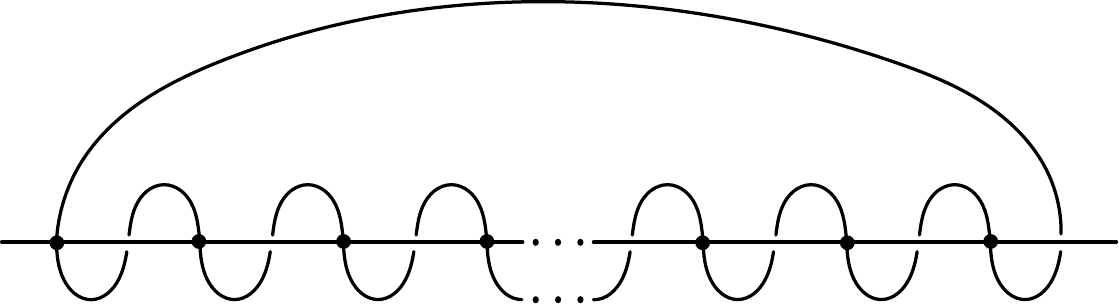
\end{center}
\caption[A filling pair diagram with $2g-1$ vertices.]{A filling pair diagram with $2g-1$ vertices. The associated ribbon graph has $2g-1$ boundary components each with 4 sides.}
\label{oddmin1}
\end{figure}

\begin{figure}[H]
\begin{center}
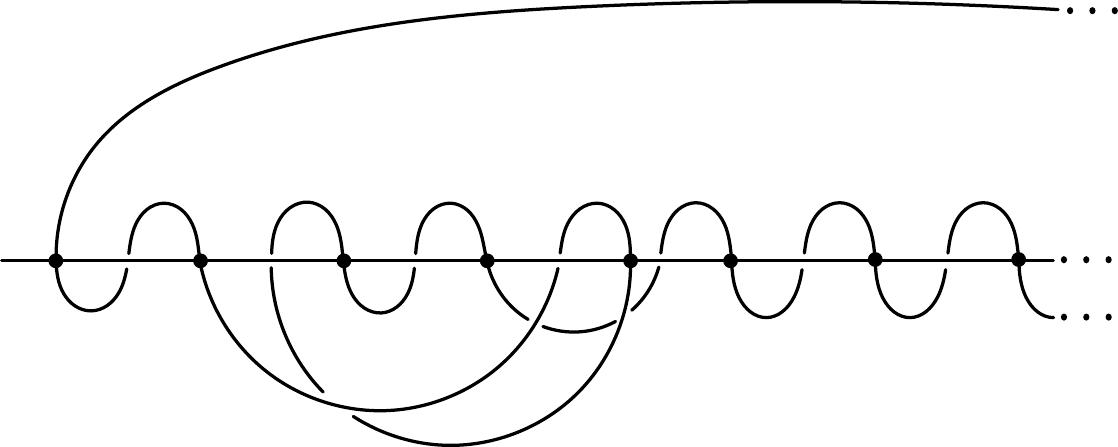
\end{center}
\caption[The filling pair diagram after applying permutation $(3,5,4)$.]{The filling pair diagram after applying permutation $(3,5,4)$ on the vertices.}
\label{oddmin2}
\end{figure}

Observe that after this permutation the boundary components around vertices 6-9 have the combinatorics shown on the left of Figure~\ref{oddmin3}, where different letters correspond to different boundary components. We then perform a vertex transposition on vertices 6 and 7, as shown on the right of Figure~\ref{oddmin3}. We now have a boundary component with 28 sides and $2g-8$ boundary components with 4 sides.

We now observe that, after the vertex transposition, the combinatorics that we had around vertices 6-9 are repeated again around vertices 8-11. As such, we can perform this transposition again to produce a boundary component with 36 sides and $2g-10$ boundary components with 4 sides. Moreover, these combinatorics persist and so we can continue to repeat this transposition for the remaining $g-5$ pairs of vertices ending up with a single boundary component. Since each vertex has valency 4, there are $4g-2$ edges in the filling pair diagram. Each edge will give two sides to the boundary component and so the boundary component will have $8g-4$ sides corresponding to a zero of order $2g-2$, as required.

It is easy to check that this filling pair diagram represents permutation (\ref{oddmin}), and so we have shown that this permutation does indeed represent a 1,1-square-tiled surface in $\calH(2g-2)$.

\begin{figure}[H]
\begin{center}
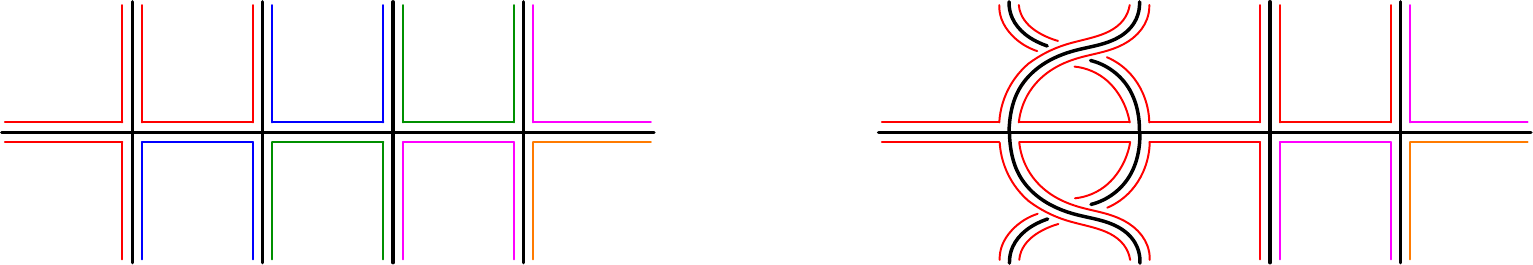
\end{center}
\caption[The effect of a vertex transposition on the boundary components.]{The effect of a vertex transposition on the boundary components of the ribbon graph of the filling pair diagram in Figure~\ref{oddmin2}.}
\label{oddmin3}
\end{figure}

\begin{figure}[H]
\begin{center}
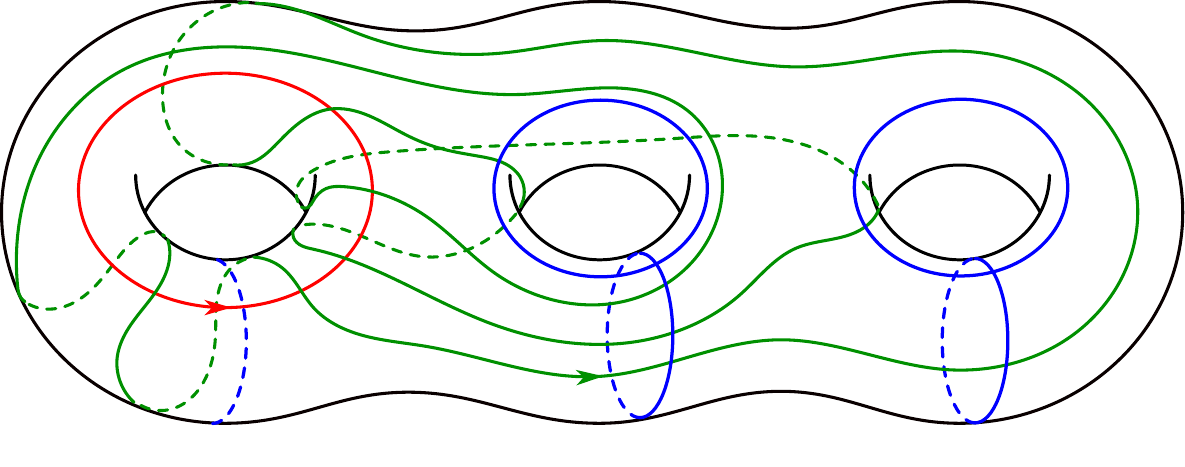
\end{center}
\caption{Realisation of the filling pair diagram representing permutation (\ref{odd4}).}
\label{oddspin1}
\end{figure}

\begin{figure}[H]
\begin{center}
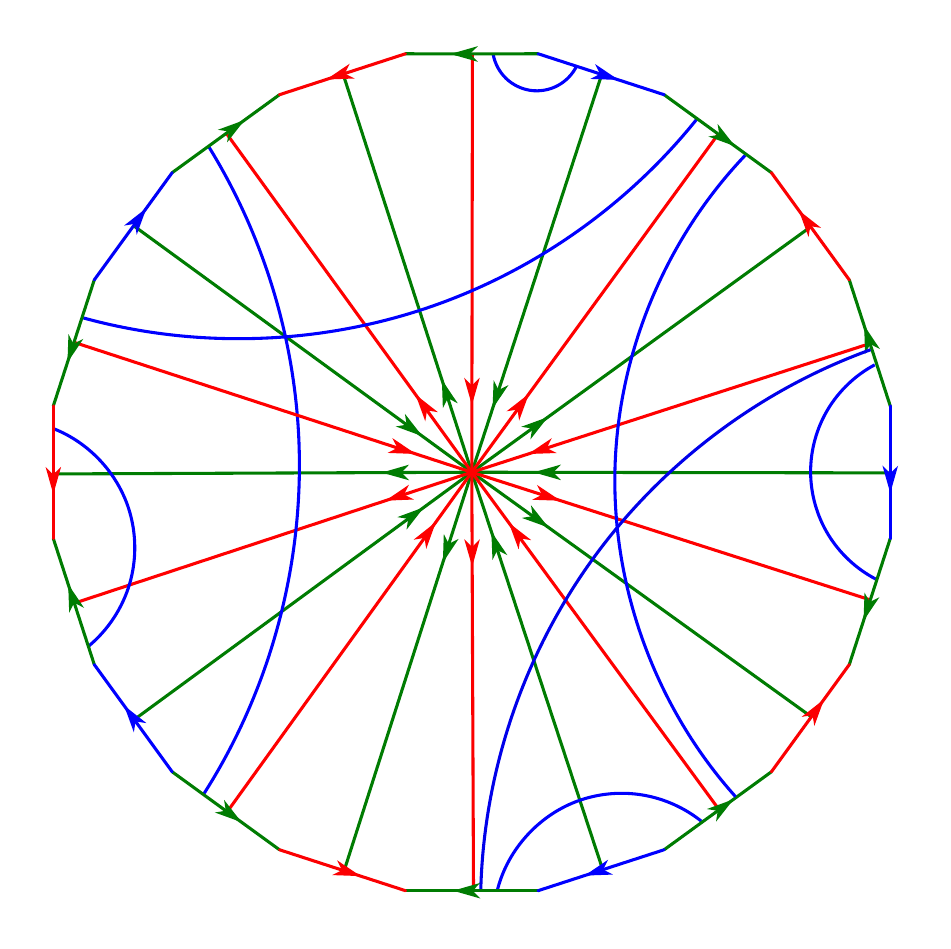
\end{center}
\caption{Polygonal decomposition of the surface given by the filling pair $(\gamma,\delta)$.}
\label{oddspin2}
\end{figure}

We must now show that these 1,1-square-tiled surfaces lie in the odd components. We will do this by calculating the spin parity of the surfaces with respect to representatives corresponding to the standard homology basis.

We first prove that the surface represented by permutation (\ref{odd4}) has odd spin structure and thus represents $\odd(4)$. We realise the filling pair diagram as the curves $\gamma$ and $\delta$ in Figure~\ref{oddspin1}, and label the arcs of each curve between their intersections with the labels $\gamma_{1},\ldots,\gamma_{5}$, and $\delta_{1},\ldots,\delta_{5}$, respectively. Here $\gamma$ corresponds to the horizontal core curve and $\delta$ to the vertical. Note that we only show the first label of each curve in the diagram. Next, choose the homology representatives $\{\alpha_{i},\beta_{i}\}_{i=1}^{3}$ as in Figure~\ref{oddspin1}. We choose $\alpha_{1} = \gamma$ and $\beta_{1}$ to be the curve of slope 1 with respect to the horizontal direction.

We now cut the surface open along the filling pair $\{\gamma,\delta\}$ to form the 20-gon shown in Figure~\ref{oddspin2}. We have also included in the figure the leaves of the vertical and horizontal foliations given by the edges of the squares making up the surface. The index of a curve can then be calculated by keeping track of the number of these lines the curve crosses and in which direction. It is then easy to show that we have
\[\sum_{i=1}^{3}(\ind(\alpha_{i})+1)(\ind(\beta_{i})+1)\equiv 1\text{ mod }2,\]
and so the canonical spin structure on the surface has odd spin parity.

\begin{figure}[H]
\begin{center}
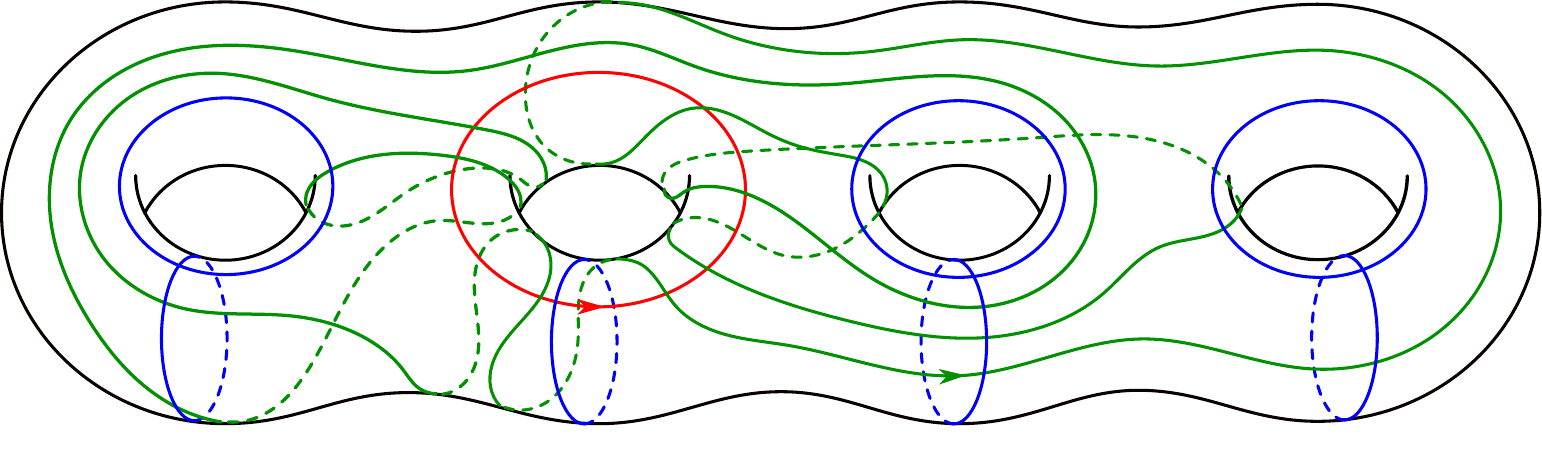
\end{center}
\caption{Realisation of filling pair diagram for $\calH(6)$.}
\label{oddspin3}
\end{figure}

The filling pair diagram given by permutation (\ref{oddmin}) representing $\calH(6)$ can be realised as in Figure~\ref{oddspin3}. A similar calculation to that above shows that the canonical spin structure on this surface also has odd spin parity. 

From this point onwards, for every additional increase of $g$ in permutation (\ref{oddmin}), the polygonal decompositions given by the realisations of the filling pair diagrams vary in a predictable manner. This is because the `final handle' on the surface, the handle associated to the final two vertices of the filling pair diagram, has the form of the handle on the left of Figure~\ref{oddspin3}; that is, the handle containing $\alpha_{4}$ and $\beta_{4}$. The change to the polygonal decompositions is then demonstrated by the changes between Figures~\ref{oddspin4} and~\ref{oddspin5}.

We see that the standard homology representatives around the added genus, $\alpha_{g}$ and $\beta_{g}$, both have index 1 and so contribute 0 to the calculation of the spin parity modulo 2. Moreover, 8 sides are added to the polygon in one piece and so, since each additional side crossed requires a rotation by $\pi/2$, the index of a curve passing these sides will change by 2 and so the contribution to the calculation of the spin parity is changed by 0 modulo 2. Altogether, we have added 0 modulo 2 to the calculation of the spin parity and so, since the surface in Figure~\ref{oddspin3} had odd spin parity, the surface represented by permutation (\ref{oddmin}) lies in $\odd(2g-2)$.

Note also that all permutations in the proposition represent square-tiled surfaces with the minimum number of squares for the respective strata, namely $2g-1$. As such, the proposition has been proved.
\end{proof}

\begin{figure}[H]
\begin{center}
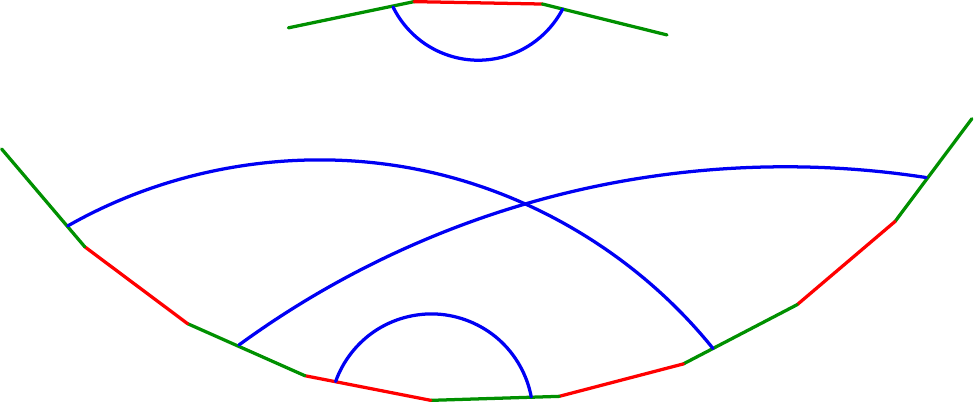
\end{center}
\caption[Part of the polygonal decomposition of surface of genus $g-1$.]{Part of the polygonal decomposition of surface of genus $g-1$ given by permutation (\ref{oddmin}).}
\label{oddspin4}
\end{figure}

\begin{figure}[H]
\begin{center}
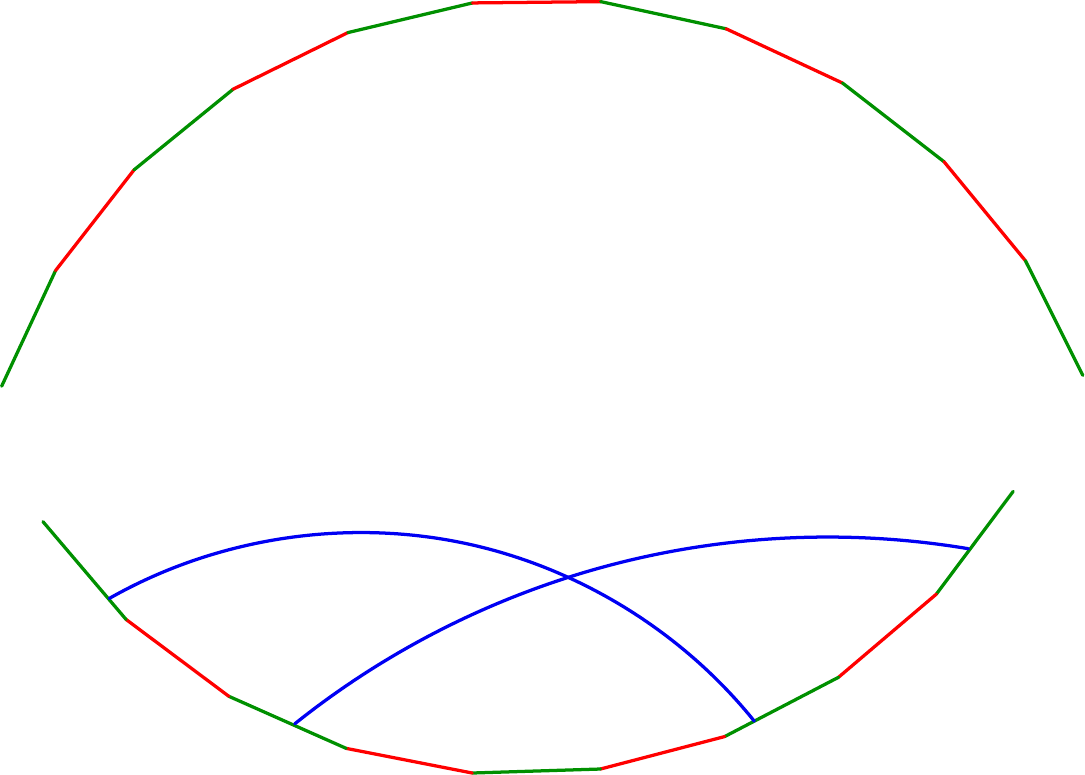
\end{center}
\caption[Part of the polygonal decomposition of surface of genus $g$.]{Part of the polygonal decomposition of surface of genus $g$ given by permutation (\ref{oddmin}).}
\label{oddspin5}
\end{figure}

Since the surfaces given by Proposition~\ref{Oddmin} have the desired form we can use Lemmas~\ref{comb1} and~\ref{comb2} to produce 1,1-square-tiled surfaces in the odd components of all strata with even order zeros of order greater than or equal to 4.

The following proposition constructs surfaces in the even components of the strata $\calH(2g-2)$.

\begin{proposition}\label{Evenmin}
The permutations
\begin{equation}\label{6even}
\left(\begin{matrix}
0&1&2&3&4&5&6&7 \\
2&7&6&5&3&1&4&0
\end{matrix}\right),
\end{equation}
and, for $g\geq 5$,
\begin{equation}\label{2g-2even}
\begin{multlined}
\left(\begin{matrix}0&1&2&3&4&5&6&7&8&9&10 \\
2&7&6&5&3&9&4&11&8&13&10
\end{matrix}\color{white} \right) \\
\hspace{2.2cm}\color{white}\left( \color{black}
\begin{matrix}
11&\cdots&2g-4&2g-3&2g-2&2g-1 \\
15&\cdots&2g-4&1&2g-2&0
\end{matrix}\right)
\end{multlined}
\end{equation}
represent 1,1-square-tiled surfaces in $\even(6)$ and $\even(2g-2)$, respectively. Moreover, these surfaces have the minimum number of squares necessary for their respective strata.
\end{proposition}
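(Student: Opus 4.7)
The plan mirrors the proof of Proposition~\ref{Oddmin}. First I would check that each permutation has first row starting $0,1,2$ and last bottom-row entry equal to $0$, which immediately gives a single horizontal cylinder of height one. For the single vertical cylinder, delete the $0$s from both rows and verify directly that the resulting permutation on $\{1,\ldots,2g-1\}$ is a single $(2g-1)$-cycle; this is a finite check for (\ref{6even}) and, for (\ref{2g-2even}), a routine bookkeeping exercise in the same spirit as for (\ref{oddmin}).

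Next I would identify the ambient stratum via the filling pair diagram. For (\ref{6even}) I would draw the $7$-vertex diagram explicitly and verify that its ribbon graph has a single boundary component with $28 = 8g-4$ sides, hence a single zero of order $6$. For (\ref{2g-2even}) with $g\geq 5$, the block of vertices from position $11$ onwards has precisely the combinatorics exploited in the proof of Proposition~\ref{Oddmin}, so the iterated transposition used there absorbs those vertices pair by pair. The initial block of ten vertices is handled by an explicit finite sequence of vertex transpositions, dictated by the values $2,7,6,5,3,9,4,11,8,13$ in positions $0$--$9$ of the bottom row, bringing the boundary count for that block up to match the inductive pattern.

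The real work is the spin parity computation, and this is where the main obstacle lies. Following Figures~\ref{oddspin1}--\ref{oddspin3}, I would realise the cut-open surface as a single polygon with the horizontal and vertical foliation leaves marked, take $\alpha_1$ to be the horizontal core curve and $\beta_1$ a slope-$1$ curve (both of index $0$, each pair contributing $1$ to the Arf sum), and complete to a symplectic basis $\{\alpha_i,\beta_i\}_{i=1}^g$. For the base case (\ref{6even}) I would compute $\sum_{i=1}^g(\ind(\alpha_i)+1)(\ind(\beta_i)+1) \bmod 2$ directly on the $28$-gon and verify the sum is $0$; the specific reshuffling of the middle entries relative to (\ref{odd4}) is precisely what flips the Arf invariant by $1$ compared to the odd case, and chasing this through the foliation indices is the step requiring the most care. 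For the inductive case (\ref{2g-2even}), each increment of $g$ attaches a new handle whose curves $\alpha_g,\beta_g$ both have index $1$ (so contribute $0 \bmod 2$) and adds an even number of sides to the polygon, so the indices of all previous basis curves shift by an even amount. Hence the parity is preserved and remains even throughout. Minimality is then automatic, since each permutation uses exactly $2g-1$ nonzero symbols, matching the bound $2g+n-2$ with $n=1$.
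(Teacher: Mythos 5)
Your proposal is correct and follows essentially the same route as the paper, whose proof of this proposition consists precisely of the observation that the argument is completely analogous to that of Proposition~\ref{Oddmin}: a direct check of the cylinders and a direct Arf-invariant computation on the polygonal decomposition for the base case (\ref{6even}), followed by the same handle-by-handle induction for (\ref{2g-2even}). The only point to phrase carefully is the inductive step: it is not merely that an \emph{even} number of sides is added, but that the $8$ new sides are added in one contiguous block, so a basis curve crossing them rotates by $8\cdot\pi/2=4\pi$ and its index changes by exactly $2$.
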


\begin{proof}
The proof is completely analogous to the proof of Proposition~\ref{Oddmin}. That is, one can show that permutation (\ref{6even}) represents a 1,1-square-tiled surface in $\even(6)$ by calculating directly on the polygonal decomposition given by the filling pair. Applying the same induction used in the proof of Proposition~\ref{Oddmin} then shows that permutation (\ref{2g-2even}) represents a 1,1-square-tiled surface in $\even(2g-2)$.
\end{proof}

\paragraph*{{\bf Handling the non-existence of $\bs{\even(4)}$}}

We now have the first instance in which we must construct an exceptional case separately. Note that using Lemmas~\ref{comb1} and~\ref{comb2}, we can use surfaces given by Propositions~\ref{Oddmin} and~\ref{Evenmin} to produce 1,1-square-tiled surfaces in the even components of all strata with even order zeros of order greater than or equal to 4 apart from strata containing only zeros of order 4. This is because there is no even component in the stratum $\calH(4)$. However, the permutation
\[\left(\begin{matrix}
0&1&2&3&4&5&6&7&8&9&10 \\
2&10&7&5&8&1&9&6&4&3&0
\end{matrix}\right)\]
represents a 1,1-square-tiled surface in $\even(4,4)$ and so we can use this to produce 1,1-square-tiled surfaces in the even components of these exceptional strata. \\

\paragraph*{{\bf Handling the hyperellipticity of $\bs{\calH(2)}$ and non-existence of $\bs{\even(2,2)}$}}

As we saw in the previous subsection, all surfaces in genus two are hyperelliptic and so 1,1-square-tiled surfaces require more than the minimum number of squares required for a square-tiled surface in their respective stratum. As such, we do not have a 1,1-square-tiled surface in $\calH(2)$ that can be concatenated, as in Lemmas~\ref{comb1} and~\ref{comb2}, with the surfaces we have produced in the propositions above. It is therefore necessary to produce strata containing zeros of order 2 separately. This is the content of the following propositions.

The first proposition produces 1,1-square-tiled surfaces in $\odd(2k,2)$, for $k\geq 3$.

\begin{proposition}\label{2Odd}
The permutations
\begin{equation}\label{2odd6}
\left(\begin{matrix}
0&1&2&3&4&5&6&7&8&9&10 \\
2&5&4&6&3&8&10&7&1&9&0
\end{matrix}\right),
\end{equation}
and, for $k\geq 4$,
\begin{equation}\label{2odd}
\begin{multlined}
\left(\begin{matrix}
0&1&2&3&4&5&6&7&8&9&\cdots&2k-4&2k-3 \\
2&5&4&7&3&9&6&11&8&13&\cdots&2k-4&2k
\end{matrix}\color{white}\right) \\
\hspace{2cm}\color{white}\left(\color{black}\begin{matrix}
2k-2&2k-1&2k&2k+1&2k+2&2k+3&2k+4 \\
2k-2&2k+2&2k+4&2k+1&1&2k+3&0
\end{matrix}\right)
\end{multlined}
\end{equation}
represent 1,1-square-tiled surfaces in $\odd(6,2)$ and $\odd(2k,2)$, respectively. Moreover, these surfaces have the minimum number of squares necessary for their respective strata.
\end{proposition}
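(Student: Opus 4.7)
The plan is to follow the template of Propositions~\ref{Oddmin} and~\ref{Evenmin}: first verify the cylinder structure by inspection of the permutation, then analyse the filling pair diagram to read off the stratum, and finally establish the spin parity via the Arf invariant formula. For each of the permutations in the statement, the first entry of the top row matches the last entry of the bottom row, so the associated square-tiled surface immediately has a single horizontal cylinder. For the vertical cylinder, removing the $0$s from each row should yield a single cycle: for (\ref{2odd6}) a direct check gives a single $10$-cycle, and for (\ref{2odd}) the prescribed pattern produces a single $(2k+4)$-cycle. Since $10 = 2\cdot 5 + 2 - 2$ and $2k+4 = 2(k+2) + 2 - 2$ are the theoretical minima for $\calH(6,2)$ and $\calH(2k,2)$ respectively, minimality is automatic once the strata are identified.

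To identify the strata I would draw the filling pair diagram of permutation (\ref{2odd6}) explicitly and verify, by tracing the ribbon graph boundary, that it has exactly two boundary components: one with $28$ sides (a zero of order $6$) and one with $12$ sides (a zero of order $2$). For $k \geq 4$, the initial $2k-3$ vertices of the filling pair diagram of (\ref{2odd}) are arranged in the same pattern as in (\ref{oddmin}); I would then apply the same sequence of vertex transpositions used in the proof of Proposition~\ref{Oddmin} to merge all but one of the $4$-sided boundary components in that initial portion of the diagram into a single large boundary component, while the final five vertices — which encode the handle carrying the order-$2$ zero — are deliberately arranged so that a $12$-sided boundary component is created and preserved throughout the process. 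The resulting ribbon graph has one boundary of size $8k+4$ and one of size $12$, placing the surface in $\calH(2k,2)$.

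For the spin parity I would realise the base surface given by (\ref{2odd6}) as a flat polygon, cut it open along its filling pair, choose the symplectic basis $\{\alpha_i,\beta_i\}_{i=1}^{5}$ with $\alpha_1$ the horizontal core curve, $\beta_1$ the slope-one diagonal, and the remaining four pairs the standard homology curves on each subsequent handle, and compute each index modulo $2$ against the horizontal foliation to show the Arf invariant equals $1$. For $k \geq 4$ the induction of Proposition~\ref{Oddmin} then applies verbatim: each added handle contributes two index-$1$ curves, hence $0 \bmod 2$ to the Arf sum, and each extra block of $8$ sides glued into the polygonal decomposition shifts a passing curve's index by $2$, also $0 \bmod 2$. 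The main technical obstacle is the explicit spin-parity verification for the base case, which requires careful tracking of the $40$-gon obtained by cutting along the filling pair of (\ref{2odd6}); the calculation is elementary but error-prone, and in practice could be cross-checked using the \texttt{.arf\_invariant()} method of the \texttt{surface\_dynamics} package mentioned in Subsection~\ref{permreps}.
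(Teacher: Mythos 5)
Your overall strategy matches the paper's: verify the two cylinders from the permutation, identify the stratum by counting boundary components of the ribbon graph of the filling pair diagram, compute the spin parity via indices and the Arf invariant, and get minimality from the square count. The routes diverge in the details, though. For the stratum, the paper does not rerun the vertex-transposition induction on the initial portion of the diagram; it observes that the diagrams for (\ref{2odd6}) and (\ref{2odd}) are obtained from the \emph{finished} diagrams of Proposition~\ref{Oddmin} by appending a fixed five-vertex block (Figure~\ref{f:H2}), and then simply tracks how the $20$ new boundary sides distribute: $12$ form a new component (the order-$2$ zero) and $8$ attach to the existing component, raising its order by $2$. For the spin parity, the paper likewise avoids any fresh base-case computation: it shows that appending this block changes the index of every old basis curve by an even amount and introduces two new handles whose four basis curves all have index $1$, so the parity of the surfaces of Proposition~\ref{Oddmin} --- already known to be odd --- is inherited. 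Your route is legitimate but costs an extra explicit computation and buys nothing.

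That extra computation is also where the one real gap sits. The permutation (\ref{2odd6}) is \emph{not} the $k=3$ member of the family (\ref{2odd}): its initial portion is built on (\ref{odd4}), whereas (\ref{2odd}) for $k\geq 4$ is built on (\ref{oddmin}), and these initial portions are combinatorially different. So an induction on $k$ for the family (\ref{2odd}) cannot be anchored at (\ref{2odd6}); you would need a separate base case at $k=4$ (just as Proposition~\ref{Oddmin} needed both $\calH(4)$ and $\calH(6)$ computed directly before its induction could start), or else an argument that the five-vertex appendage preserves parity --- which is precisely the paper's argument and would render your base-case computation unnecessary. A smaller slip: cutting (\ref{2odd6}) open along its filling pair yields a disjoint union of a $28$-gon and a $12$-gon (one polygon per zero), not a single $40$-gon, so the index bookkeeping must be carried out across two polygons.
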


\begin{proof}
We first begin by realising permutation (\ref{2odd6}) by the filling pair diagram in Figure~\ref{2odd1}. 

Note that to produce this filling pair diagram we have added the 5 vertices shown in Figure~\ref{f:H2} to the right-hand side of filling pair diagram for $\odd(4)$ in Figure~\ref{odd4fig}. These additional vertices contribute another 10 edges to the diagram and so another 20 sides to the boundary components of the associated ribbon graph. It is easy to check that there is a boundary component with 12 sides corresponding to a zero of order 2, and that the 8 remaining additional sides are added to the boundary component that represented the zero of order 4 in the original ribbon graph. Hence, we have a second boundary component corresponding to a zero of order 6. That is, the filling pair diagram represents a 1,1-square-tiled surface in the stratum $\calH(6,2)$.

The argument for permutation (\ref{2odd}) is similar in that we add 5 vertices in the same way to the right-hand side of the filling pair diagram representing permutation (\ref{oddmin}). These have a single zero of order 2, as above, and a second zero of order two more than the order of the zero represented by permutation (\ref{oddmin}). It is easy to check that the resulting 1,1-square-tiled surface lies in the claimed stratum.

To check that the surfaces have the claimed spin parity, we investigate the effect that modifying the permutations of Proposition~\ref{Oddmin} to achieve permutations (\ref{2odd6}) and (\ref{2odd}) has on the associated polygonal decompositions. This effect is demonstrated in Figure~\ref{2odd2}. We see that the 8 sides added to the polygon of the original surface are added in one piece and so, as was the case in Proposition~\ref{Oddmin}, the index of any curve crossing these sides is changed by 2 and so changes the calculation of the spin parity by 0 modulo 2. We also observe that the homology representatives, $\alpha_{g+1},\alpha_{g+2},\beta_{g+1}$, and $\beta_{g+2}$, around the two additional genus all have index 1 and so together contribute 0 modulo 2 to the calculation of the spin parity. Therefore, the spin parity of the resulting surface is the same as the spin parity of the surface we started with which in this case is odd. That is, permutations (\ref{2odd6}) and (\ref{2odd}) do indeed represent the odd components of their respective strata.

Finally, observe that the minimum number of squares required for square-tiled surfaces in $\calH(6,2)$ and $\calH(2k,2)$ are 10 and $2k+4$, respectively. As such, the 1,1-square-tiled surfaces we have produced have the minimum number of squares required for their respective strata. Hence the proposition is proved.
\end{proof}

\begin{figure}[H]
\begin{center}
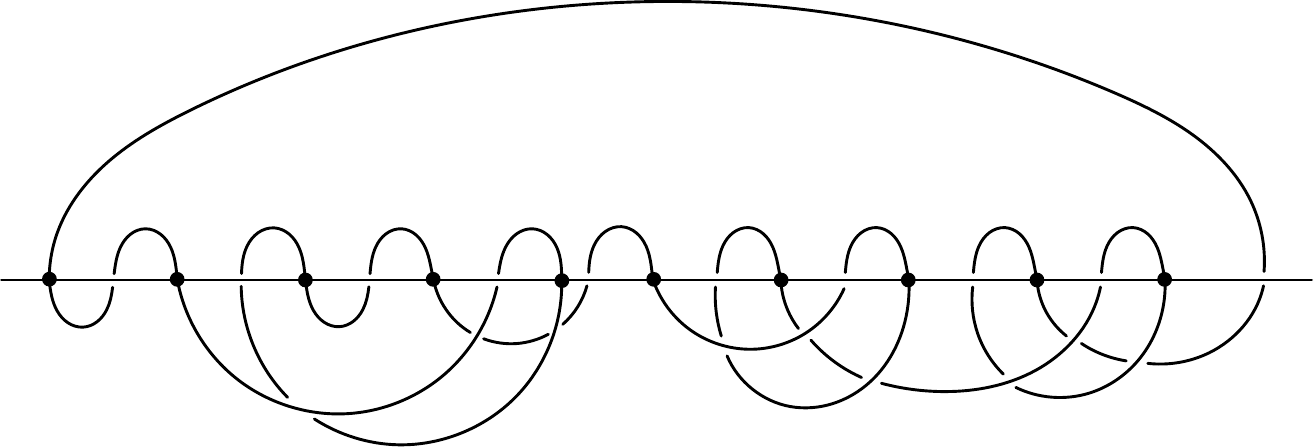
\end{center}
\caption{Filling pair diagram representing permutation (\ref{2odd6}).}
\label{2odd1}
\end{figure}

\begin{figure}[H]
\begin{center}
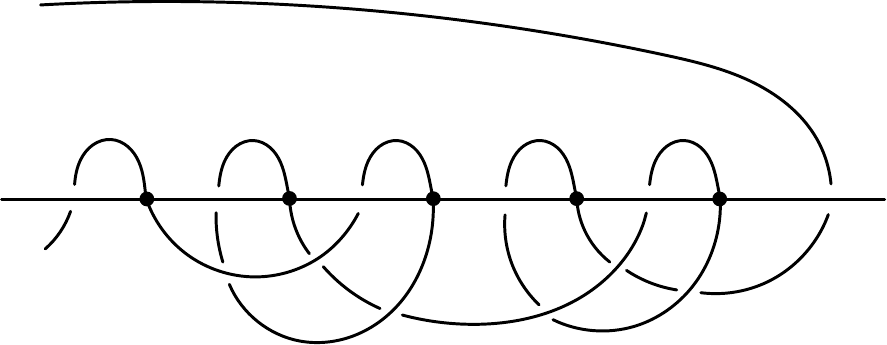
\end{center}
\caption{Filling pair diagram combinatorics for adding $\calH(2)$.}\label{f:H2}
\end{figure}

\begin{figure}[H]
\begin{center}
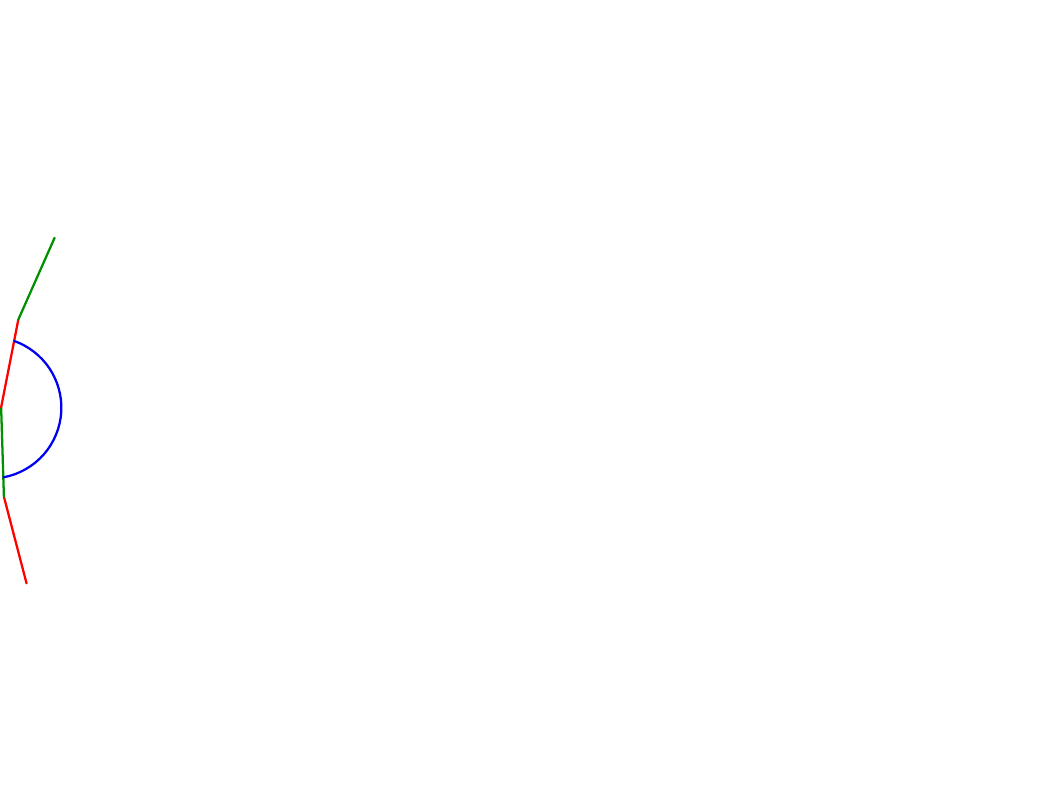
\end{center}
\caption[The effect on the polygonal decomposition.]{The effect on the polygonal decomposition of changing permutation (\ref{oddmin}) to permutation (\ref{2odd}), where the genus of the resulting surface is $g+2$.}
\label{2odd2}
\end{figure}

The following proposition produces 1,1-square-tiled surfaces in $\even(2k,2)$, for $k\geq 4$.

\begin{proposition}\label{2Even}
The permutations
\begin{equation}
\left(\begin{matrix}
0&1&2&3&4&5&6&7&8&9&10&11&12 \\
2&7&6&5&3&8&4&10&12&9&1&11&0
\end{matrix}\right),
\end{equation}
and, for $k\geq 5$,
\begin{equation}
\begin{multlined}
\left(\begin{matrix}
0&1&2&3&4&5&6&7&8&9&10&11&\cdots&2k-4&2k-3\\
2&7&6&5&3&9&4&11&8&13&10&15&\cdots&2k-4&2k
\end{matrix}\color{white}\right) \\
\hspace{2cm}\color{white}\left(\color{black}\begin{matrix}
2k-2&2k-1&2k&2k+1&2k+2&2k+3&2k+4 \\
2k-2&2k+2&2k+4&2k+1&1&2k+3&0
\end{matrix}\right)
\end{multlined}
\end{equation}
represent 1,1-square-tiled surfaces in $\even(8,2)$ and $\even(2k,2)$, respectively. Moreover, these surfaces have the minimum number of squares necessary for their respective strata.
\end{proposition}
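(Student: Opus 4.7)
The plan is to reproduce the argument of Proposition~\ref{2Odd} essentially verbatim, replacing the odd base permutations (\ref{odd4}) and (\ref{oddmin}) by the even base permutations (\ref{6even}) and (\ref{2g-2even}) from Proposition~\ref{Evenmin}. Specifically, I would first realise the permutation for $\even(8,2)$ as the filling pair diagram obtained from the filling pair diagram representing permutation (\ref{6even}) for $\even(6)$ by attaching, on the right-hand side, the five-vertex $\calH(2)$ gadget of Figure~\ref{f:H2}; and similarly for $\even(2k,2)$ starting from permutation (\ref{2g-2even}).

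Next I would verify the stratum by counting boundary-component sides of the associated ribbon graph. The attached gadget contributes five new vertices, ten new edges, and hence twenty new sides to the boundary components. As in the proof of Proposition~\ref{2Odd}, twelve of these sides form a new boundary component with $12$ sides, yielding a zero of order $2$, while the remaining eight sides are concatenated with the pre-existing boundary component whose order was previously $2k-2$, producing a zero of order $2k$. Hence the resulting surfaces lie in $\calH(8,2)$ and $\calH(2k,2)$ respectively.

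For the spin parity I would examine how the polygonal decomposition changes when the gadget is attached, in direct analogy with Figure~\ref{2odd2}. The eight new sides appear as a single connected arc of the polygon boundary, so any homology representative crossing them picks up an index change of $\pm 2$ and therefore contributes $0\bmod 2$ to the Arf calculation. The two additional genera brought in by the gadget admit standard symplectic generators $\alpha_{g+1},\beta_{g+1},\alpha_{g+2},\beta_{g+2}$ each of index $1$, and so contribute $(1+1)(1+1)\equiv 0\bmod 2$ each. Thus the spin parity is preserved under the gadget attachment, and since the base surfaces from Proposition~\ref{Evenmin} have even spin parity, so do the resulting 1,1-square-tiled surfaces.

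Finally, an Euler-characteristic count gives $2g+n-2=12$ for $\calH(8,2)$ and $2g+n-2=2k+4$ for $\calH(2k,2)$, matching the number of squares in the proposed permutations. The only potential subtlety is ensuring that the combinatorial and index-theoretic behaviour of the $\calH(2)$ gadget is indeed insensitive to whether it is attached to an odd- or even-parity base case; this holds because the relevant calculation is entirely local at the attachment point, and was carried out in sufficient generality in the proof of Proposition~\ref{2Odd} to apply in the present setting without modification.
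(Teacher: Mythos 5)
Your proposal is correct and follows exactly the paper's own route: the paper proves this proposition by declaring it analogous to Proposition~\ref{2Odd}, attaching the $\calH(2)$ gadget of Figure~\ref{f:H2} to the filling pair diagrams of the permutations from Proposition~\ref{Evenmin}, which is precisely what you do (with the boundary-side count, parity-preservation argument, and square count spelled out in more detail than the paper bothers to record).
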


\begin{proof}
The proof is analogous to the proof of Proposition~\ref{2Odd}. That is, we add the filling pair diagram combinatorics shown in Figure~\ref{f:H2} in the same way as above to the filling pair diagrams representing the permutations of Proposition~\ref{Evenmin}.
\end{proof}

As above, we have a number of exceptional cases not covered by these propositions. These are resolved as follows. The permutation
\[\left(\begin{matrix}
0&1&2&3&4&5&6 \\
2&4&6&3&1&5&0
\end{matrix}\right)\]
represents $\odd(2,2)$. The permutations
\[\left(\begin{matrix}
0&1&2&3&4&5&6&7&8&9 \\
2&8&6&9&4&1&3&5&7&0
\end{matrix}\right)\text{   and   }\left(\begin{matrix}
0&1&2&3&4&5&6&7&8&9 \\
2&9&8&7&6&3&5&1&4&0
\end{matrix}\right)\]
represent $\odd(2,2,2)$ and $\even(2,2,2)$, respectively. The permutation
\[\left(\begin{matrix}
0&1&2&3&4&5&6&7&8&9&10&11&12 \\
2&5&4&1&12&3&10&7&11&9&6&8&0
\end{matrix}\right)\]
represents $\even(2,2,2,2)$. The permutations
\[\left(\begin{matrix}
0&1&2&3&4&5&6&7&8 \\
2&5&8&3&6&4&1&7&0
\end{matrix}\right)\text{   and   }\left(\begin{matrix}
0&1&2&3&4&5&6&7&8 \\
2&4&1&8&7&5&3&6&0
\end{matrix}\right)\]
represent $\odd(4,2)$ and $\even(4,2)$, respectively. The permutation
\[\left(\begin{matrix}
0&1&2&3&4&5&6&7&8&9&10&11 \\
2&8&5&3&1&10&9&6&4&11&7&0
\end{matrix}\right)\]
represents $\even(4,2,2)$.
Finally, the permutation
\[\left(\begin{matrix}
0&1&2&3&4&5&6&7&8&9&10 \\
2&10&9&8&6&3&5&1&4&7&0
\end{matrix}\right)\]
represents $\even(6,2)$. It is easily checked that these permutation representatives do indeed represent 1,1-square-tiled surfaces in the required strata. That they represent surfaces in the correct connected component can be computed using the algorithm described by Zorich discussed in Subsection~\ref{permreps} or verified computationally. Using these surfaces and the those given by the propositions proved in this subsection, we can produce 1,1-square-tiled surfaces in the even and odd components of all strata of Abelian differentials that have even order zeros. This completes the work of this subsection.

\subsection{Odd order zeros}\label{Odd}

In this subsection, we will construct 1,1-square-tiled surfaces in all strata of Abelian differentials with odd order zeros. More specifically, if the stratum is not connected, we will construct them in the nonhyperelliptic component. To do this, we must construct the base cases $\calH(2j+1,2k+1)$. Similar to the difficulties caused by the hyperellipticity of $\calH(2)$, we must in this subsection also deal with the hyperellipticity of $\calH(1,1)$.

We remark that any 1,1-square-tiled surface lying in $\calH(g-1,g-1)$ constructed in this subsection will be contained in $\non(g-1,g-1)$ since it will be built from strictly fewer squares than that required by Proposition~\ref{hypmin} for a 1,1-square-tiled surface in $\hyp(g-1,g-1)$. \\

\paragraph*{{\bf Strata of the form $\bs{\calH(2j+1,2k+1)}$}}

Before giving the proofs of the constructions in this subsection, we give examples of the methods used in Propositions~\ref{RS} and~\ref{LS} below. We will see how we can use the 1,1-square-tiled surfaces representing $\odd(4)$ and $\odd(6)$ constructed in Proposition~\ref{Oddmin} to construct 1,1-square-tiled surfaces in $\calH(5,3)$, $\calH(5,5)$, $\calH(7,3)$ and $\calH(9,3)$.

To construct a 1,1-square-tiled surface in $\calH(5,3)$, we begin with two copies of the 1,1-square-tiled surface in $\odd(4)$ given by Proposition~\ref{Oddmin} as in the top of Figure~\ref{RSex}. We then perform cylinder concatenation on these two surfaces to obtain the 1,1-square-tiled surface shown in the middle of Figure~\ref{RSex}. Now consider in this surface the square (shaded grey) that lies to the right of the square that was the final square of the left-hand surface used in the cylinder concatenation. We swap this square with the one to its right while remembering the  vertical identifications and obtain the surface shown at the bottom of Figure~\ref{RSex}. The resulting surface is still a 1,1-square-tiled surface and it can be checked that this surface lies in $\calH(5,3)$. If we instead perform this operation with the left-hand surface being the 1,1-square-tiled surface in $\odd(6)$ given by Proposition~\ref{Oddmin} then it can be checked that the resulting surface lies in $\calH(5,5)$. We shall call such a procedure a {\em right-swap concatenation}. The investigation of the outcomes of this procedure in general is the content of Proposition~\ref{RS}.

\begin{center}
\begin{figure}[H]
\begin{center}
\begin{tikzpicture}[scale = 1.25]
\draw [line width = 0.3mm, line cap = round] (0,0)--node[left]{0}(0,1);
\draw [line width = 0.3mm, line cap = round] (0,1)--node[above]{1}(1,1);
\draw [line width = 0.3mm, line cap = round] (1,1)--node[above]{2}(2,1);
\draw [line width = 0.3mm, line cap = round] (2,1)--node[above]{3}(3,1);
\draw [line width = 0.3mm, line cap = round] (3,1)--node[above]{4}(4,1);
\draw [line width = 0.3mm, line cap = round] (4,1)--node[above]{5}(5,1);
\draw [line width = 0.3mm, line cap = round] (5,1)--node[right]{0}(5,0);
\draw [line width = 0.3mm, line cap = round] (5,0)--node[below]{3}(4,0);
\draw [line width = 0.3mm, line cap = round] (4,0)--node[below]{1}(3,0);
\draw [line width = 0.3mm, line cap = round] (3,0)--node[below]{4}(2,0);
\draw [line width = 0.3mm, line cap = round] (2,0)--node[below]{5}(1,0);
\draw [line width = 0.3mm, line cap = round] (1,0)--node[below]{2}(0,0);
\draw [densely dashed](1,0)--(1,1);
\draw [densely dashed](2,0)--(2,1);
\draw [densely dashed](3,0)--(3,1);
\draw [densely dashed](4,0)--(4,1);
\draw [line width = 0.3mm, color = red] (0,0.5)--(5,0.5);
\draw [line width = 0.3mm, color = blue] (0.5,0)--(0.5,1);
\draw [line width = 0.3mm, color = blue] (1.5,0)--(1.5,1);
\draw [line width = 0.3mm, color = blue] (2.5,0)--(2.5,1);
\draw [line width = 0.3mm, color = blue] (3.5,0)--(3.5,1);
\draw [line width = 0.3mm, color = blue] (4.5,0)--(4.5,1);

\draw [line width = 0.3mm, line cap = round] (0+6,0)--node[left]{0}(0+6,1);
\draw [line width = 0.3mm, line cap = round] (0+6,1)--node[above]{1}(1+6,1);
\draw [line width = 0.3mm, line cap = round] (1+6,1)--node[above]{2}(2+6,1);
\draw [line width = 0.3mm, line cap = round] (2+6,1)--node[above]{3}(3+6,1);
\draw [line width = 0.3mm, line cap = round] (3+6,1)--node[above]{4}(4+6,1);
\draw [line width = 0.3mm, line cap = round] (4+6,1)--node[above]{5}(5+6,1);
\draw [line width = 0.3mm, line cap = round] (5+6,1)--node[right]{0}(5+6,0);
\draw [line width = 0.3mm, line cap = round] (5+6,0)--node[below]{3}(4+6,0);
\draw [line width = 0.3mm, line cap = round] (4+6,0)--node[below]{1}(3+6,0);
\draw [line width = 0.3mm, line cap = round] (3+6,0)--node[below]{4}(2+6,0);
\draw [line width = 0.3mm, line cap = round] (2+6,0)--node[below]{5}(1+6,0);
\draw [line width = 0.3mm, line cap = round] (1+6,0)--node[below]{2}(0+6,0);
\draw [densely dashed](1+6,0)--(1+6,1);
\draw [densely dashed](2+6,0)--(2+6,1);
\draw [densely dashed](3+6,0)--(3+6,1);
\draw [densely dashed](4+6,0)--(4+6,1);
\draw [line width = 0.3mm, color = red] (6,0.5)--(11,0.5);
\draw [line width = 0.3mm, color = green] (6.5,0)--(6.5,1);
\draw [line width = 0.3mm, color = green] (7.5,0)--(7.5,1);
\draw [line width = 0.3mm, color = green] (8.5,0)--(8.5,1);
\draw [line width = 0.3mm, color = green] (9.5,0)--(9.5,1);
\draw [line width = 0.3mm, color = green] (10.5,0)--(10.5,1);
\end{tikzpicture}
\end{center}
\begin{center}
\begin{tikzpicture}[scale = 1.25]
\draw [line width = 0.3mm, line cap = round] (0,0)--node[left]{0}(0,1);
\draw [line width = 0.3mm, line cap = round] (0,1)--node[above]{1}(1,1);
\draw [line width = 0.3mm, line cap = round] (1,1)--node[above]{2}(2,1);
\draw [line width = 0.3mm, line cap = round] (2,1)--node[above]{3}(3,1);
\draw [line width = 0.3mm, line cap = round] (3,1)--node[above]{4}(4,1);
\draw [line width = 0.3mm, line cap = round] (4,1)--node[above]{5}(5,1);
\draw [line width = 0.3mm, line cap = round] (5,0)--node[below]{3}(4,0);
\draw [line width = 0.3mm, line cap = round] (4,0)--node[below]{6}(3,0);
\draw [line width = 0.3mm, line cap = round] (3,0)--node[below]{4}(2,0);
\draw [line width = 0.3mm, line cap = round] (2,0)--node[below]{5}(1,0);
\draw [line width = 0.3mm, line cap = round] (1,0)--node[below]{2}(0,0);
\draw [line width = 0.3mm, line cap = round] (0+5,1)--node[above]{6}(1+5,1);
\draw [line width = 0.3mm, line cap = round] (1+5,1)--node[above]{7}(2+5,1);
\draw [line width = 0.3mm, line cap = round] (2+5,1)--node[above]{8}(3+5,1);
\draw [line width = 0.3mm, line cap = round] (3+5,1)--node[above]{9}(4+5,1);
\draw [line width = 0.3mm, line cap = round] (4+5,1)--node[above]{10}(5+5,1);
\draw [line width = 0.3mm, line cap = round] (5+5,1)--node[right]{0}(5+5,0);
\draw [line width = 0.3mm, line cap = round] (5+5,0)--node[below]{8}(4+5,0);
\draw [line width = 0.3mm, line cap = round] (4+5,0)--node[below]{1}(3+5,0);
\draw [line width = 0.3mm, line cap = round] (3+5,0)--node[below]{9}(2+5,0);
\draw [line width = 0.3mm, line cap = round] (2+5,0)--node[below]{10}(1+5,0);
\draw [line width = 0.3mm, line cap = round] (1+5,0)--node[below]{7}(0+5,0);
\draw [densely dashed](1,0)--(1,1);
\draw [densely dashed](2,0)--(2,1);
\draw [densely dashed](3,0)--(3,1);
\draw [densely dashed](4,0)--(4,1);
\draw [densely dashed](5,0)--(5,1);
\draw [densely dashed](1+5,0)--(1+5,1);
\draw [densely dashed](2+5,0)--(2+5,1);
\draw [densely dashed](3+5,0)--(3+5,1);
\draw [densely dashed](4+5,0)--(4+5,1);
\draw [fill = gray, draw opacity = 0, fill opacity = 0.25] (5,0) -- (6,0) -- (6,1) -- (5,1) -- cycle;
\draw [line width = 0.3mm, color = red] (0,0.5)--(10,0.5);
\draw [line width = 0.3mm, color = blue] (0.5,0)--(0.5,1);
\draw [line width = 0.3mm, color = blue] (1.5,0)--(1.5,1);
\draw [line width = 0.3mm, color = blue] (2.5,0)--(2.5,1);
\draw [line width = 0.3mm, color = blue] (3.5,0)--(3.5,1);
\draw [line width = 0.3mm, color = blue] (4.5,0)--(4.5,1);
\draw [line width = 0.3mm, color = blue] (5.5,0)--(5.5,1);
\draw [line width = 0.3mm, color = blue] (6.5,0)--(6.5,1);
\draw [line width = 0.3mm, color = blue] (7.5,0)--(7.5,1);
\draw [line width = 0.3mm, color = blue] (8.5,0)--(8.5,1);
\draw [line width = 0.3mm, color = blue] (9.5,0)--(9.5,1);
\end{tikzpicture}
\end{center}
\begin{center}
\begin{tikzpicture}[scale = 1.25]
\draw [line width = 0.3mm, line cap = round] (0,0)--node[left]{0}(0,1);
\draw [line width = 0.3mm, line cap = round] (0,1)--node[above]{1}(1,1);
\draw [line width = 0.3mm, line cap = round] (1,1)--node[above]{2}(2,1);
\draw [line width = 0.3mm, line cap = round] (2,1)--node[above]{3}(3,1);
\draw [line width = 0.3mm, line cap = round] (3,1)--node[above]{4}(4,1);
\draw [line width = 0.3mm, line cap = round] (4,1)--node[above]{5}(5,1);
\draw [line width = 0.3mm, line cap = round] (5,0)--node[below]{3}(4,0);
\draw [line width = 0.3mm, line cap = round] (4,0)--node[below]{6}(3,0);
\draw [line width = 0.3mm, line cap = round] (3,0)--node[below]{4}(2,0);
\draw [line width = 0.3mm, line cap = round] (2,0)--node[below]{5}(1,0);
\draw [line width = 0.3mm, line cap = round] (1,0)--node[below]{2}(0,0);
\draw [line width = 0.3mm, line cap = round] (0+5,1)--node[above]{7}(1+5,1);
\draw [line width = 0.3mm, line cap = round] (1+5,1)--node[above]{6}(2+5,1);
\draw [line width = 0.3mm, line cap = round] (2+5,1)--node[above]{8}(3+5,1);
\draw [line width = 0.3mm, line cap = round] (3+5,1)--node[above]{9}(4+5,1);
\draw [line width = 0.3mm, line cap = round] (4+5,1)--node[above]{10}(5+5,1);
\draw [line width = 0.3mm, line cap = round] (5+5,1)--node[right]{0}(5+5,0);
\draw [line width = 0.3mm, line cap = round] (5+5,0)--node[below]{8}(4+5,0);
\draw [line width = 0.3mm, line cap = round] (4+5,0)--node[below]{1}(3+5,0);
\draw [line width = 0.3mm, line cap = round] (3+5,0)--node[below]{9}(2+5,0);
\draw [line width = 0.3mm, line cap = round] (2+5,0)--node[below]{7}(1+5,0);
\draw [line width = 0.3mm, line cap = round] (1+5,0)--node[below]{10}(0+5,0);
\draw [densely dashed](1,0)--(1,1);
\draw [densely dashed](2,0)--(2,1);
\draw [densely dashed](3,0)--(3,1);
\draw [densely dashed](4,0)--(4,1);
\draw [densely dashed](5,0)--(5,1);
\draw [densely dashed](1+5,0)--(1+5,1);
\draw [densely dashed](2+5,0)--(2+5,1);
\draw [densely dashed](3+5,0)--(3+5,1);
\draw [densely dashed](4+5,0)--(4+5,1);
\draw [fill = gray, draw opacity = 0, fill opacity = 0.25] (6,0) -- (7,0) -- (7,1) -- (6,1) -- cycle;
\draw [line width = 0.3mm, color = red] (0,0.5)--(10,0.5);
\draw [line width = 0.3mm, color = blue] (0.5,0)--(0.5,1);
\draw [line width = 0.3mm, color = blue] (1.5,0)--(1.5,1);
\draw [line width = 0.3mm, color = blue] (2.5,0)--(2.5,1);
\draw [line width = 0.3mm, color = blue] (3.5,0)--(3.5,1);
\draw [line width = 0.3mm, color = blue] (4.5,0)--(4.5,1);
\draw [line width = 0.3mm, color = blue] (5.5,0)--(5.5,1);
\draw [line width = 0.3mm, color = blue] (6.5,0)--(6.5,1);
\draw [line width = 0.3mm, color = blue] (7.5,0)--(7.5,1);
\draw [line width = 0.3mm, color = blue] (8.5,0)--(8.5,1);
\draw [line width = 0.3mm, color = blue] (9.5,0)--(9.5,1);
\end{tikzpicture}
\end{center}
\caption{Construction of a 1,1-square-tiled surface in $\calH(5,3)$.}\label{RSex}
\end{figure}
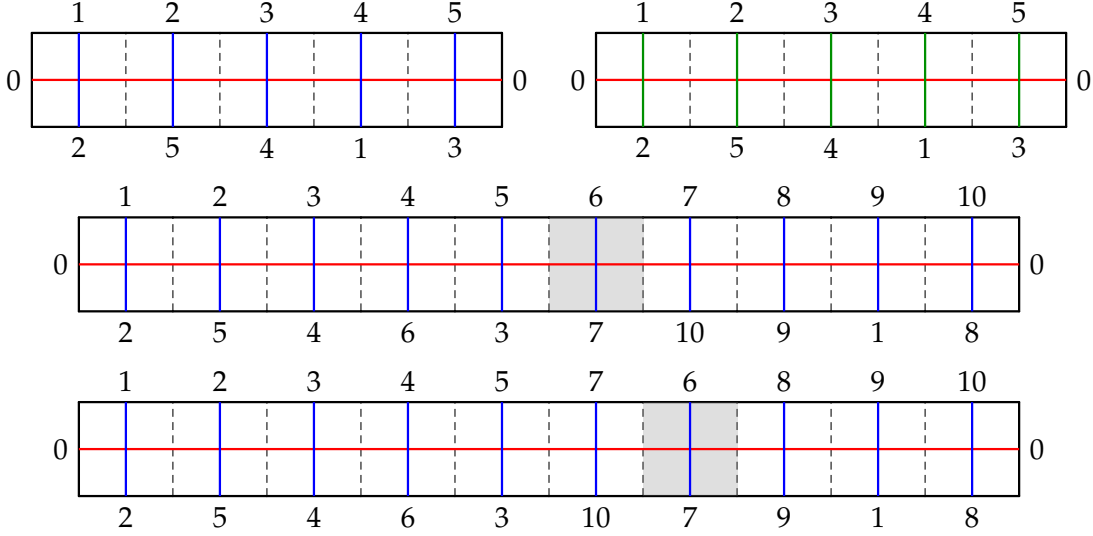
\end{center}

Alternatively, suppose that we start with the left-hand surface being the 1,1-square-tiled surface in $\odd(6)$ and the right-hand surface being the 1,1-square-tiled surface in $\odd(4)$ given by Proposition~\ref{Oddmin}. Now, after performing the cylinder concatenation, we will instead swap the shaded square with the square to its left. It can be checked that we obtain a 1,1-square-tiled surface in $\calH(7,3)$. Performing this operation with both surfaces being the 1,1-square-tiled surface in $\odd(6)$ results in a 1,1-square-tiled surface in $\calH(9,3)$. We shall call such a procedure a {\em left-swap concatenation}. The investigation of the outcomes of this procedure in general is the content of Proposition~\ref{LS}.

\begin{proposition}\label{RS}
For $j,k\geq 1$, let $S_{1}\in\odd(4j)$ and $S_{2}\in\odd(4k)$ be 1,1-square-tiled surfaces constructed from the permutation representatives given by Proposition~\ref{Oddmin}. Then the surface $S$ obtained by performing right-swap concatenation on these surfaces is a 1,1-square-tiled surface in $\calH(2(j+k)+1,2(j+k)-1)$. Recall that this stratum is nonempty and connected.

If instead we choose $S_{1}$ to be a 1,1-square-tiled surface in $\odd(4j+2)$ constructed from the permutation representative given by Proposition~\ref{Oddmin}, then the surface $S$ obtained by performing right-swap concatenation on these surfaces is a 1,1-square-tiled surface in $\non(2(j+k)+1,2(j+k)+1)$.

Moreover, the square-tiled surfaces produced have the minimum number of squares required for their respective strata.
\end{proposition}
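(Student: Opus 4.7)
\emph{Proof proposal.}
The plan is to proceed in three steps: first concatenate $S_1$ and $S_2$ via Lemma~\ref{comb1}, then analyze the right-swap as a local modification of the concatenated surface, and finally exclude the hyperelliptic locus in the second case by a square count. For the first step, the permutations supplied by Proposition~\ref{Oddmin} have top rows beginning $0,1,2$ and bottom rows beginning with $2$, so the hypotheses of Lemma~\ref{comb1} are satisfied. Cylinder concatenation therefore produces a 1,1-square-tiled surface in $\calH(4j,4k)$ (respectively $\calH(4j+2,4k)$) on $4(j+k)+2$ (respectively $4(j+k)+4$) squares, which is minimal for the ambient stratum.

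Next, the right-swap exchanges the squares at positions $N+1$ and $N+2$ of the concatenated cylinder --- the first two squares coming from $S_2$'s contribution --- carrying their vertical identifications along. Algebraically this conjugates the vertical permutation $v$ by the transposition of these two labels while $h = (1,2,\ldots,n)$ is left intact; in particular the total number of squares is preserved, and the vertical cylinder remains a single cycle, as can be checked by direct inspection using the explicit forms of $v_1$ and $v_2$ from Proposition~\ref{Oddmin}. To pin down the cycle type of the new commutator $[h,v']$, equivalently the boundary components of the ribbon graph of the associated filling pair diagram, I would trace the boundary walks. The swap reroutes precisely two vertical edges, so the two original boundary components (of lengths $4(4j+1)$ and $4(4k+1)$ in the first case, $4(4j+3)$ and $4(4k+1)$ in the second) get merged and then re-split into two new components. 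Using the regular pattern of the filling pair diagrams from Proposition~\ref{Oddmin}, these new lengths come out to $8(j+k)+8$ and $8(j+k)$ in the first case, and $8(j+k)+8$ twice in the second, corresponding to zero orders $2(j+k)+1,\,2(j+k)-1$ and $2(j+k)+1,\,2(j+k)+1$ respectively.

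The stratum $\calH(2(j+k)+1,2(j+k)-1)$ is connected, so the first case is complete at this point. In the second case the resulting surface has $4(j+k)+4$ squares and genus $g=2(j+k)+2$; by Proposition~\ref{hypmin} the hyperelliptic component $\hyp(g-1,g-1)$ requires at least $4g-2=8(j+k)+6$ squares, which strictly exceeds $4(j+k)+4$ for all $j,k\geq 1$, so $S$ must lie in $\non(2(j+k)+1,2(j+k)+1)$ as claimed, and minimality of the square count in both cases is immediate from the Euler characteristic bound $2g+n-2$. The main obstacle is the boundary re-wiring in the middle step: although the swap itself is a local move, the number of corners it redistributes between the two zeros grows with $|j-k|$, so the cleanest verification of the new boundary-component lengths will likely be via an explicit global trace of the ribbon-graph boundary walks through the unswapped portions of the diagram, exploiting the periodic vertical-edge pattern inherent to the surfaces of Proposition~\ref{Oddmin}.
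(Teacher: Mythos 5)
Your proposal is correct and follows essentially the same route as the paper: concatenate via Lemma~\ref{comb1}, interpret the square swap as a transposition of the first two vertices contributed by $S_{2}$ in the filling pair diagram, trace the ribbon-graph boundary walks to get components of $8(j+k)+8$ and $8(j+k)$ sides (resp.\ $8(j+k)+8$ twice), and rule out $\hyp(g-1,g-1)$ by comparing the square count with Proposition~\ref{hypmin}. The boundary-walk bookkeeping you defer to the end is precisely what the paper carries out in Figures~\ref{rightswap1} and~\ref{rightswap2}, with the side-counts obtained by induction on the diagrams of Proposition~\ref{Oddmin}.
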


\begin{proof}
We will view the process of right-swap concatenation from the point of view of filling pair diagrams. The filling pair diagram of a surface produced as in Lemma~\ref{comb1} is the end to end concatenation of the filling pair diagrams of the constituent surfaces where the edge that would have returned to the top of vertex 1 on the filling pair diagram of the first surface is connected to the top of what was vertex 1 on the second surface and vice versa. After this, the square swap corresponds to a vertex transposition of the vertices that were the first two vertices of the filling pair diagram of the second surface. We will keep track of the boundary components of the associated ribbon graph to determine the stratum of the resulting surface.

The boundary components of the ribbon graph associated to the filling pair diagram of the surface obtained by Lemma~\ref{comb1} are shown in Figure~\ref{rightswap1}. We read the diagram as follows. The two boundary components are represented by different line types. Following the orientation designated by the arrows, one counts the sides of the boundary components by starting at the outward arrow labelled by 1. We count this outgoing side. Then we continue to the next side labelled by 1 and of the same line type. If an incoming side has the same orientation (vertical or horizontal) as the outgoing side with the same label, then we do no count this incoming side, otherwise we do. We then add on the number of sides shown in brackets next to this incoming side. These numbers can be calculated by induction on the filling pair diagrams of Proposition~\ref{Oddmin}. We continue to count sides until we reach the next outgoing side and repeat as above. This continues until we return to where we started, that is, the outgoing side labelled by 1.

\begin{figure}[H]
\begin{center}
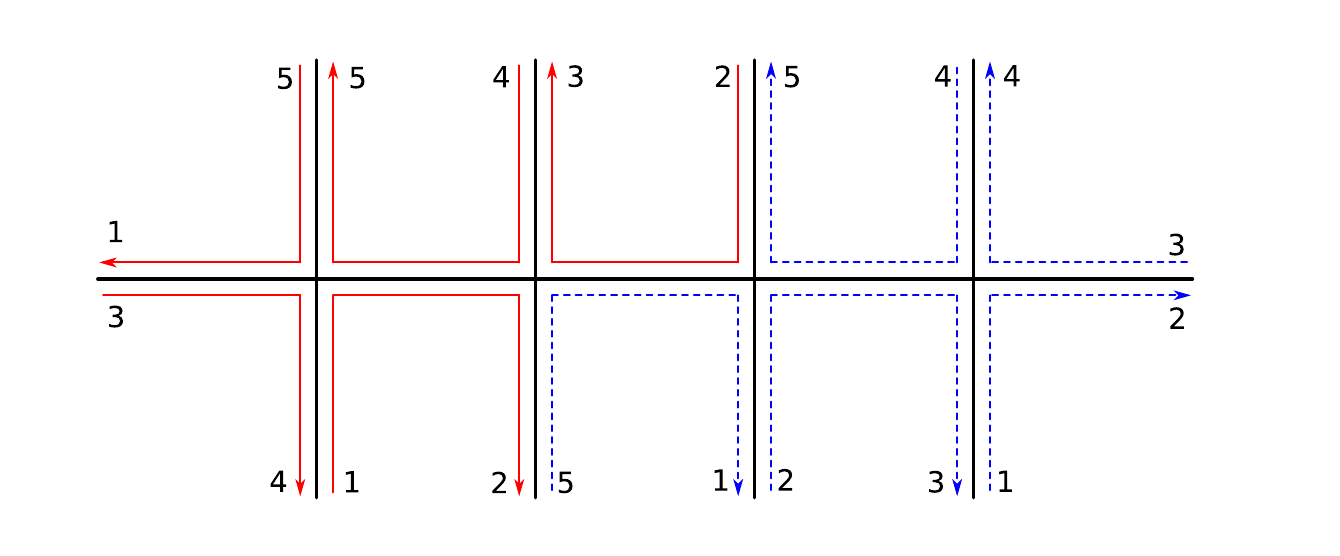
\end{center}
\caption{Boundary components of the ribbon graph before vertex transposition.}
\label{rightswap1}
\end{figure}

\begin{figure}[H]
\begin{center}
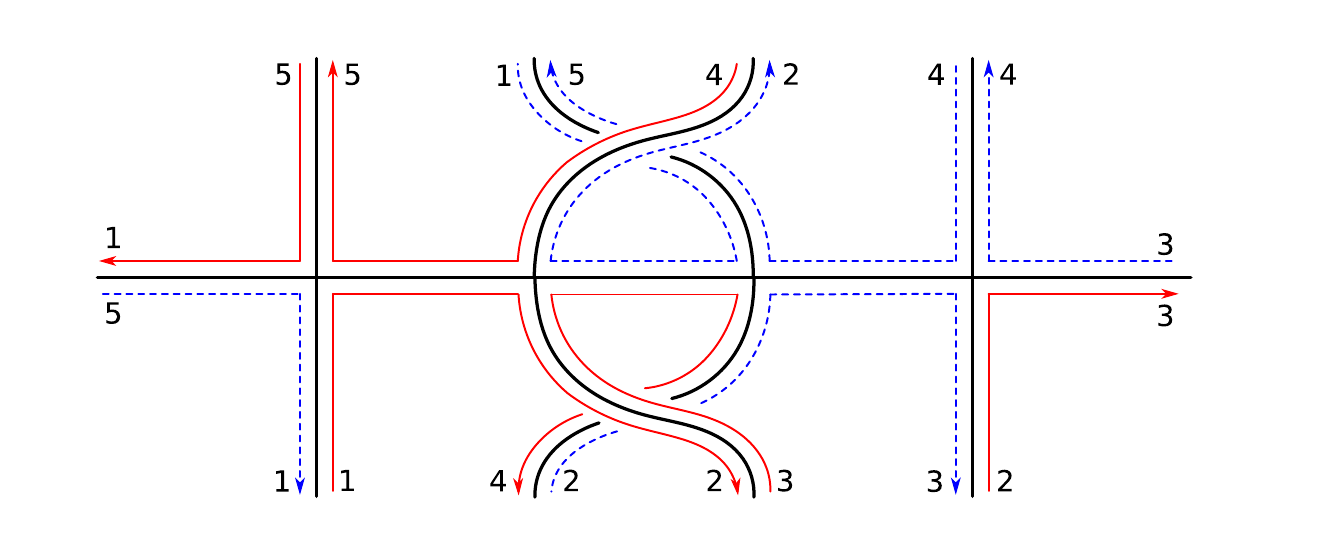
\end{center}
\caption{Boundary components of the ribbon graph after vertex transposition.}
\label{rightswap2}
\end{figure}

Note that in the diagram we have
\[x = \left\lbrace\begin{matrix}
8j-6,&\text{ for }\odd(4j),\\
8j+2, &\text{ for }\odd(4j+2).
\end{matrix}\color{white}\right\rbrace\]
It is then easy to see that the boundary components do give rise to zeros of the correct orders.

The effect of the vertex transposition on the boundary components of the ribbon graph associated to the filling pair diagram is shown in Figure~\ref{rightswap2}. We see that we have one boundary component with $8(j+k)+8$ sides corresponding to a zero of order $2(j+k)+1$, and a second boundary component with $8(j+k)$ sides corresponding to a zero of order $2(j+k)-1$, if $x = 8j-6$, or $8(j+k)+8$ sides corresponding to a zero of order $2(j+k)+1$, if $x = 8j+2$. As we have already shown that a 1,1-square-tiled surface in the hyperelliptic component of the stratum $\calH(g-1,g-1)$ requires strictly more than the minimum numbers of squares. It is clear that the 1,1-square-tiled surfaces we have produced representing $\calH(2(j+k)+1,2(j+k)+1)$ are in the nonhyperelliptic component since these surfaces have the minimum number of squares required for their respective strata which completes the proof of the proposition.
\end{proof}

Fixing $k=1$ in the above proposition, then choosing any $j\geq 1$ and applying the above construction using $\odd(4j)$ gives us 1,1-square-tiled surfaces in the strata $\calH(2j+3,2j+1)$, for $j\geq 1$. If instead we apply the above construction using $\odd(4j+2)$ then we produce 1,1-square-tiled surfaces in the nonhyperelliptic components of the strata $\calH(2j+3,2j+3)$, for $j\geq 1$. The following permutation, not produced by the above proposition, represents a 1,1-square-tiled surface in $\non(3,3)$
\[\left(\begin{matrix}
0&1&2&3&4&5&6&7&8 \\
2&8&6&5&7&4&1&3&0
\end{matrix}\right).\]

\begin{proposition}\label{LS}
For $j,k\geq 1$, let $S_{1}\in\odd(4j+2)$ and $S_{2}\in\odd(4k)$ be 1,1-square-tiled surfaces constructed from the permutation representatives given by Proposition~\ref{Oddmin}. Then the surface $S$ obtained by performing left-swap concatenation on these surfaces is a 1,1-square-tiled surface in $\calH(2(j+2k)+1,2j+1)$. Recall that this stratum is nonempty and connected.

If instead we choose $S_{2}$ to be a 1,1-square-tiled surface in $\odd(4k+2)$ constructed from the permutation representative given by Proposition~\ref{Oddmin}, then the surface $S$ obtained by performing left-swap concatenation on these surfaces is a 1,1-square-tiled surface in $\calH(2(j+2k)+3,2j+1)$. This stratum is also nonempty and connected.

Moreover, the square-tiled surfaces produced have the minimum number of squares required for their respective strata.
\end{proposition}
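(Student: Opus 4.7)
The plan is to mirror the proof of Proposition~\ref{RS}, but with the transposition corresponding to the left-swap rather than the right-swap. Concretely, I would first apply Lemma~\ref{comb1} to combine the filling pair diagrams of $S_1$ and $S_2$ end-to-end into the filling pair diagram of a 1,1-square-tiled surface $S_0$. The left-swap operation then amounts to a specific vertex transposition on $S_0$: it exchanges the last vertex of the $S_1$ portion with the first vertex of the $S_2$ portion, whereas the right-swap exchanged the first and second vertices of the $S_2$ portion.

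The core of the argument is to track the boundary components of the ribbon graph associated to the filling pair diagram through this transposition. I would produce analogues of Figures~\ref{rightswap1} and~\ref{rightswap2}, annotating the outgoing and incoming edges of each boundary circuit of the pre-swap diagram with the number of intervening sides contributed by each portion of $S_1$ and $S_2$. These intervening side counts can be computed inductively, exactly as in the proof of Proposition~\ref{RS}, using the explicit combinatorics of the filling pair diagrams of Proposition~\ref{Oddmin} and the fact that the ribbon graphs for $\odd(4j+2)$ and $\odd(4k)$ have predictable block structure.

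I then expect that performing the left-swap transposition reassembles the three boundary circuits of the pre-swap diagram into exactly two circuits, whose sizes depend on the parity type of $S_2$. In the first case, with $S_2 \in \odd(4k)$, the computation should produce one circuit of $8(j+2k)+8$ sides (a zero of order $2(j+2k)+1$) and one of $8j+8$ sides (a zero of order $2j+1$); in the second case, with $S_2 \in \odd(4k+2)$, the longer circuit should gain an extra block and have $8(j+2k)+16$ sides (a zero of order $2(j+2k)+3$), while the shorter circuit is unchanged. The main obstacle is the careful combinatorial bookkeeping required to verify that the outgoing-to-incoming edge identifications after the left-swap really produce precisely these two circuits, since the asymmetry between the two kinds of swaps means the reassembly is genuinely different from the right-swap case.

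Finally, minimality follows automatically: both Lemma~\ref{comb1} and the vertex swap preserve the square count, while $S_1$ and $S_2$ already use the minimal number of squares by Proposition~\ref{Oddmin}, and a straightforward arithmetic check confirms that the resulting total matches the Euler characteristic bound $2g+n-2$ for the target strata. No spin parity calculation is needed, because in both cases the two zero orders differ whenever $k \geq 1$, so the target strata are connected by the Kontsevich--Zorich classification.
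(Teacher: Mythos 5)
Your proposal follows essentially the same route as the paper: the paper's proof is literally ``completely analogous to the proof of Proposition~\ref{RS}'', tracking boundary components of the ribbon graph through the vertex transposition, and your identification of the left-swap as transposing the last vertex of the $S_{1}$ block with the first vertex of the $S_{2}$ block, your predicted side counts for the two resulting boundary circuits, and your treatment of minimality and connectedness all match. One small slip: the pre-swap diagram has \emph{two} boundary circuits, not three, since by Lemma~\ref{comb1} the concatenated surface lies in $\calH(4j+2,4k)$ (resp.\ $\calH(4j+2,4k+2)$), which has exactly two zeros; the swap redistributes the sides of these two circuits into the two circuits you describe. This does not affect the structure of your argument, and the bookkeeping you defer is exactly what the paper defers to its Figures~\ref{leftswap1} and~\ref{leftswap2}.
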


\begin{proof}
The proof is completely analogous to the proof of Proposition~\ref{RS}. The required filling pair diagram information is shown in Figures~\ref{leftswap1} and~\ref{leftswap2}. In this case
\[x = \left\lbrace\begin{matrix}
8k-2,&\text{ for }\odd(4k),\\
8k+6, &\text{ for }\odd(4k+2).
\end{matrix}\color{white}\right\rbrace\]
\end{proof}

\begin{figure}[H]
\begin{center}
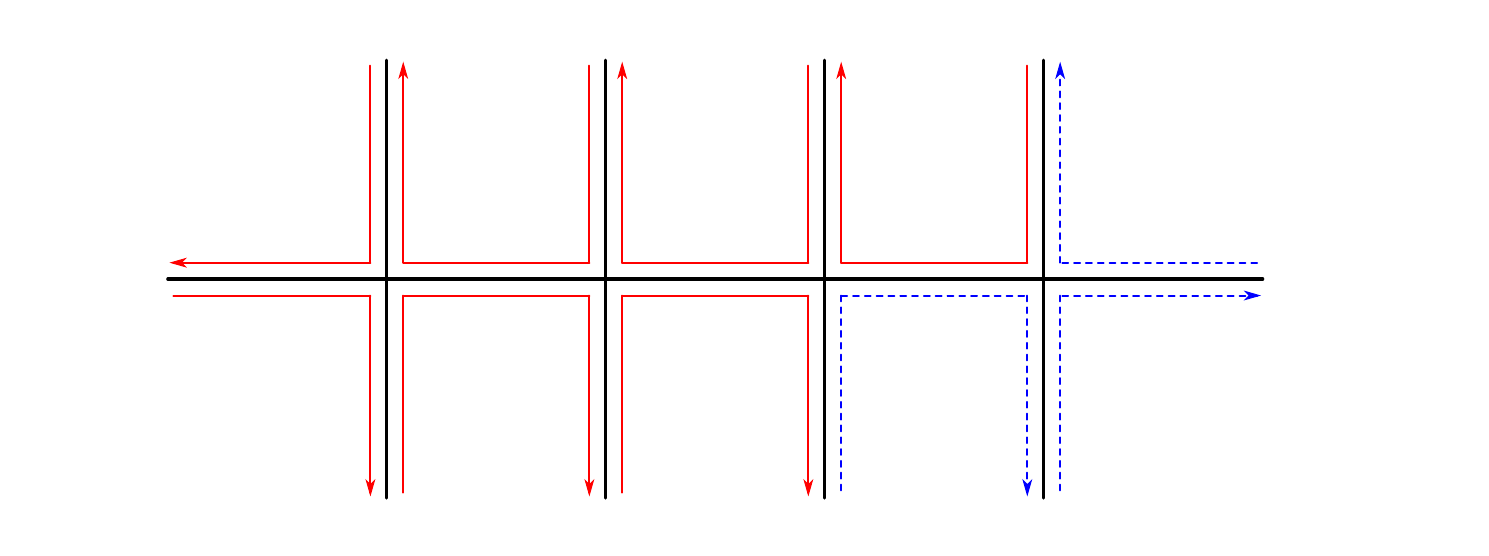
\end{center}
\caption{Boundary components of the ribbon graph before vertex transposition.}
\label{leftswap1}
\end{figure}

\begin{figure}[H]
\begin{center}
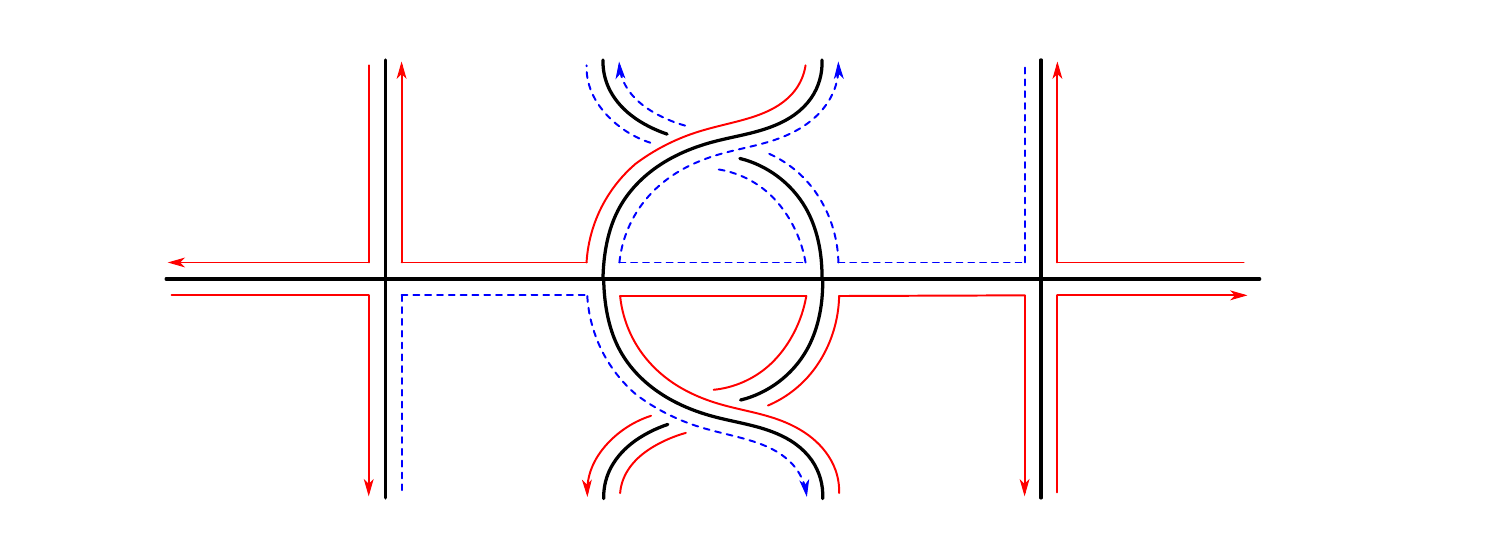
\end{center}
\caption{Boundary components of the ribbon graph after vertex transposition.}
\label{leftswap2}
\end{figure}

Observe that we have
\[2(j+2k)+1-(2j+1) = 4k\,\,\,\,\,\,\text{and}\,\,\,\,\,\,2(j+2k)+3-(2j+1) = 4k+2,\]
and so, since we have $j,k\geq 1$, the above proposition allows us to construct 1,1-square-tiled surfaces in the strata $\calH(2j+1+2n,2j+1)$, for $j\geq 1$ and $n\geq 2$.

We have yet to construct 1,1-square-tiled surfaces in strata with zeros of order 1. We first construct such surfaces in strata with a pair of odd order zeros, only one of which is a zero of order 1.

\begin{proposition}\label{2g-3,1}
The permutations
\begin{equation}\label{7,1}
\left(\begin{matrix}
0&1&2&3&4&5&6&7&8&9&10 \\
2&5&4&9&3&8&6&1&10&7&0
\end{matrix}\right)
\end{equation}
and, for $k\geq 4$,
\begin{equation}\label{2k+1,1}
\begin{multlined}
\left(\begin{matrix}
0&1&2&3&4&5&6&7&8&9&\cdots&2k-4&2k-3 \\
2&5&4&7&3&9&6&11&8&13&\cdots&2k-4&2k+3
\end{matrix}\color{white}\right) \\
\hspace{2cm}\color{white}\left(\color{black}\begin{matrix}
2k-2&2k-1&2k&2k+1&2k+2&2k+3&2k+4 \\
2k-2&2k+2&2k&1&2k+4&2k+1&0
\end{matrix}\right)
\end{multlined}
\end{equation}
represent 1,1-square-tiled surfaces in $\calH(7,1)$ and $\calH(2k+1,1)$, respectively. Moreover, these surfaces have the minimum number of squares necessary for their respective strata.
\end{proposition}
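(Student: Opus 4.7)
The plan mirrors the strategy used in Propositions~\ref{2Odd} and~\ref{2Even}: start from the filling pair diagram representing permutation (\ref{oddmin}) and attach a small piece of combinatorics on the right-hand side that splits the original single zero into a pair of the desired orders. Observe first that both permutations have a 0 in the first position of the top row and the last position of the bottom row, so the associated square-tiled surface has a single horizontal cylinder. For the vertical cylinder, I would read off the permutation obtained by deleting the 0s; for (\ref{7,1}) one checks directly that this gives the cycle $(1,2,5,3,4,9,7,6,8,10)$, and for (\ref{2k+1,1}) the same kind of traversal shows a single $(2k+4)$-cycle, so in each case there is exactly one vertical cylinder.

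Next I would translate the permutations into filling pair diagrams and verify the strata by counting the boundary components of the associated ribbon graphs. The initial segment of (\ref{2k+1,1}) coincides with the segment of (\ref{oddmin}) that produces one boundary component with $8k-4$ sides (a zero of order $2k-2$). The modification at the tail adjoins five new vertices with the same schematic layout as in Figure~\ref{f:H2}, but with the vertex ordering adjusted so that the adjunction splits the existing boundary component in a controlled way: I would track the six new arcs emanating from the last five vertices and show that the result is two boundary components, one with $8k+8$ sides and one with $8$ sides, corresponding to zeros of orders $2k+1$ and $1$ respectively. The stratum is therefore $\calH(2k+1,1)$, as claimed. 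The same type of direct check works for permutation (\ref{7,1}), whose filling pair diagram can either be drawn by hand or obtained as the base case $k=3$ of the same construction, yielding boundary components of $32$ and $8$ sides for $\calH(7,1)$.

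The minimality statement is immediate from the Euler-characteristic lower bound stated in Section~\ref{con}: for $\calH(2k+1,1)$ we have $g=k+2$ and $n=2$, so the minimum number of squares is $2g+n-2 = 2k+4$, which is exactly the number of entries other than 0 in (\ref{2k+1,1}); similarly (\ref{7,1}) uses $10=2(5)+2-2$ squares for $\calH(7,1)$.

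The only non-routine step is pinning down the exact effect of the tail modification on the boundary components of the ribbon graph. The cleanest way to handle this, which I would adopt, is to encode the outgoing/incoming side counts on the affected piece of the diagram as in Figures~\ref{rightswap1} and~\ref{leftswap1}, so that the inductive nature of the tail in (\ref{2k+1,1}) matches that of (\ref{oddmin}) and the split into a $(2k+1)$-zero and a $1$-zero can be read off uniformly in $k$. Aside from this bookkeeping, the proposition follows from the methods already developed in this section.
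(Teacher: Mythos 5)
Your proposal follows essentially the same route as the paper's proof, which likewise obtains permutations (\ref{7,1}) and (\ref{2k+1,1}) by attaching a five-vertex gadget (Figure~\ref{(7,1)}) to the right-hand side of the filling pair diagrams for permutations (\ref{odd4}) and (\ref{oddmin}), increasing the order of the existing zero by $3$ and creating a separate zero of order $1$, with minimality coming from the Euler-characteristic count $2g+n-2$. Two trivial quibbles: the vertical permutation of (\ref{7,1}) is the single cycle $(1,2,5,3,4,9,10,7,6,8)$ rather than the cycle you wrote (still a $10$-cycle, so the conclusion stands), and (\ref{7,1}) is not a literal specialisation of the $k\geq 4$ formula but rather the same gadget attached to the $\odd(4)$ diagram.
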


\begin{proof}
The proof is similar to the proof of Proposition~\ref{2Odd}. Indeed, we construct permutations (\ref{7,1}) and (\ref{2k+1,1}) by adding the 5 vertices in Figure~\ref{(7,1)} to the right-hand sides of the filling pair diagrams representing permutations (\ref{odd4}) and (\ref{oddmin}) of Proposition~\ref{Oddmin}. Here, we increase the order of the associated zero by 3 and add a separate zero of order 1.
\end{proof}

\begin{figure}[H]
\begin{center}
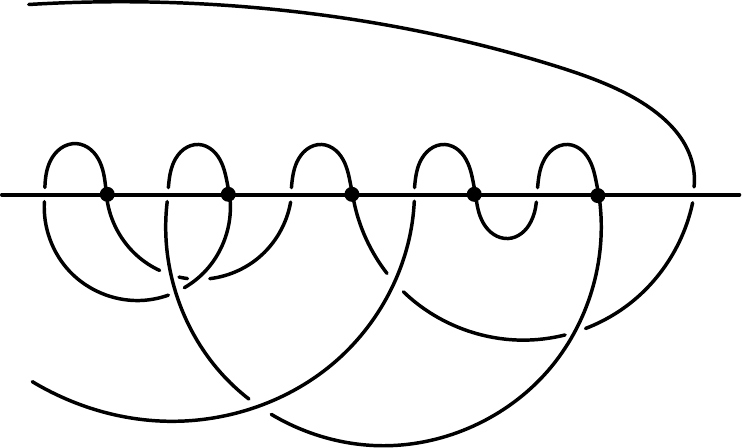
\end{center}
\caption{Filling pair diagram combinatorics used in Proposition~\ref{2g-3,1}.}
\label{(7,1)}
\end{figure}

The strata $\calH(3,1)$ and $\calH(5,1)$ are not covered by this proposition however the permutations
\[\left(\begin{matrix}
0&1&2&3&4&5&6 \\
2&5&1&6&4&3&0
\end{matrix}\right)\text{      and      }
\left(\begin{matrix}
0&1&2&3&4&5&6&7&8 \\
2&4&7&3&1&8&6&5&0
\end{matrix}\right)\]
represent 1,1-square-tiled surfaces in $\calH(3,1)$ and $\calH(5,1)$, respectively. \\

\paragraph*{{\bf Handling the hyperellipticity of $\bs{\calH(1,1)}$}}

As in the previous subsection, the hyperellipticity of genus two again causes us difficulty. In this case, we have no 1,1-square-tiled surface in $\calH(1,1)$ that we can use to build 1,1-square-tiled surfaces with the minimum number of squares. Observe that 1,1-square-tiled surfaces in the strata $\calH(1,1,1,1)$ and $\calH(1,1,1,1,1,1)$  are represented by the permutations
\[\left(\begin{matrix}
0&1&2&3&4&5&6&7&8 \\
2&6&5&3&1&8&4&7&0
\end{matrix}\right)\text{      and      }\left(\begin{matrix}
0&1&2&3&4&5&6&7&8&9&10&11&12 \\
2&8&1&5&11&7&3&10&6&12&9&4&0
\end{matrix}\right),\]
respectively.

We see then that the only strata with odd order zeros in which we cannot construct 1,1-square-tiled surfaces from those we have already constructed above are $\calH(2g-5,1,1,1)$, $g\geq 4$. The permutation
\[\left(\begin{matrix}
0&1&2&3&4&5&6&7&8&9&10 \\
2&10&6&5&1&8&4&7&3&9&0
\end{matrix}\right)\]
represents a 1,1-square-tiled surface in $\calH(3,1,1,1)$. The remaining cases can be constructed by using a method similar to that used to produce the 1,1-square-tiled surfaces in the proofs of Propositions~\ref{2Odd},~\ref{2Even}, and~\ref{2g-3,1}. Indeed, one can check that if we add the vertices shown in Figure~\ref{f:H11} to the right-hand side of the filling pair diagrams associated to the 1,1-square-tiled surfaces in $\calH(2k+1,1)$ constructed above, then we obtain 1,1-square-tiled surfaces in $\calH(2k+3,1,1,1)$. This now completes the construction of 1,1-square-tiled surfaces in all strata with zeros of odd order and hence the work of this subsection is complete.

\begin{figure}[H]
\begin{center}
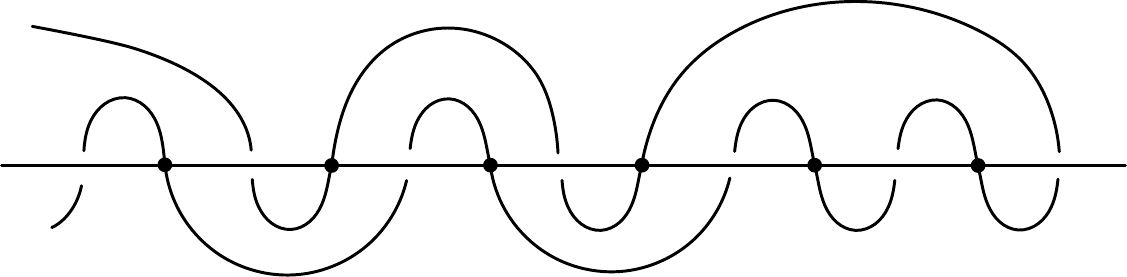
\end{center}
\caption{Filling pair diagram combinatorics for adding $\calH(1,1)$.}
\label{f:H11}
\end{figure}

\paragraph*{{\bf Inefficiency of Rauzy diagram search.}}

As mentioned in the introduction, the permutation representatives given in Proposition~\ref{2g-5} below, the proof of which we omit, are examples demonstrating the complexity of finding 1,1-square-tiled surfaces by using a shortest sequence of Rauzy moves on the permutation representatives given by Zorich. Indeed, these general forms were discovered by such a method and the sequences of Rauzy moves required, and hence the resulting permutation representatives, differed depending on the residue of $2g-5$ modulo 4. As such, we would not expect to be able to find a general method giving a sequence of Rauzy moves for more complicated strata.

\begin{proposition}\label{2g-5}
For $k\geq 1$, the permutation
\begin{equation}\label{4k+3}
\begin{multlined}
\hspace{-0.5cm}\left(\begin{matrix}
0&1&2&3&4&5&6&7&8&\cdots&4k+3 \\
2&4k+10&4k+6&4k+5&1&4&3&6&5&\cdots&4k+2
\end{matrix}\color{white}\right) \\
\\
\hspace{1.2cm}\color{white}\left(\color{black}\begin{matrix}
4k+4&4k+5&4k+6&4k+7&4k+8&4k+9&4k+10 \\
4k+1&4k+8&4k+4&4k+7&4k+3&4k+9&0
\end{matrix}\right)
\end{multlined}
\end{equation}
represents a 1,1-square-tiled surface in the stratum $\calH(4k+3,1,1,1)$. For $k\geq 2$, the permutation
\begin{equation}\label{4k+1}
\begin{multlined}
\hspace{-0.5cm}\left(\begin{matrix}
0&1&2&3&4&5&6&7&8&\cdots&4k-1&4k \\
2&4k+8&4k+5&4k+4&1&4&3&6&5&\cdots&4k-2&4k-3
\end{matrix}\color{white}\right) \\
\\
\hspace{2cm}\color{white}\left(\color{black}\begin{matrix}
4k+1&4k+2&4k+3&4k+4&4k+5&4k+6&4k+7&4k+8 \\
4k+3&4k-1&4k+2&4k+6&4k+1&4k&4k+7&0
\end{matrix}\right)
\end{multlined}
\end{equation}
represents a 1,1-square-tiled surface in $\calH(4k+1,1,1,1)$.
All such surfaces achieve the minimal number of squares for their respective strata.
\end{proposition}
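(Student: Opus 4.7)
The plan is to verify, for each of the two permutations, three properties: (i) it represents a 1,1-square-tiled surface, (ii) the underlying Abelian differential lies in the claimed stratum, and (iii) the number of squares is minimal. Property (iii) is immediate from the Euler-characteristic lower bound of Subsection~\ref{permreps}: for $\calH(4k+3,1,1,1)$ we have genus $g=2k+4$ and $n=4$ zeros, so the bound $2g+n-2=4k+10$ coincides with the number of nonzero symbols in permutation~(\ref{4k+3}), and similarly the bound $4k+8$ for $\calH(4k+1,1,1,1)$ is attained by~(\ref{4k+1}).

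For property (i), that there is a single horizontal cylinder is automatic, since in each permutation the first symbol of the top row agrees with the last symbol of the bottom row (both are $0$); this is exactly the criterion recorded in Subsection~\ref{permreps}. For the single vertical cylinder, following the same subsection, one removes the $0$s from both rows and checks that the resulting permutation on $\{1,\ldots,4k+10\}$ (respectively $\{1,\ldots,4k+8\}$) is a single cycle. The central block between columns $4$ and $4k$, reading $(1,4,3,6,5,\ldots,4k-2,4k-3)$ along the bottom, consists of adjacent transpositions whose contribution to the cycle structure can be tracked by induction on $k$, starting from an explicit base case ($k=1$ for~(\ref{4k+3}), $k=2$ for~(\ref{4k+1})).

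Property (ii) is the substance. I would use the filling pair diagram formalism of Subsection~\ref{fpd}, in which the orders of the zeros are encoded by the side counts of the boundary components of the associated ribbon graph, a zero of order $j$ corresponding to a boundary component with $4(j+1)$ sides. The goal is then to show that the ribbon graph has exactly four boundary components, with side counts $(16k+16,8,8,8)$ in the first family and $(16k+8,8,8,8)$ in the second. (Note that the total side count equals twice the number of edges, giving the consistency check $4(4k+10)=16k+40$ and $4(4k+8)=16k+32$, which the targeted counts satisfy.) I would verify the base case directly, either by drawing the diagram in the spirit of Figures~\ref{oddmin1}--\ref{oddmin2} or by confirmation in \texttt{surface\_dynamics}, and then show inductively that incrementing $k$ inserts a small block into the middle of the filling pair diagram whose local effect on the ribbon graph is to extend the largest boundary component by $16$ sides while leaving the three eight-sided components undisturbed, in direct analogy with the local merging arguments of Figures~\ref{oddmin3} and~\ref{rightswap1}--\ref{rightswap2}.

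The main obstacle is that the two families genuinely require parallel, yet distinct, inductions: the combinatorics at the left end of the permutation encoding the three zeros of order $1$ depend on the residue of $2g-5$ modulo $4$, which is exactly the obstruction that makes a direct Rauzy-moves argument infeasible. Once the two base cases are pinned down, however, the inductive merging of boundary components in each family is mechanical, and property (ii) follows.
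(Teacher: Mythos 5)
The paper states this proposition with ``the proof of which we omit,'' so there is no in-text argument to compare yours against; I can only assess the plan on its own terms. Your strategy --- the Euler-characteristic count for minimality, the first-symbol/last-symbol criterion for the single horizontal cylinder, the single-cycle test on the $0$-removed permutation for the vertical cylinder, and ribbon-graph boundary components for the stratum --- is precisely the toolkit the paper deploys for the analogous Propositions~\ref{Oddmin}, \ref{2Odd} and~\ref{RS}, and your numerology is all correct: for (\ref{4k+3}) one has $g=2k+4$, minimum $2g+n-2=4k+10$ squares, and target boundary profile $(16k+16,8,8,8)$; for (\ref{4k+1}) one has $g=2k+3$, $4k+8$ squares, and $(16k+8,8,8,8)$. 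The base case $k=1$ of (\ref{4k+3}) does check out by direct computation: the $0$-removed permutation is a single $14$-cycle, and the commutator $[h,v]$ is the product of the $8$-cycle $(1,6,8,10,13,9,7,2)$ with the three $2$-cycles $(3,5)$, $(4,12)$, $(11,14)$, confirming a $1{,}1$-square-tiled surface in $\calH(7,1,1,1)$ with $14$ squares.

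The gap is that the inductive step, which is the actual content, is asserted rather than carried out, and it is less local than you suggest. Passing from $k$ to $k+1$ does splice a four-vertex adjacent-transposition block into the middle of the filling pair diagram, but simultaneously the values at positions $1,2,3$ (namely $4k+10$, $4k+6$, $4k+5$) and the entire seven-entry terminal block shift by $4$, so the vertical edges issuing from vertices $2$ and $3$ and those in the tail are all re-targeted; only after the induced relabelling does the change become a local insertion. One must then verify both that the $0$-removed permutation remains a single cycle and that all sixteen new boundary sides are absorbed into the one large boundary component while none of the three octagonal components is rerouted through the inserted block --- a finite check in the style of Figures~\ref{oddmin3} and~\ref{rightswap1}--\ref{rightswap2}, but exactly the check that distinguishes the two families and that your write-up defers. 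Nothing here is likely to fail, so I would classify this as an incomplete execution of a correct plan rather than a wrong approach; to make it a proof you need to exhibit the local before-and-after boundary-component pictures for each family, as the paper does in its right-swap and left-swap arguments.
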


\subsection{General strata}\label{Gen}

In this subsection, we complete the proof of Theorem~\ref{STS} by constructing 1,1-square-tiled surfaces in general strata; that is, we construct 1,1-square-tiled surfaces in strata with both even and odd order zeros. Recall that such strata are connected.

One can check that the only strata in which we are not already able to construct 1,1-square-tiled surfaces are those whose only even order zero is a zero of order 2, and those whose only odd order zeros are a pair of zeros of order 1.

In the former case, if we can construct 1,1-square-tiled surfaces in all strata with two odd order zeros and a zero of order 2 then we can use the surfaces we constructed in the previous subsection to complete this case. We have one exception, that being $\calH(2,1,1,1,1)$ however a 1,1-square-tiled surface in this stratum is represented by the permutation
\[\left(\begin{matrix}
0&1&2&3&4&5&6&7&8&9&10&11 \\
2&7&11&6&3&9&5&1&8&4&10&0
\end{matrix}\right).\]
We will revisit the technique we used in Propositions~\ref{2Odd} and~\ref{2Even}. That is, the addition of the 5 vertices in Figure~\ref{f:H2} to the right-hand side of the filling pair diagram to produce the zero of order 2. However, we must proceed with more care than we did in the proof of Proposition~\ref{2Odd} as one can check that when we add these vertices to the filling pair diagram of a surface with two zeros it is only the zero corresponding to the boundary component of the ribbon graph that `leaves' the filling pair diagram on the right below the horizontal line (see Figure~\ref{addbot1}) that has its order increased. This can easily be seen by observing the combinatorics of the filling pair diagrams. 

\begin{figure}[H]
\begin{center}
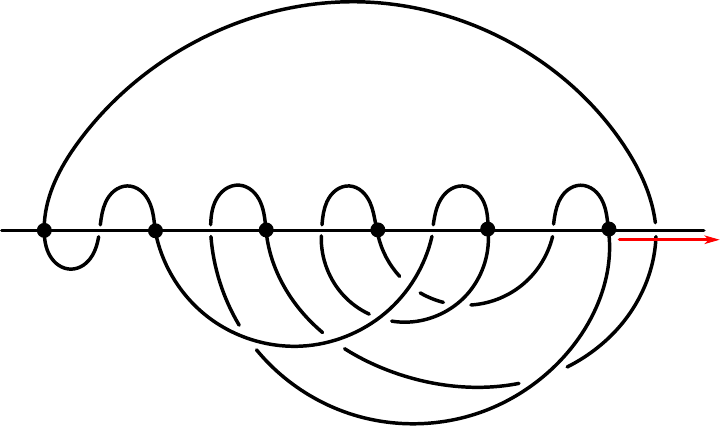
\end{center}
\caption[The zero of order 3 leaves the filling pair diagram at the bottom.]{The boundary component corresponding to the zero of order 3 leaves the filling pair diagram at the bottom.}
\label{addbot1}
\end{figure}

With this in mind, we need only keep track of which zero is associated to the boundary component that leaves on the bottom for the surfaces that we constructed in the previous subsection. It is easy to check that for the 1,1-square-tiled surfaces in the strata $\calH(2k+1,2k+1+2n)$, for $k\geq 1$ and $n\geq 2$, constructed in Proposition~\ref{LS}, that the boundary component that leaves on the bottom is the one associated to the zero of order $2k+1+2n$. Hence, we can construct 1,1-square-tiled surfaces in the strata $\calH(2k+1,2k+1+2n,2)$, for $k\geq 1$ and $n\geq 3$. Using the 1,1-square-tiled surfaces in the strata $\calH(2k+1,2k+1)$, $k\geq 1$, constructed in (and after for $\calH(3,3)$) Proposition~\ref{RS}, we can construct 1,1-square-tiled surfaces in the strata $\calH(2k+3,2k+1,2)$, $k\geq 1$. Moreover, the 1,1-square-tiled surfaces in the strata $\calH(2k+1,1)$, $k\geq 1$, constructed in and after Proposition~\ref{2g-3,1}, have the boundary component that leaves on the bottom being the one associated to the zero of order $2k+1$, and so we can construct 1,1-square-tiled surfaces in the strata $\calH(2k+3,1,2)$, $k\geq 1$. Finally, we observe that for the 1,1-square-tiled surfaces in the strata $\calH(2k+3,2k+1)$, $k\geq 1$, constructed in Proposition~\ref{RS}, the boundary component that leaves on the bottom is the one associated to the zero of order $2k+3$. Hence, we can construct 1,1-square-tiled surfaces in the strata $\calH(2k+5,2k+1,2)$, $k\geq 1$.

The only strata not covered thus far are $\calH(3,1,2)$ and $\calH(2k+1,2k+1,2)$, $k\geq 1$. The permutation
\[\left(\begin{matrix}
0&1&2&3&4&5&6&7&8&9 \\
2&6&8&3&7&4&1&9&5&0
\end{matrix}\right)\]
represents a 1,1-square-tiled surface in the stratum $\calH(3,1,2)$. Hence, we see that to complete the proof of Theorem~\ref{STS} we must construct 1,1-square-tiled surfaces in the strata $\calH(2k+3,2k+1)$, $k\geq 0$, with the boundary component that leaves on the bottom being the one associated to the zero of order $2k+1$. Indeed, we would then be able to construct 1,1-square-tiled surfaces in the strata $\calH(2k+3,2k+3,2)$, $k\geq 0$. This is completed by the following proposition and the permutation representative that follows.

\begin{proposition}\label{2k+3,2k+1bot}
The permutations
\begin{equation}\label{bot1}
\left(\begin{matrix}
0&1&2&3&4&5&6&7&8&9&10 \\
2&6&4&10&8&3&1&9&7&5&0
\end{matrix}\right),
\end{equation}
and, for $k\geq 2$,
\begin{equation}\label{bot2}
\begin{multlined}
\left(\begin{matrix}
0&1&2&3&4&5&6&7&8&9&10&11 \\
2&6&4&10&8&3&12&9&7&5&14&11
\end{matrix}\color{white}\right) \\
\hspace{2.5cm}\color{white}\left(\color{black}\begin{matrix}
12&13&14&\cdots&4k+3&4k+4&4k+5&4k+6 \\
16&13&18&\cdots&4k+3&1&4k+5&0
\end{matrix}\right)
\end{multlined}
\end{equation}
represent 1,1-square-tiled surfaces in $\calH(5,3)$ and $\calH(2k+3,2k+1)$, respectively, with the boundary component that leaves the filling pair diagram on the bottom being associated to the zeros of order 3 and $2k+1$, respectively. Moreover, these surfaces have the minimum number of squares necessary for their respective strata.
\end{proposition}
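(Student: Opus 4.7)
The plan is to mirror the inductive approach used in Proposition~\ref{Oddmin}, but with more careful bookkeeping because we must distinguish between the two boundary components of the ribbon graph and identify which one exits the filling pair diagram on the bottom. The minimality of the square count will be automatic once we establish the stratum, since each permutation uses exactly $4k+6$ non-zero symbols and $\sum (k_i+1) = (2k+4)+(2k+2) = 4k+6$ is the minimum for $\calH(2k+3,2k+1)$.

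First I would verify permutation (\ref{bot1}) directly as the base case. Draw the filling pair diagram representing it, take a regular neighbourhood to obtain the ribbon graph, and trace its boundary components. One component should have $24$ sides (corresponding to a zero of order $5$) and the other should have $16$ sides (corresponding to a zero of order $3$). The second, the one of order $3$, should be the component that exits the diagram on the bottom to the right. The vertical cylinder structure is manifest: removing the $0$s from (\ref{bot1}) gives a single cycle $(1,2,6,3,4,10,5,8,7,9)$, so there is one vertical cylinder, and there is clearly a single horizontal cylinder.

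For permutation (\ref{bot2}), I would proceed by induction on $k \geq 2$, starting from (\ref{bot1}) as the inductive foundation. The permutation (\ref{bot2}) should be viewed as the filling pair diagram for (\ref{bot1}) with $4(k-1)$ additional vertices inserted in a controlled block, and the resulting local combinatorics should be the same repeated vertex-transposition pattern used in the proof of Proposition~\ref{Oddmin}. Concretely, I would show that passing from the configuration for $\calH(2(k-1)+3,2(k-1)+1)$ to $\calH(2k+3,2k+1)$ adds four vertices, introduces two additional $4$-sided boundary components, and then a pair of vertex transpositions absorbs one new block of $8$ sides into the existing boundary component of order $2k+1$ and another block of $8$ sides into the boundary component of order $2k+3$. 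This keeps both boundary components alive, increases each order by one, and, crucially, does not alter which component exits on the bottom.

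The main obstacle will be this last point: tracking the identity of the bottom-exiting boundary component through the induction. I would handle this by setting up an explicit labelling of the two boundary components of the ribbon graph in the style of Figures~\ref{rightswap1} and~\ref{rightswap2}, read off the local configuration around the added vertices, and check that the vertex transpositions preserve which labelled component threads through the rightmost edge below the horizontal spine. Once this is established for the inductive step, combined with the base case check showing the order-$3$ component exits on the bottom, the bottom-exit property is maintained for every $k \geq 1$, completing the proof. Minimality then follows immediately from the Euler-characteristic bound recalled at the start of this section, as each permutation is constructed on exactly the minimum number of squares.
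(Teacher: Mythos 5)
Your plan is correct and is essentially the paper's approach: the paper's own proof of this proposition simply asserts that the verification is routine, and your base-case-plus-vertex-transposition induction on the filling pair diagram is exactly the bookkeeping used for the analogous Proposition~\ref{Oddmin}, with the right side counts ($24$ and $16$ sides for the two boundary components in the base case, and $8$ new sides absorbed into each component at every inductive step, $16$ in total from the four added vertices). Two harmless slips worth fixing: deleting the $0$s from (\ref{bot1}) gives the single cycle $(1,2,6,3,4,10,5,8,9,7)$ rather than $(1,2,6,3,4,10,5,8,7,9)$ (still a $10$-cycle, so the one-vertical-cylinder conclusion stands), and absorbing $8$ sides increases each zero's order by two, not one, which is what is actually needed to pass from $\calH(2k+1,2k-1)$ to $\calH(2k+3,2k+1)$.
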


\begin{proof}
It is easy to check that these permutation representatives do indeed represent 1,1-square-tiled surfaces in the claimed strata. Further, it is simple to check that the associated filling pair diagrams have the claimed combinatorics.
\end{proof}

A 1,1-square-tiled surface in the stratum $\calH(3,1)$ with the boundary component that leaves on the bottom being the one associated to the zero of order 1 is represented by the permutation
\[\left(\begin{matrix}
0&1&2&3&4&5&6 \\
2&6&5&1&4&3&0
\end{matrix}\right).\]

We now construct 1,1-square-tiled surfaces in the strata $\calH(2k,1,1)$, $k\geq 1$. We can then use surfaces we have already constructed to complete the remaining cases apart from the stratum $\calH(2,2,1,1)$ which is represented by the permutation
\[\left(\begin{matrix}
0&1&2&3&4&5&6&7&8&9&10 \\
2&4&9&7&3&8&5&1&10&6&0
\end{matrix}\right).\]
We note that the permutations
\[\left(\begin{matrix}
0&1&2&3&4&5&6&7 \\
2&6&4&1&7&5&3&0
\end{matrix}\right),\]
and
\[\left(\begin{matrix}
0&1&2&3&4&5&6&7&8&9 \\
2&7&4&1&9&5&8&6&3&0
\end{matrix}\right)\]
represent 1,1-square-tiled surfaces in $\calH(2,1,1)$ and $\calH(4,1,1)$, respectively. We can then construct 1,1-square-tiled surfaces in $\calH(2k,1,1)$, for $k\geq 3$, by adding the combinatorics of Figure~\ref{f:H11} to the filling pair diagrams associated to the 1,1-square-tiled surfaces in $\odd(2k)$ that we produced in Proposition~\ref{Oddmin}.

This completes the proof of Theorem~\ref{STS}. \\

\paragraph*{{\bf Inefficiency of Rauzy diagram search revisited.}}

Further to Proposition~\ref{2g-5}, the following proposition is a further example of the inefficiency of a Rauzy diagram search to find 1,1-square-tiled surfaces. In this case, which is less complicated than that considered in Proposition~\ref{2g-5}, we again find differing sequences of Rauzy moves, and hence differing permutation representatives, depending on the residue of the even order modulo 4. We again omit the proof.

\begin{proposition}
For $k\geq 1$, the permutation
\begin{equation}\label{4k+2,1,1}
\begin{multlined}
\hspace{-1cm}\left(\begin{matrix}
0&1&2&3&4&5&6&7&\cdots&4k+2 \\
2&6&4&1&8&7&10&9&\cdots&4k+6
\end{matrix}\color{white}\right) \\
\\
\hspace{1.5cm}\color{white}\left(\color{black}\begin{matrix}
4k+3&4k+4&4k+5&4k+6&4k+7 \\
4k+5&4k+7&5&3&0
\end{matrix}\right)
\end{multlined}
\end{equation}
represents a 1,1-square-tiled surface in $\calH(4k+2,1,1)$. For $k\geq 2$, the permutation
\begin{equation}\label{4k,1,1}
\begin{multlined}
\hspace{-1cm}\left(\begin{matrix}
0&1&2&3&4&5&6&7&\cdots&4k-2&4k-1 \\
2&7&4&1&9&8&11&10&\cdots&4k+3&4k+2
\end{matrix}\color{white}\right) \\
\\
\hspace{3cm}\color{white}\left(\color{black}\begin{matrix}
4k&4k+1&4k+2&4k+3&4k+4&4k+5 \\
4k+5&5&4k+4&6&3&0
\end{matrix}\right)
\end{multlined}
\end{equation}
represents a 1,1-square-tiled surfaces in the stratum $\calH(4k,1,1)$. Moreover, these surfaces have the minimum number of squares necessary for their respective strata.
\end{proposition}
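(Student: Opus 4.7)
The plan is to verify three conditions for each of the two families: (i) the permutation represents a 1,1-square-tiled surface; (ii) the surface lies in the claimed stratum; and (iii) the number of squares is minimal. The approach mirrors that of Proposition~\ref{2g-5}, so I would work directly on the permutation representatives and on the filling pair diagrams of Subsection~\ref{fpd} rather than invoking the combination lemmas.

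First, for both families the top row begins with $0$ and the bottom row ends with $0$, so by the discussion in Subsection~\ref{permreps} each permutation represents a square-tiled surface with a single horizontal cylinder of height one. To check that the vertical cylinder is also a single height-one cylinder, I would remove the $0$'s and verify that the resulting $v = \overline{\pi}^{-1}$ is an $n$-cycle. The middle blocks of (\ref{4k+2,1,1}) and (\ref{4k,1,1}) have a very regular recursive shape: incrementing $k$ by one amounts to inserting a fixed four-symbol block into the body of $v$. An induction on $k$, with the base cases $k=1$ (respectively $k=2$) checked by hand, shows that this insertion splices the new block into the existing cycle without splitting it, so $v$ remains a single $n$-cycle.

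Second, I would determine the stratum by computing the cycle type of the commutator $[h,v]$ with $h=(1,2,\ldots,n)$, and verify that it decomposes as a $(4k+3)$-cycle together with two transpositions in the first family, and a $(4k+1)$-cycle together with two transpositions in the second. Equivalently, using the ribbon-graph picture of Subsection~\ref{fpd}, I would trace the boundary components of the filling pair diagram and show that it has three boundary components with the required numbers of sides. The base cases are checked by direct computation. For the inductive step, the standard four-vertex block added when $k$ is incremented contributes locally to a single boundary component; by the same boundary-tracing argument used in Figures~\ref{oddmin3} and~\ref{rightswap2}, this boundary component is the long one, while the two boundary components of length $4$ corresponding to the two order-$1$ zeros are preserved.

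Finally, minimality follows from the Euler-characteristic bound $s\geq 2g+n-2$: for $\calH(4k+2,1,1)$ we have $g=2k+3$ and $n=3$, giving $s\geq 4k+7$, which matches the length of permutation (\ref{4k+2,1,1}); for $\calH(4k,1,1)$ we have $g=2k+2$ and $n=3$, giving $s\geq 4k+5$, which matches the length of (\ref{4k,1,1}). The main obstacle is the bookkeeping in the inductive step: because the explicit form of the permutations depends on the residue of the even order modulo $4$, the two families must be handled separately, and in each case one must verify that the local splicing does not cause the long cycle of $[h,v]$ to merge with one of the two transpositions. Once this local invariant is established, the proposition follows by inspection.
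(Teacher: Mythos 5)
Your outline is correct: the arithmetic for the minimality bound checks out ($g=2k+3$, resp.\ $g=2k+2$, giving $4k+7$, resp.\ $4k+5$, squares, matching the lengths of the two permutations), and the strategy of first confirming the single horizontal and vertical cylinders and then determining the stratum from the cycle type of $[h,v]$ (equivalently, from the boundary components of the ribbon graph), with an induction on the four-symbol block inserted when $k$ increases, is exactly the technique the paper employs for the analogous Propositions~\ref{Oddmin},~\ref{2Odd} and~\ref{2g-5}. Note, however, that the paper explicitly omits the proof of this particular proposition (the representatives were found by a Rauzy-diagram search), so there is no written argument to compare against; your proposal supplies a sound one, with the only remaining work being the routine verification, separately for each of the two families, that the inserted block splices into the long cycle of $v$ and into the long boundary component of $[h,v]$ without disturbing the two transpositions.
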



\section{Ratio-optimising pseudo-Anosovs}\label{RO1}

This section contains the proof of Theorem~\ref{RO}.

\subsection{\teichmuller preliminaries} Recall that the {\em \teichmuller space}, $\T(S)$, of a closed surface $S$ of genus $g\geq 2$ is the set of equivalence classes of pairs $(X,f)$ where $X$ is a Riemann surface of genus $g$ and $f:S\to X$, called a marking, is a homeomorphism. Two such pairs $(X,f)$ and $(Y,g)$ are equivalent if there exists a conformal map $h:X\to Y$ such that $h\circ f$ is isotopic to $g$. We will abuse notation and denote $[(X,f)]$ by $X$. By the uniformisation theorem, a point $X\in\T(S)$ determines a hyperbolic metric on $S$ up to isometries isotopic to the identity. As such, given an isotopy class $[\alpha]$ of an essential simple closed curve $\alpha$ on the surface $S$ we can talk about its length in the hyperbolic metric determined by the point $X$. The \teichmuller space carries a metric $d_{\T}$, called the {\em \teichmuller metric} and from now on we will denote by $\T(S)$ the metric space $(\T(S),d_{\T})$. Given a pseudo-Anosov homeomorphism $f$, we define the {\em translation length} of $f$ on $\T(S)$ to be $\ell_{\T}(f):=\log(\lambda_{f})$, where $\lambda_{f}$ is the dilatation of $f$.

Given an Abelian differential on the surface $S$, the translation structure induces a complex structure on $S$ which, taking the identity homeomorphism as the marking, determines a point in \teichmuller space. The group $\SL(2,\R)$ acts on the space of Abelian differentials by its natural action on the polygons in $\Cbb$ given by the translation structure on $S$. The action of $\SO(2,\R)$ on an Abelian differential does not change the point in $\T(S)$ that it determines. As such, the orbit of an Abelian differential under the action of $\SO(2,\R)\!\setminus\!\SL(2,\R)$ gives an embedding of $\SO(2,\R)\!\setminus\!\SL(2,\R)\cong\Hbb$ into $\T(S)$. The image of this embedding is called the {\em \teichmuller disk} of the Abelian differential.

The {\em curve graph}, $\C(S)$, of the surface $S$ is the 1-skeleton of the curve complex introduced by Harvey \cite{H}. The vertices are isotopy classes of essential simple closed curves on the surface $S$, with two vertices joined by an edge if and only if they can be realised disjointly on $S$. We will abuse notation and denote $[\alpha]\in\C(S)$ by $\alpha$. Assigning length 1 to each edge, we equip $\C(S)$ with the associated path metric $d_{\C}$. We will denote by $\C(S)$ the metric space $(\C(S),d_{\C})$. Given a pseudo-Anosov homeomorphism $f$, we define the {\em asymptotic translation length} of $f$ on $\C(S)$ to be
\[\ell_{\C}(f):=\liminf_{n\to \infty}\frac{d_{\C}(f^{n}(\alpha),\alpha)}{n},\]
for any $\alpha\in\C(S)$. We claim that this is well-defined. Indeed, by work of Bowditch~\cite{Bow} and Masur-Minsky~\cite{MM}, for a pseudo-Anosov homeomorphism this limit inferior is in fact a strictly positive limit. The independence of the choice of $\alpha$ then follows by taking the appropriate limits on the inequalities
\[d_{\C}(f^{n}(\alpha),\alpha)-2d_{\C}(\alpha,\beta)\leq d_{\C}(f^{n}(\beta),\beta)\leq d_{\C}(f^{n}(\alpha),\alpha)+2d_{\C}(\alpha,\beta),\]
which are obtained from the triangle inequality and the fact that the mapping class group acts by isometries on the curve graph.

\subsection{The systole map and Lipschitz constant} We now define the {\em systole map}, $\sys: \T(S)\to \C(S)$, to be the coarsely-defined map that sends a point $X\in\T(S)$ to the isotopy class of the curve with shortest length in the hyperbolic metric determined by $X$, known as the systole. The map is only coarsely-defined as there can be multiple systoles on a surface, however the set of systoles on $X$ is a set of diameter at most 2 in $\C(S)$. Indeed, on a compact surface one has that $i(\alpha,\beta)\leq 1$ for any two systoles $\alpha$ and $\beta$. So in particular the curves $\alpha$ and $\beta$ cannot form a filling pair from which it follows that $d_{\C}(\alpha,\beta)\leq 2$. We will abuse notation and think of $\sys$ as a well-defined map. The study of this map played a key role in the work of Masur and Minsky in which they proved that the curve complex is $\delta$-hyperbolic \cite{MM}. They showed in particular that the map is coarsely $K$-Lipschitz. That is, there exists a $C\geq 0$ such that
\[d_{\C}(\sys(X),\sys(Y))\leq K\cdot d_{\T}(X,Y)+C,\]
for all $X,Y\in\T(S)$.

It is natural to ask what is the optimum Lipschitz constant, $\kappa_{g}$, defined by
\[\kappa_{g}:=\inf\{K>0\,|\,\exists\,C\geq 0\,\text{such that sys is coarsely K-Lipschitz}\},\]
and Gadre-Hironaka-Kent-Leininger determined that the ratio of $\kappa_{g}$ to $1/\log(g)$ is bounded from above and below by two positive constants {\cite[Theorem 1.1]{GHKL}}. In such a case, we use the notation $\kappa_{g}\asymp 1/\log(g)$, and say that $\kappa_{g}$ is comparable to $1/\log(g)$. To find an upper bound for $\kappa_{g}$, Gadre-Hironaka-Kent-Leininger gave a careful version of the proof of Masur-Minsky that $\sys$ is coarsely Lipschitz. They then constructed pseudo-Anosov homeomorphisms for which the ratio $\ell_{\C}(f)/\ell_{\T}(f)\asymp 1/\log(g)$, where $\ell_{\C}(f)$ and $\ell_{\T}(f)$ are the asymptotic translation lengths of $f$ in $\C(S)$ and $\T(S)$, respectively. They then obtained a lower bound for $\kappa_{g}$ by noting that, for any pseudo-Anosov homeomorphism $f$, we have
\[\kappa_{g}\geq\frac{\ell_{\C}(f)}{\ell_{\T}(f)}.\]

\subsection{Constructing ratio-optimising pseudo-Anosov homeomorphisms} Recall that a pair of essential simple closed curves which are in minimal position on a surface $S$ are said to be a {\em filling pair} if the complement of their union is a disjoint collection of disks. Using a Thurston construction on filling pairs, Aougab-Taylor constructed a larger family of pseudo-Anosov homeomorphisms for which $\tau(f) := \ell_{\T}(f)/\ell_{\C}(f)$ was bounded above by a function $F(g)\asymp\log(g)$ {\cite[Theorem 1.1]{AT2}}. Such homeomorphisms are said to be {\em ratio-optimising}. Moreover, they proved that there exists a \teichmuller disk $\D\simeq\Hbb\subset\T(S)$ such that there exist infinitely many conjugacy classes of primitive ratio-optimising pseudo-Anosovs $f$ with the invariant axis of $f$ being contained in $\D$.

To construct these pseudo-Anosovs, Aougab-Taylor began with a pair of simple closed curves $\alpha$ and $\beta$ that filled the surface $S$. They then took high powers, independent of the genus of the surface, of the Dehn twists about each curve and showed that the Bass-Serre tree of the free group generated by these elements quasi-isometrically embeds in $\C(S)$. This then allowed them to bound the asymptotic translation length, $\ell_{\C}$, of elements of this group in terms of their syllable length. They also bounded $\ell_{\T}$ in terms of the syllable length of the element and the geometric intersection number, $i(\alpha,\beta)$, of the filling pair. From this they were able to deduce that, for pseudo-Anosov elements of this free group,
\[\tau(f)\leq\log(D\cdot i(\alpha,\beta)),\]
where $D$ is a constant independent of the genus of $S$. Ratio-optimising pseudo-Anosovs were then constructed by using filling pairs for which $i(\alpha,\beta)\asymp g$.

Recall that a filling pair $(\alpha,\beta)$ on a surface $S$, with all intersections occurring with the same orientation, determines an Abelian differential on that surface. We will denote the \teichmuller disk of this Abelian differential by $\D(\alpha,\beta)$. The ratio-optimising pseudo-Anosovs produced from this filling pair will stabilise $\D(\alpha,\beta)$ and, moreover, their invariant axis will be contained in $\D(\alpha,\beta)$. Aougab-Taylor used the hyperbolicity of $\C(S)$ and the acylindricity of the action of $\Mod(S)$ on $\C(S)$ to show that in fact there are infinitely many conjugacy classes of primitive ratio-optimising pseudo-Anosovs constructed from this filling pair that have this property. We remark that their theorem deals with the general case of filling pairs that determine quadratic differentials on a punctured surface $S_{g,p}$. We are specialising to the case of Abelian differentials on closed surfaces.

\subsection{Proof of Theorem~\ref{RO}} Fix $g$ and let $\mathscr{C}$ be any connected component of any stratum of $\calH$. By Theorem~\ref{STS}, we can find a 1,1-square-tiled surface in $\mathscr{C}$. The core curves, $\alpha$ and $\beta$, of the vertical and horizontal cylinders of this surface form a filling pair and so we can construct pseudo-Anosovs from this filling pair using the above technique of Aougab-Taylor. For any such pseudo-Anosov, we have
\[\tau(f)\leq\log(D\cdot i(\alpha,\beta))\leq \log(D\cdot (4g-2))\asymp \log(g),\]
since the greatest number of squares required for a 1,1-square-tiled surface of genus $g$, and so the greatest intersection number of the associated filling pair, is given by the connected component $\hyp(g-1,g-1)$ which requires $4g-2$ squares. Hence we have that the pseudo-Anosovs are ratio-optimising. Moreover, as above, we have infinitely many conjugacy classes of primitive ratio-optimising pseudo-Anosovs having their invariant axis contained in the \teichmuller disk determined by this 1,1-square-tiled surface. As such, we have completed the proof of Theorem~\ref{RO}.

Note that this extends the abundance result of Aougab-Taylor. That is, not only are there infinitely many conjugacy classes of primitive ratio-optimising pseudo-Anosovs in a \teichmuller disk of $\T(S)$ but this \teichmuller disk can be taken to be the \teichmuller disk of an Abelian differential from any connected component of any stratum of $\calH$.


\section{Filling pairs on punctured surfaces}\label{FP1}

Here we will prove Theorem~\ref{FP}.

Let $S_{g,p}$ denote the surface of genus $g\geq{0}$ with $p\geq{0}$ punctures. We define $i_{g,p}$ to be the minimal geometric intersection number for a filling pair on $S_{g,p}$. The values of $i_{g,p}$ were determined in the works of Aougab-Huang \cite{AH}, Aougab-Taylor \cite{AT1}, and the author \cite{J}, and can be summarised as follows.

\begin{theorem}
The values of $i_{g,p}$ are the following:
\begin{itemize}
\item[(1)] If $g\neq 2,0$ and $p=0$, then $i_{g,p}=2g-1$; 
\item[(2)] If $g\neq 2,0$ and $p\geq1$, then $i_{g,p}=2g+p-2$; 
\item[(3)] If $g=0$ and $p\geq4$, then $i_{g,p}=p-2$ if $p$ is even, and $i_{g,p}=p-1$ if $p$ is odd; 
\item[(4)] If $g=2$ and $p\leq 2$, then $i_{g,p}=4$; 
\item[(5)] If $g=2$ and $p\geq 2$, then $i_{g,p}=2g+p-2$.
\end{itemize}
\end{theorem}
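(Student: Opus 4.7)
The plan is to establish the explicit values of $i_{g,p}$ by combining Euler-characteristic lower bounds (supplemented by parity and hyperelliptic arguments where the Euler bound fails to be sharp) with matching upper bounds from explicit constructions. The theorem is a compilation of results across \cite{AH}, \cite{AT1}, and \cite{J}, but a uniform proof runs as follows.

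First I would set up the polygonal decomposition induced by a filling pair $(\alpha, \beta)$ in minimal position on $S_{g,p}$ with $n = i(\alpha, \beta)$. Viewed on the closed surface $S_g$, this is a CW structure with $V = n$, $E = 2n$, and hence $F = n + 2 - 2g$ faces, each a topological disk in $S_g$. For the pair to fill $S_{g,p}$, each face must contain at most one puncture (two punctures in one face yield a peripheral essential curve on $S_{g,p}$ disjoint from $\alpha \cup \beta$); so writing $F_0$ for the unpunctured faces gives $F_0 = n + 2 - 2g - p$. The inequality $F_0 \geq 0$ yields $n \geq 2g + p - 2$, which is sharp for case (2) and the even-$p$ part of (3). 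For $p = 0$ one instead requires $F_0 \geq 1$ (otherwise $\alpha \cup \beta$ is empty), giving $n \geq 2g - 1$ and settling case (1). For the odd-$p$ part of (3), I would add the parity observation that on $S^2$ the curve $\alpha$ separates the sphere into two disks and each transverse crossing toggles which disk $\beta$ is in; since $\beta$ is a closed loop, $n$ must be even, upgrading $n \geq p - 2$ to $n \geq p - 1$.

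Cases (4) and (5) are the subtlest. For $g = 2$, $p \leq 2$, the Euler bound gives only $n \geq 3$ and must be sharpened to $n \geq 4$. Here I would invoke hyperellipticity in the spirit of Proposition~\ref{hypmin}: project the putative filling pair to the quotient sphere under the hyperelliptic involution (branched over six points) and perform a combinatorial analysis of arrangements of two arcs on $S^2$ meeting the lifted-face constraints, ruling out $n = 3$. For $g = 2$, $p \geq 2$, the Euler bound $n \geq p + 2$ is attained and no hyperelliptic obstruction arises. For the upper bounds I would exhibit explicit pairs: the $g \neq 2, 0$ closed-surface constructions are due to Aougab-Huang; punctured-surface examples come from these by introducing $p - 1$ additional transverse intersections (each splitting a face) and then placing one puncture into each of the newly created unpunctured faces, so the total count becomes $(2g - 1) + (p - 1) = 2g + p - 2$; the sphere constructions are direct; and the genus $2$ realisations are built by hand.

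The principal obstacle is the genus $2$ case. The Euler bound is not sharp there, and ruling out $n = 3$ requires the hyperelliptic combinatorics on the quotient sphere, parallel in spirit to the hyperelliptic obstruction used for square-tiled surfaces in Proposition~\ref{hypmin}. The sphere parity argument for odd $p$ is the other nontrivial piece, though it becomes transparent once the separating picture is in place; the remaining constructions and the bulk of the cases are routine.
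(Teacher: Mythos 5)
The paper does not actually prove this theorem: it is stated as a summary of results established in \cite{AH}, \cite{AT1}, and \cite{J}, and the only argument in the paper that touches this circle of ideas is Proposition~\ref{hypmin}, which formalises Margalit's hyperelliptic-projection argument in the special setting of 1,1-square-tiled surfaces. So any proof you give is necessarily ``different from the paper's,'' and yours is a faithful reconstruction of the standard arguments from the cited literature: the Euler-characteristic count $V=n$, $E=2n$, $F=n+2-2g$ with at most one puncture per face gives $n\geq 2g+p-2$ (and $n\geq 2g-1$ when $p=0$); the separating/parity argument on the sphere forces $n$ even and upgrades the odd-$p$ bound; and the genus-two obstruction is exactly the hyperelliptic quotient argument, which your face count on $S_{0,6}$ does rule out for $n=3$. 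Two points deserve more care than your sketch gives them. First, for a general filling pair on $S_2$ (not one arising from an Abelian differential, as in Proposition~\ref{hypmin}) you need the genus-two-specific fact that the hyperelliptic involution is central in $\Mod(S_2)$ and preserves every isotopy class of simple closed curve, so that both curves can be simultaneously symmetrised before projecting; for $n=3$ both curves are nonseparating (a separating curve would force $i(\alpha,\beta)$ even), so they do project to arcs. Second, your upper-bound step of ``introducing $p-1$ additional transverse intersections'' cannot be an isotopy, since minimal-position intersection number is an isotopy invariant and perturbations create intersections in cancelling pairs; one must instead replace $\beta$ by a genuinely different curve with one more essential intersection, which is the explicit constructive content of \cite{AT1} and \cite{J} (and the closed-surface base cases occupy most of \cite{AH}). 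With those caveats your outline is correct and matches the proofs in the sources the paper cites.
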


One can ask, for $g\geq 1$, whether $i_{g,p}$ can be realised as the algebraic intersection number, $\widehat{i}(\alpha,\beta)$, of a filling pair $(\alpha,\beta)$. Aougab-Menasco-Nieland \cite{AMN} answered this question for the case of $i_{g,0}$; that is, for minimally intersecting filling pairs on closed surfaces. Moreover, they were interested in counting the number of mapping class group orbits of such filling pairs. Their method involves algebraically constructing 1,1-square-tiled surfaces with the minimum number of squares in the stratum $\calH(2g-2)$, which they call square-tiled surfaces with connected leaves. The core curves of the cylinders of such surfaces give rise to filling pairs with algebraic intersection number equal to $i_{g,0}$.

For $n\geq{i_{g,p}}$, by a {\em compatible decomposition} of the surface $S_{g,p}$ into $n+2-2g$ many $4k$-gons, we mean a decomposition of the surface into $4k$-gons $P_{1},\ldots,P_{n+2-2g}$ such that, if $P_{i}$ is a $4k_{i}$-gon, then $\sum (k_{i}-1) = 2g-2$.

Observe that a filling pair on the surface $S_{g,p}$ with $\widehat{i}(\alpha,\beta) = i(\alpha,\beta) = n\geq i_{g,p}$ divides the surface into a collection of $n+2-2g$ many $4k$-gons forming a compatible decomposition. Conversely, given an appropriate choice of orientation, the core curves of a 1,1-square-tiled surface with $n$ squares and $n+2-2g$ many zeros, of orders greater than or equal to zero, form a filling pair with $\widehat{i}(\alpha,\beta) = i(\alpha,\beta) = n$ dividing the surface into $n+2-2g$ many $4k$-gons with a zero of order $k-1$ giving rise to a $4k$-gon. These $4k$-gons also form a compatible decomposition.

Note that the square torus can be represented by the permutation
\[\left(\begin{matrix}
0&1 \\
1&0
\end{matrix}\right)\]
and that this permutation can be combined with a 1,1-square-tiled surface of genus $g$, by cylinder concatenation as in Lemma~\ref{comb1}, to produce another 1,1-square-tiled surface of genus $g$. This process will add a zero of order 0 to the surface and one additional square.

For $g\geq3$ and $p\geq 0$, let $n\geq i_{g,p}$ and choose a compatible decomposition of the surface $S_{g,p}$ into $n+2-2g$ many $4k$-gons, as described above. There will be a number, less than or equal to $2g-2$, of these $4k$-gons having $k\geq 2$. Let $k_{1},\ldots,k_{m}$ be the list of these $k$ values. By Theorem~\ref{STS}, we can choose a 1,1-square-tiled surface in the stratum $\calH(k_{1}-1,\ldots,k_{m}-1)$ with $2g+m-2$ squares. Adding $(n+2-2g-m)$ zeros of order 0 to this surface, using the method described in the previous paragraph, and choosing orientations appropriately, we will have a 1,1-square-tiled surface such that the core curves of the cylinders form a filling pair $(\alpha,\beta)$ with $\widehat{i}(\alpha,\beta) = i(\alpha,\beta) = n$ which, after adding $p$ punctures  to distinct complementary regions of the filling pair, gives rise to the specified polygonal decomposition of $S_{g,p}$.

For $g=2$, we cannot add zeros of order 0 as above since the permutation representatives for $\hyp(2)$ and $\hyp(1,1)$ do not have the correct form. However, we can split symbol $3$ in the permutations for these components given by Proposition~\ref{Hyp} to add zeros of order 0, before adding punctures to the complementary regions. In the case $g=1$, we need only combine, as in Lemma~\ref{comb1}, the permutation for the square torus above with itself $n$ times and then add $p$ punctures to the surface in distinct complementary regions of the filling pair. This completes the proof of Theorem~\ref{FP}.


\end{document}